
\documentclass{mcom-l}
\newtheorem{theorem}{Theorem}[section]
\newtheorem{lemma}[theorem]{Lemma}
\newtheorem{corollary}[theorem]{Corollary}

\theoremstyle{definition}

\newtheorem{algorithm}[theorem]{Algorithm}

\theoremstyle{remark}
\newtheorem{remark}[theorem]{Remark}
\numberwithin{equation}{section}

\numberwithin{theorem}{section}
\numberwithin{equation}{section}

\usepackage[all]{xy}
\usepackage{amssymb,amsmath,stmaryrd}
\usepackage{amscd}
\usepackage{braket,amsfonts,amsopn}
\usepackage{bm}
\usepackage{makecell,multirow,diagbox}
\usepackage{mathrsfs}
\usepackage{graphicx,epstopdf}
\usepackage{subfigure}
\usepackage[pdftex,colorlinks]{hyperref}

\DeclareMathOperator{\divg}{div}
\DeclareMathOperator{\Divg}{Div}

\DeclareMathOperator{\curl}{curl}
\DeclareMathOperator{\rot}{rot}
\DeclareMathOperator{\Rot}{Rot}
\DeclareMathOperator{\tr}{tr}
\DeclareMathOperator{\osc}{osc}
\DeclareMathOperator{\Div}{Div}

\newcommand{\ab}[2]{\langle#1,#2\rangle}

\newcommand{\Tl}{\mathcal{T}_\ell}
\newcommand{\Sl}{\mathcal{S}_\ell}

\newcommand{\CT}{\mathcal{T}}
\newcommand{\CE}{\mathcal{E}}
\newcommand{\CS}{\mathcal{S}}

\newcommand{\VB}{\mathbb{V}}

\newcommand{\CI}{\mathcal{I}}

\newcommand{\CC}{\mathcal{C}}

\newcommand{\lr}[1]{\llbracket#1\rrbracket}
\newcommand{\vertiii}[1]{{\left\vert\kern-0.25ex\left\vert\kern-0.25ex\left\vert #1
    \right\vert\kern-0.25ex\right\vert\kern-0.25ex\right\vert}}

\def\yulAuthor{Yuwen Li}
\def\yulShortAuthor{Y. Li}

\def\yulAddress{Department of Mathematics, University of California San Diego,
 La Jolla, California 92093-0112.}
\def\yulcurrAddress{Department of Mathematics, The Pennsylvania State University, University Park, Pennsylvania 16801.}

\def\yulEmail{yuwenli925@gmail.com}


\title{Quasi-optimal adaptive mixed finite element methods for controlling natural norm errors}

\def\shortTitle{Quasi-optimal AMFEM for controlling natural norm errors}

\def\myAMS{65N12, 65N15, 65N30, 65N50, 41A25}

\def\myAbstract{
For a generalized Hodge Laplace equation, we prove the quasi-optimal convergence rate of an adaptive mixed finite element method. This adaptive method can control the error in the natural mixed variational norm {when the space of harmonic forms is trivial}. In particular, we obtain new quasi-optimal adaptive mixed methods for the Hodge Laplace,  Poisson, and Stokes equations. Comparing to existing adaptive mixed methods, the new methods control errors in both variables.
}


\begin{document}


\bibliographystyle{amsplain}
\author[\yulShortAuthor]{\yulAuthor}
\address{\yulAddress}
\curraddr{\yulcurrAddress}
\email{\yulEmail}

\subjclass[2010]{Primary \myAMS}
\date{July 08, 2019}
\begin{abstract}\myAbstract\end{abstract}
\maketitle
\markboth{
\yulShortAuthor }{\shortTitle}


\section{Introduction}\label{sec1}
Adaptive finite element method (AFEM) has been an active research area since the pioneering work \cite{BR1978}, see, e.g., \cite{Verfurth2013,BS2001,NSV2009} for a thorough introduction. Comparing to finite element methods using quasi-uniform meshes, AFEMs can achieve quasi-optimal convergence rate by producing a sequence of graded meshes  resolving singularity arising from irregular data of differential equations and domains with corners or slits. Typically, AFEMs can be described by the feedback loop
\begin{equation*}
   \begin{CD}
\textsf{SOLVE}@>>>\textsf{ESTIMATE}@>>>\textsf{MARK}@>>>\textsf{REFINE}.
\end{CD}
\end{equation*} Given a conforming mesh $\CT_\ell,$ the routine \textsf{SOLVE} returns the finite element solution $U_\ell$ of a discrete problem on $\Tl$. Based on the computed solution, \textsf{ESTIMATE} returns a collection of error indicators $\{\CE_\ell(U_\ell,T)\}_{T\in\Tl}$, which is used in \textsf{MARK} to select a subset $\mathcal{M}_\ell$ of $\Tl$. A conforming subtriangulation $\CT_{\ell+1}$ is then obtained using \textsf{REFINE} with the input $\mathcal{M}_\ell$ and \textsf{SOLVE} is called on $\CT_{\ell+1}$ again. Despite the popularity of AFEMs in practice, \cite{BV1984} for the one-dimensional boundary value problem had been the only convergence result of AFEMs for a long time. Using a bulk chasing marking strategy in \textsf{MARK}, D\"orfler \cite{Dorfler1996} first proved that the Lagrange element solution $U_\ell$ converges to the exact solution $U$ in the energy norm for Poisson's equation in $\mathbb{R}^2$ provided the initial mesh is fine enough. Readers are referred to \cite{MNS2000,BDD2004,Stevenson2007,CKNS2008,DKS2016} and references therein for further important progress in the analysis of convergence and optimality of AFEMs for symmetric and positive-definite elliptic problems. Of particular relevance in this paper is \cite{FFP2014}, where the authors used weak convergence technique to prove the quasi-optimal convergence rate of AFEMs for nonsymmetric and nonlinear elliptic problems.

The mixed finite element method (MFEM) is designed to numerically solve systems  of  partial  differential equations arising from elasticity, fluids, electromagnetism, computational geometry etc. In contrast to AFEMs based on positive-definite formulations, the difficulty in analysis of convergence and optimality of adaptive mixed finite element methods (AMFEMs) are two-fold. First, the a posteriori error analysis hinges on delicate decomposition results and possibly bounded commuting quasi-interpolations onto a sequence of finite elements spaces, see, e.g., \cite{Alonso1996,Schoberl2008,DH2014}. In addition, those quasi-interpolations are even required to locally preserve finite element functions when deriving discrete reliability, see, e.g., \cite{Demlow2017,ZCSWX2012}. Second, the exact solution $U$ of a system of equations is generally only a critical point of some variational principle. Hence $U-U_{\ell+1}$ is not orthogonal to $U_\ell-U_{\ell+1}$ and a technical quasi-orthogonality is indispensable, see, e.g., \cite{CH2006,CHX2009,BM2008,CFPP2014,Feischl2019}.

Consider the popular model problem for the analysis of MFEMs: Find $(\sigma,u)\in H(\divg;\Omega)\times L^2(\Omega)$ such that
\begin{equation}\label{mixPoisson}
    \begin{aligned}
        \ab{\sigma}{\tau}-\ab{\divg\tau}{u}&=0,\quad\tau\in H(\divg;\Omega),\\
        \ab{\divg\sigma}{v}&=\ab{f}{v},\quad v\in L^2(\Omega).
    \end{aligned}
\end{equation}
In fact \eqref{mixPoisson} is the mixed formulation of Poisson's equation.
Let $\{(\sigma_\ell,u_\ell,\Tl)\}_{\ell\geq0}$ be the finite element solutions and meshes produced by some AMFEM for \eqref{mixPoisson} using Raviart--Thomas (RT) or Brezzi--Douglas--Marini (BDM) elements, see \cite{RT1977,BDM1985}. Under mild assumptions, it has been shown in e.g., \cite{CHX2009,BM2008,HuangXu2012,HLMS2019} that $\sigma_\ell$ converges to $\sigma$ in the $L^2$-norm with quasi-optimal convergence rate. As far as we know, most existing AMFEMs for Poisson's equation are not able to control the error $\|\sigma-\sigma_\ell\|_{H(\divg)}+\|u-u_\ell\|$ because most error indicators and quasi-orthogonality in literature are not designed for the natural $H(\divg)\times L^2$-norm. This limitation seems not so severe for Poisson's equation, since $\divg(\sigma-\sigma_\ell)$ is trivially controlled by $f$ and the scalar variable $u$ is practically less important than the flux $\sigma$. However, there are still several works on a posteriori $H(\divg)\times L^2$-error estimation of mixed methods for \eqref{mixPoisson}, see, e.g., \cite{BV1996,CC1997,DH2014}. An analysis of optimality of AMFEMs for \eqref{mixPoisson} based on the $H(\divg)\times L^2$-error estimator, under the assumption that the initial triangulation $\mathcal{T}_0$ is fine enough in general, was presented in \cite{CarHella2017}.

Poisson's equation is a special case of the Hodge Laplace equation $(d\delta+\delta d)u=f$, which is the model problem in the theory of finite element exterior calculus (FEEC) developed by Arnold, Falk, and Winther \cite{AFW2006,AFW2010}. Here $d$ is the exterior derivative for differential forms and $\delta$ is the adjoint operator of $d$. In general, the Hodge Laplace equation is solved by the mixed method \eqref{DHL} in the FEEC literature. Adaptivity in FEEC has been an active research area in recent years. Using their regular decomposition and commuting quasi-interpolation, Demlow and Hirani \cite{DH2014} developed the first reliable a posteriori error estimator for controlling the error $\|\sigma-\sigma_\ell\|_V+\|p-p_\ell\|+\|u-u_\ell\|_V$ of the mixed method \eqref{DHL}. At the same time, Falk and Winther \cite{FW2014} constructed a technical local bounded commuting interpolation connecting the de Rham complex \eqref{deRham} and its finite element subcomplex. Using these ingredients, \cite{Demlow2017,CW2017,YL2019,HLMS2019} recently developed quasi-optimal AMFEMs for problems posed on the de Rham complex. For the Hodge Laplace equation, we \cite{YL2019} developed an AMFEM for controlling $\|\sigma-\sigma_\ell\|_V$ with quasi-optimal convergence rate and another AMFEM for controlling $\|\sigma-\sigma_\ell\|_V+\|p-p_\ell\|+\|d(u-u_\ell)\|$ without convergence rate. However, we are not aware of any existing AMFEM for the Hodge Laplace equation for controlling the error $\|\sigma-\sigma_\ell\|_V+\|u-u_\ell\|_V$ in the natural $V\times V$ mixed variational norm, where $V=H\Lambda^{k-1}(\Omega)$ or $H\Lambda^k(\Omega)$  is the Sobolev space of differential $(k-1)$-forms or $k$-forms, see Subsection \ref{secdeRham} for details.

On the other hand, Cai et al.~\cite{CaiWang2007,Cai2010a,Cai2010b} developed the pseudostress-velocity formulation \eqref{STOKES} of the Stokes equation, which can be numerically solved by the classical RT and BDM element mixed methods. Let $\bm{\sigma}$ denote the pseudostress, $\bm{u}$ the velocity, and $(\bm{\sigma}_\ell,\bm{u}_\ell)$ the finite element solutions produced by some AMFEM. Following the analysis of AMFEMs for Poisson's equation, \cite{CGS2013, HuYu2018} recently developed quasi-optimal AMFEMs for the pseudostress-velocity formulation that control the error $\|\CC^\frac{1}{2}(\bm{\sigma}-\bm{\sigma}_\ell)\|$, where $\CC$ is a positive semi-definite operator given in \eqref{CC}. Since $\|\CC^\frac{1}{2}\cdot\|$ is only a semi-norm, incorporation of $\|\Divg(\bm{\sigma}-\bm{\sigma}_\ell)\|$ is necessary for achieving norm convergence. Unlike Poisson's equation, the velocity field $\bm{u}$ in fluids is clearly an important physical quantity. From this perspective, an AMFEM for controlling $\|\bm{u}-\bm{u}_\ell\|$ is favorable.

Motivated by the Hodge Laplace and Stokes equations,
this paper is devoted to the quasi-optimal adaptive mixed method for controlling the \emph{natural norm} error. To this end, we consider the \emph{generalized} Hodge Laplace equation \eqref{GHL}, which covers the mixed formulation of Poisson's equation, the pseudostress-velocity formulation of the Stokes equation, and the Hodge Laplace equation with index $k\leq n-1$. {Throughout this paper, we assume the space of harmonic $k$-forms $\mathfrak{H}^k=\{0\}$ to avoid additional difficulty caused by nontrivial harmonic forms.} In Section \ref{secex}, we will restate our results in the classical context. The main contribution of this paper is as follows.
\begin{enumerate}
    \item Using the Demlow--Hirani regular decomposition and Falk--Winther cochain projection in FEEC, we obtain the first quasi-optimal AMFEM for the Hodge Laplace equation that reduces the error in the $V\times V$-norm {provided $\mathfrak{H}^k=\{0\}$}. In the case of $k=n$, $\CC=\text{id}$, i.e., Poisson's equation, we obtain an AMFEM that reduces the error in the $H(\divg)\times L^2$-norm.

    \item By posing the Stokes equation on the de Rham complex of \emph{vector-valued} differential forms, we modify the aforementioned tools in FEEC to derive a reliable and efficient a posteriori error estimator, and the first  AMFEM for the Stokes equation that reduces the error $\|\CC^\frac{1}{2}(\bm{\sigma}-\bm{\sigma}_\ell)\|+\|\Divg(\bm{\sigma}-\bm{\sigma}_\ell)\|+\|\bm{u}-\bm{u}_\ell\|$. The authors in \cite{CKP2011} used $\bm{u}_\ell$ to compute a more accurate postprocessed approximation $\bm{u}^*_\ell$ and derived an error estimator for $\|\bm{u}-\bm{u}_\ell^*\|.$ However, the reliability of such estimator depends on the $H^2$-regularity of $\Omega$, e.g., $\Omega$ is convex.

    \item Our results for the Poisson and Stokes equations hold on general Lipschitz polyhedral domain $\Omega$. In contrast, existing analysis of AMFEMs in e.g., \cite{CHX2009,BM2008,CGS2013,HuYu2018} assumes that the Helmholtz decomposition contains no harmonic vector fields, which hinges on the topology of the domain $\Omega$, e.g., the $(n-1)$-th Betti number of $\Omega$ is $0$.
\end{enumerate}

An important and novel ingredient of our convergence analysis is the quasi-orthogonality in Theorem \ref{relations} and Equation \eqref{trueqo}. We observe that the $H^1$-regular decomposition in \cite{DH2014} yields compact operators $\bar{\mathcal{K}}^k_1, \bar{\mathcal{K}}^k_2$ in Corollary \ref{disrd} and develop a weak convergence result in Theorem \ref{weakPG} for the Petrov--Galerkin method. Note that $\bar{\mathcal{K}}^k_1, \bar{\mathcal{K}}^k_2$ map weakly convergent sequences to strongly convergent ones. Using this fact and the $L^2$-bounded smoothed projection in \cite{CW2008}, we obtain the quasi-orthogonality between $u-u_{\ell+1}$ and $u_\ell-u_{\ell+1}$. Combining it with the quasi-orthogonality between $\sigma-\sigma_{\ell+1}$ and $\sigma_\ell-\sigma_{\ell+1}$ obtained in \cite{YL2019}, the quasi-optimal convergence rate follows with a somehow standard procedure using the idea of estimator reduction, see \cite{FFP2014}. Feischl et al.~first used the weak convergence  technique to prove quasi-optimal convergence rate of AFEMs in \cite{FFP2014}, where they observed that the lower order terms in $2^{nd}$ order elliptic equation are \emph{compact} perturbations. As far as we know, there is no convergence analysis of adaptive mixed methods in literature based on the weak convergence technique. For the mixed Poisson's problem \eqref{mixPoisson}, the analysis of the $H(\divg)\times L^2$-based AMFEM in \cite{CarHella2017} utilizes the concept of generalized quasi-orthogonality and a superconvergence estimate with fineness assumption on the initial mesh. As we shall show in Section \ref{secqo}, for the simple but important problem \eqref{mixPoisson}, our novel analysis of quasi-orthogonality does not rely on such assumption on the initial mesh.

The rest of this paper is organized as follows. In Section \ref{secHilbert}, we introduce the closed Hilbert complex, de Rham complex, and the generalized Hodge Laplace equation. In Section \ref{secEstimate}, we derive reliable and efficient a posteriori error estimator for the generalized Hodge Laplace equation on the de Rham complex. Section \ref{secqo} is devoted to the convergence and optimality analysis of the algorithm \textsf{AMFEM} and two modified adaptive algorithms. In Section \ref{secex}, we use previous results and correspondence between functions and differential forms to obtain results on scalar Poisson, vector Poisson, and Stokes equations.

\section{Hilbert complex and de Rham complex}\label{secHilbert}
Following the convention of \cite{AFW2006,AFW2010}, we  introduce FEEC in this section.
\subsection{Hilbert complex and approximation}
Given Hilbert spaces $X_1, X_2$,  we say $T: X_1\rightarrow X_2$ is a closed, densely-defined  operator if the domain $D(T)=\{v\in X_1: Tv\in X_2\text{ is defined}\}$ is a dense subspace of $X_1$, $T: D(T)\rightarrow X_2$ is linear, and the graph $\{(v,Tv): v\in D(T)\}$ is a closed subset of $X_1\times X_2$. Let $\ab{\cdot}{\cdot}_{X_i}$ denote the inner product on $X_i$, $i=1,2$.
The adjoint operator $T^*: X_2\rightarrow X_1$ is defined to be the operator whose domain is
$$D(T^*)=\{v\in X_2: \exists w\in X_1,\text{ such that }\ab{Tu}{v}_{X_2}=\ab{u}{w}_{X_1}\text{ for all }u\in D(T)\},$$
in which case $T^*v:=w.$
The adjoint $T^*$ is also a densely-defined, closed operator.  Let $R(T)$ be the range of $T$, $N(T)$ the kernel of $T$, the closed range theorem holds:
\begin{align}\label{crange}
    R(T)^\perp=N(T^*),
\end{align}
where $\perp$ denotes the orthogonal complement operation.

Consider the closed Hilbert complex $(W,d):$
\begin{equation*}
\cdots\rightarrow{} W^{k-1}\xrightarrow{d^{k-1}}W^k\xrightarrow{d^k}W^{k+1}\xrightarrow{d^{k+1}}\cdots,
\end{equation*}
i.e., for each index $k$, $W^{k}$ is a Hilbert space equipped with the inner product $\ab{\cdot}{\cdot}$ and norm $\|\cdot\|$, $d^{k}: W^{k}\rightarrow W^{k+1}$ is a densely-defined,  closed operator, $R(d^{k})\subseteq D(d^{k+1})$ is closed in $W^{k+1}$, and $d^{k+1}\circ d^{k}=0$. Let $\mathfrak{Z}^{k}=N(d^{k})$, $\mathfrak{B}^{k}=R(d^{k-1})$, and  $\mathfrak{H}^{k}=\mathfrak{Z}^{k}\cap\mathfrak{B}^{k\perp}$ denote the space of abstract harmonic $k$-forms. $\mathfrak{H}^{k}$ is also called the $k$-th cohomology group since $\mathfrak{H}^{k}\cong\mathfrak{Z}^k/\mathfrak{B}^k$.
The cochain complex $(W,d)$ has the domain complex $(V,d)$ as a subcomplex:
\begin{equation*}
\cdots\rightarrow V^{k-1}\xrightarrow{d^{k-1}}V^{k}\xrightarrow{d^{k}}V^{k+1}\xrightarrow{d^{k+1}}\cdots.
\end{equation*}
Here $V^{k}=D(d^k)$ is the domain of $d^{k}$, equipped with the $V$-inner product
\begin{align*}
\ab{u}{v}_V:=\ab{u}{v}+\ab{d^ku}{d^kv},
\end{align*}
and corresponding $V$-norm $\|\cdot\|_V.$
Let $\mathfrak{Z}^{k\perp V}=\{v\in V^k: \ab{v}{z}=0\text{ for all }z\in\mathfrak{Z}^k\}.$ There exists a constant $c_P>0,$ such that
\begin{equation}\label{Poincare}
    \|v\|_V\leq c_P\|d^kv\|\text{ for all }v\in\mathfrak{Z}^{k\perp V}.
\end{equation}
In FEEC literature, \eqref{Poincare} is called the Poincar\'e inequality.

For each index $k$, choose a finite-dimensional subspace $V_\ell^k$ of $V^k$. We assume that $dV_\ell^k\subseteq V_\ell^{k+1}$ so that $(V_\ell,d)$ is a subcomplex of $(V,d)$. Let $W_\ell^k$ be the same space $V_\ell^k$ but equipped with the $W$-inner product $\ab{\cdot}{\cdot}$.
Similarly to the continuous case, let $\mathfrak{Z}^k_\ell=N(d^k_\ell),\ \mathfrak{B}^k_\ell=R(d^{k-1}_{\ell}),$ and $\mathfrak{H}^k_\ell=\mathfrak{B}_\ell^k\cap\mathfrak{Z}_\ell^{k\perp}$. Note that in general $\mathfrak{Z}_\ell^{k\perp}\not\subseteq\mathfrak{Z}^{k\perp}$ and $\mathfrak{H}_\ell^{k}\not\subseteq\mathfrak{H}^{k}$. In order to derive a posteriori error estimate on $(V_\ell,d)$, we assume the existence of a bounded cochain projection $\pi_\ell$ from $(V,d)$ to $(V_\ell,d)$. To be precise, for each index $k$, $\pi_\ell^{k}$ maps $V^{k}$ onto $V_\ell^{k}$, $\pi_\ell^{k}|_{V_\ell^{k}}=$id, $d^{k}\pi_\ell^{k}=\pi_\ell^{k+1}d^{k}$, and $\|\pi_\ell^{k}\|_{V}=\|\pi_\ell^{k}\|_{V^{k}\rightarrow V_\ell^{k}}<\infty$ is uniformly bounded with respect to the discretization parameter $\ell$. It has been shown in \cite{AFW2010} that the discrete Poincar\'e inequality holds:
\begin{equation*}
    \|v\|_V\leq c_P\|\pi^k_\ell\|_V\|d^kv\|\text{ for all }v\in\mathfrak{Z}_\ell^{k\perp}.
\end{equation*}

\subsection{Generalized Hodge Laplacian and approximation} For each index $k$, let $d_k^*$ denote the adjoint operator of $d^{k-1}: W^{k-1}\rightarrow W^{k}$ and $V_k^*=D(d_k^*)$. Throughout the rest of this paper, we may drop the superscript or subscript $k$ provided no confusion arises. On the closed Hilbert complex $(W,d)$,
Arnold, Falk and Winther \cite{AFW2010} considered the abstract Hodge Laplace equation
$$(dd^*+d^*d)u=f,$$
where $f\in W^k$ and $u\in V^k\cap V_k^*$  satisfy the compatibility conditions $f\perp\mathfrak{H}^k$, $u\perp\mathfrak{H}^k.$
Note that $u\in V^k\cap V_k^*$ and it is difficult to construct appropriate finite element subspaces of $V^k\cap V_k^*$. Therefore, the authors in \cite{AFW2010} considered the mixed formulation: Find $(\sigma,u,q)\in V^{k-1}\times V^k\times\mathfrak{H}^k$, such that
\begin{equation}\label{HL}
\begin{aligned}
\ab{\sigma}{\tau}-\ab{d\tau}{u}&=0,\quad&&\tau\in V^{k-1},\\
\ab{d\sigma}{v}+\ab{du}{dv}+\ab{v}{p}&=\ab{f}{v},&& v\in V^{k},\\
\ab{u}{q}&=0,&& q\in\mathfrak{H}^k.
\end{aligned}
\end{equation}
Using the discrete complex $(V_\ell,d)$, the mixed method for \eqref{HL} seeks $(\sigma_\ell,u_\ell,p_\ell)\in$ $V_\ell^{k-1}\times V_\ell^{k}\times\mathfrak{H}_\ell^k$ such that
\begin{equation}\label{DHL}
\begin{aligned}
\ab{\sigma_\ell}{\tau}-\ab{d\tau}{u_\ell}&=0,\quad&&\tau\in V_\ell^{k-1},\\
\ab{d\sigma_\ell}{v}+\ab{du_\ell}{dv}+\ab{v}{p_\ell}&=\ab{f}{v},&& v\in V_\ell^{k},\\
\ab{u_\ell}{q}&=0,&& q\in\mathfrak{H}_\ell^k.
\end{aligned}
\end{equation}
In general, {$\mathfrak{H}_\ell^{k}$ is not a subspace of $\mathfrak{H}^{k}$}. For the sake of simplicity, we assume $\mathfrak{H}^k=\{0\}$ and consider the generalized Hodge Laplacian problem: Find $(\sigma,u)\in V^{k-1}\times V^k$, such that
\begin{equation}\label{GHL}
\begin{aligned}
\ab{\CC\sigma}{\tau}-\ab{d\tau}{u}&=0,\quad&&\tau\in V^{k-1},\\
\ab{d\sigma}{v}+\ab{du}{dv}&=\ab{f}{v},&& v\in V^{k},
\end{aligned}
\end{equation}
where $\CC: W^{k-1}\rightarrow W^{k-1}$ is a self-adjoint, positive semi-definite, continuous linear operator. The operator $\CC$ is not necessarily positive definite with respect to $\ab{\cdot}{\cdot}$ and $\|\CC^\frac{1}{2}\cdot\|=\ab{\CC\cdot}{\cdot}^\frac{1}{2}$ may not define a norm on $W^{k-1}$. We assume that there exists a constant $C_\CC>0$ with
\begin{equation}\label{equivCC}
    C_\CC^{-1}\|\tau\|_V^2\leq\ab{\CC\tau}{\tau}+\|d\tau\|^2\leq C_\CC\|\tau\|^2_V
\end{equation}
for all $\tau\in V^{k-1}$. For $\tau_1,\tau_2\in V^{k-1}$, let $$\ab{\tau_1}{\tau_2}_{V_\CC}:=\ab{\CC\tau_1}{\tau_2}+\ab{d\tau_1}{d\tau_2}.$$
The assumption \eqref{equivCC} shows that
$\ab{\cdot}{\cdot}_{V_\CC}$ is an inner product on $V^{k-1}$ and the $V_\CC$-norm $\|\cdot\|_{V_\CC}=\ab{\cdot}{\cdot}_{V_\CC}^\frac{1}{2}$ is equivalent to $\|\cdot\|_V.$
The mixed method for solving \eqref{GHL} is to find $(\sigma_\ell,u_\ell)\in V_\ell^{k-1}\times V_\ell^k$ satisfying
\begin{subequations}\label{DGHL}
\begin{align}
\ab{\CC\sigma_\ell}{\tau}-\ab{d\tau}{u_\ell}&=0,\quad\tau\in V_\ell^{k-1},\label{DGHL1}\\
\ab{d\sigma}{v}+\ab{du_\ell}{dv}&=\ab{f}{v},\quad v\in V_\ell^{k}.\label{DGHL2}
\end{align}
\end{subequations}
Thanks to the cochain projection $\pi_\ell,$ we obtain $\mathfrak{H}_\ell^k=\pi_\ell^k(\mathfrak{H}^k)=\{0\}$ and the well-posedness of \eqref{GHL} and \eqref{DGHL}, see Theorem \ref{infsup}. Assuming $V_\ell^{k-1}\subseteq V_{\ell+1}^{k-1}, V_\ell^{k}\subseteq V_{\ell+1}^k$ and using \eqref{DGHL}, we obtain the Galerkin orthogonality
\begin{subequations}\label{error}
\begin{align}
\ab{\CC({\sigma}_{\ell+1}-\sigma_\ell)}{\tau}-\ab{d\tau}{{u}_{\ell+1}-u_\ell}&=0,\quad\tau\in V_\ell^{k-1},\label{error1}\\
\ab{d({\sigma}_{\ell+1}-\sigma_\ell)}{v}+\ab{d({u}_{\ell+1}-u_\ell)}{dv}&=0,\quad v\in V_\ell^{k}.\label{error2}
\end{align}
\end{subequations}

Let $B(\sigma,u;\tau,v)=\ab{\CC\sigma}{\tau}-\ab{d\tau}{u}+\ab{d\sigma}{v}+\ab{du}{dv}$. The next theorem shows that $B$ satisfies the continuous and discrete inf-sup condition, which implies the well-posedness of \eqref{GHL} and \eqref{DGHL}. The proof is the same as Theorem 3.2 in \cite{AFW2010}.
\begin{theorem}\label{infsup}
Assume $\mathfrak{H}^k=\{0\}$. There exists a constant $C_{\emph{infsup}}>0$ depending only on $c_P, C_\CC$, such that
\begin{equation*}
    C_{\emph{infsup}}\big(\|\xi\|_{V_\CC}+\|w\|_V\big)\leq\sup_{\tau\in V^{k-1},v\in V^k} \frac{B(\xi,w;\tau,v)}{\|\tau\|_{V_\CC}+\|v\|_V},
\end{equation*}
for all $\xi\in V^{k-1}, w\in V^k$.
In addition, there exists a constant $C_{\emph{infsup},\ell}>0$ depending only on $c_{P}, C_\CC, \|\pi_\ell\|_V$, such that
\begin{equation*}
C_{\emph{infsup},\ell}\big(\|\xi_\ell\|_{V_\CC}+\|w_\ell\|_V\big)\leq\sup_{\tau\in V_\ell^{k-1},v\in V_\ell^k} \frac{B(\xi_\ell,w_\ell;\tau,v)}{\|\tau\|_{V_\CC}+\|v\|_V},
\end{equation*}
for all $\xi_\ell\in V_\ell^{k-1}, w_\ell\in V_\ell^k$.
\end{theorem}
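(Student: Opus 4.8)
The plan is to establish both inf-sup conditions by the explicit test-function construction of Arnold--Falk--Winther: given a trial pair $(\xi,w)$, I would assemble a single test pair $(\tau,v)$ from three elementary choices whose $V_\CC\times V$-norm is controlled by $\|\xi\|_{V_\CC}+\|w\|_V$ and for which $B(\xi,w;\tau,v)\gtrsim\|\xi\|_{V_\CC}^2+\|w\|_V^2$. The three building blocks are as follows. First, $(\tau,v)=(\xi,w)$ produces $B=\ab{\CC\xi}{\xi}+\|dw\|^2$, capturing the $\CC$-seminorm of $\xi$ and $\|dw\|$. Second, $(\tau,v)=(0,d\xi)$ produces $B=\|d\xi\|^2$, since the coupling term $\ab{dw}{d(d\xi)}$ vanishes because $d\circ d=0$; this recovers the missing $\|d\xi\|$ contribution, so that the first two blocks together control $\ab{\CC\xi}{\xi}+\|d\xi\|^2=\|\xi\|_{V_\CC}^2$ as well as $\|dw\|^2$.

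The remaining difficulty is the $L^2$-size of $w$, i.e. the part of $\|w\|_V$ not seen by $\|dw\|$, and this is exactly where the assumption $\mathfrak{H}^k=\{0\}$ enters. I would use the Hodge decomposition $w=w_{\mathfrak{B}}+w^\perp$ with $w_{\mathfrak{B}}\in\mathfrak{B}^k=R(d^{k-1})$ and $w^\perp\in\mathfrak{Z}^{k\perp V}$; the summands are $\ab{\cdot}{\cdot}$-orthogonal and $dw^\perp=dw$, so the Poincar\'e inequality \eqref{Poincare} gives $\|w^\perp\|\le\|w^\perp\|_V\le c_P\|dw\|$ and only $w_{\mathfrak{B}}$ needs a dedicated test function. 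Because $\mathfrak{H}^k=\{0\}$ forces $\mathfrak{Z}^k=\mathfrak{B}^k$, there exists $\rho\in\mathfrak{Z}^{(k-1)\perp V}$ with $d\rho=w_{\mathfrak{B}}$ and, again by \eqref{Poincare}, $\|\rho\|_V\le c_P\|w_{\mathfrak{B}}\|$. The third block $(\tau,v)=(-\rho,0)$ then yields $B=-\ab{\CC\xi}{\rho}+\ab{d\rho}{w}=-\ab{\CC\xi}{\rho}+\|w_{\mathfrak{B}}\|^2$, using the orthogonality $\ab{w_{\mathfrak{B}}}{w^\perp}=0$.

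Finally I would take $(\tau,v)=(\xi-\gamma\rho,\,w+\beta d\xi)$ for suitable positive $\beta,\gamma$ and add the three contributions, obtaining
\[
B(\xi,w;\tau,v)=\ab{\CC\xi}{\xi}+\|dw\|^2+\beta\|d\xi\|^2+\gamma\|w_{\mathfrak{B}}\|^2-\gamma\ab{\CC\xi}{\rho}.
\]
The sole cross term $\gamma\ab{\CC\xi}{\rho}$ is absorbed by Young's inequality after estimating $\|\CC^\frac{1}{2}\rho\|\le C_\CC^\frac{1}{2}\|\rho\|_V\le C_\CC^\frac{1}{2}c_P\|w_{\mathfrak{B}}\|$ via \eqref{equivCC} and the Poincar\'e bound on $\rho$; choosing $\gamma$ small enough then leaves a positive combination of $\ab{\CC\xi}{\xi}$, $\|d\xi\|^2$, $\|dw\|^2$, and $\|w_{\mathfrak{B}}\|^2$, which is $\gtrsim\|\xi\|_{V_\CC}^2+\|w\|_V^2$. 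Since the same estimates give $\|\tau\|_{V_\CC}+\|v\|_V\lesssim\|\xi\|_{V_\CC}+\|w\|_V$ (using $\|\cdot\|_V\le C_\CC^\frac{1}{2}\|\cdot\|_{V_\CC}$ from \eqref{equivCC}), dividing yields the bound with $C_{\mathrm{infsup}}$ depending only on $c_P,C_\CC$.

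For the discrete statement I would repeat the construction verbatim in $(V_\ell,d)$, replacing the Hodge decomposition by its discrete counterpart; the decisive point is that the cochain projection forces $\mathfrak{H}_\ell^k=\pi_\ell^k(\mathfrak{H}^k)=\{0\}$, so $\mathfrak{Z}_\ell^k=\mathfrak{B}_\ell^k$ and a discrete $\rho_\ell\in V_\ell^{k-1}$ with $d\rho_\ell=w_{\mathfrak{B},\ell}$ exists. The only substantive change is that the Poincar\'e constant $c_P$ is replaced throughout by $c_P\|\pi_\ell^k\|_V$ through the discrete Poincar\'e inequality, so $C_{\mathrm{infsup},\ell}$ depends on $c_P,C_\CC,\|\pi_\ell\|_V$. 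I expect the only genuine subtlety to be the construction of $\rho$ (and $\rho_\ell$) with norm control—precisely the step using $\mathfrak{H}^k=\{0\}$ and the Poincar\'e inequality—while the rest is bookkeeping of the Young's-inequality constants.
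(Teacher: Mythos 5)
Your proposal is correct and takes essentially the same route as the paper: the paper's proof is simply a citation of Theorem 3.2 in \cite{AFW2010}, whose argument is exactly your construction --- Hodge decomposition of $w$ (harmonic part absent since $\mathfrak{H}^k=\{0\}$), a Poincar\'e-controlled potential $\rho$ with $d\rho=w_{\mathfrak{B}}$, and a combined test pair of the form $(\xi-\gamma\rho,\,w+\beta d\xi)$ --- here adapted to the operator $\CC$ through \eqref{equivCC}, with the discrete case handled identically via the discrete Poincar\'e inequality and $\mathfrak{H}_\ell^k=\pi_\ell^k(\mathfrak{H}^k)=\{0\}$.
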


Demlow and Hirani \cite{DH2014} used the continuous inf-sup condition to derive their  error estimator for the method \eqref{DHL}. Using the discrete inf-sup condition, we obtain the discrete upper bound of the abstract natural norm error.
\begin{lemma}\label{absigmau}
For $1\leq k\leq n,$ it holds that
\begin{equation*}
\begin{aligned}
    &C_{\emph{infsup},\ell+1}\big(\|{\sigma}_{\ell+1}-\sigma_\ell\|_{V_\CC}+\|{u}_{\ell+1}-u_{\ell}\|_V\big)\\
    &\quad\leq\sup_{\tau\in{V}_{\ell+1}^{k-1}, \|\tau\|_{V_\CC}=1}\{\ab{\CC\sigma_\ell}{\tau-\pi_\ell\tau}-\ab{d(\tau-\pi_\ell\tau)}{u_\ell}\}\\
    &\qquad+\sup_{v\in{V}_{\ell+1}^{k}, \|v\|_V=1}\{\ab{f-d\sigma_\ell}{v-\pi_\ell v}-\ab{du_\ell}{d(v-\pi_\ell v)}\}.
\end{aligned}
\end{equation*}
\end{lemma}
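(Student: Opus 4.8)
The plan is to feed the discrete error pair $(\sigma_{\ell+1}-\sigma_\ell,\,u_{\ell+1}-u_\ell)$ into the level-$(\ell+1)$ discrete inf-sup condition of Theorem \ref{infsup}. The nestedness $V_\ell^{k-1}\subseteq V_{\ell+1}^{k-1}$ and $V_\ell^k\subseteq V_{\ell+1}^k$ guarantees this pair lies in $V_{\ell+1}^{k-1}\times V_{\ell+1}^k$, so Theorem \ref{infsup} bounds $C_{\mathrm{infsup},\ell+1}\big(\|\sigma_{\ell+1}-\sigma_\ell\|_{V_\CC}+\|u_{\ell+1}-u_\ell\|_V\big)$ by the supremum over $\tau\in V_{\ell+1}^{k-1},v\in V_{\ell+1}^k$ of $B(\sigma_{\ell+1}-\sigma_\ell,u_{\ell+1}-u_\ell;\tau,v)/(\|\tau\|_{V_\CC}+\|v\|_V)$. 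The entire task is therefore to rewrite this numerator as a residual built from $\sigma_\ell$, $u_\ell$, and the data $f$.

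First I would use bilinearity to write $B(\sigma_{\ell+1}-\sigma_\ell,u_{\ell+1}-u_\ell;\tau,v)=B(\sigma_{\ell+1},u_{\ell+1};\tau,v)-B(\sigma_\ell,u_\ell;\tau,v)$. Since $\tau,v$ are admissible test functions at level $\ell+1$, the discrete system \eqref{DGHL} posed on $V_{\ell+1}$ collapses the first term to $B(\sigma_{\ell+1},u_{\ell+1};\tau,v)=\ab{f}{v}$. Expanding the second term and grouping the $\tau$-terms apart from the $v$-terms leaves $\ab{f}{v}-\ab{\CC\sigma_\ell}{\tau}+\ab{d\tau}{u_\ell}-\ab{d\sigma_\ell}{v}-\ab{du_\ell}{dv}$. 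To convert this into the projected form, I would subtract the level-$\ell$ equations \eqref{DGHL1} and \eqref{DGHL2} tested against $\pi_\ell\tau\in V_\ell^{k-1}$ and $\pi_\ell v\in V_\ell^k$, which vanish identically; this replaces $\tau$ and $v$ by $\tau-\pi_\ell\tau$ and $v-\pi_\ell v$ and produces the residual identity
\[
B(\sigma_{\ell+1}-\sigma_\ell,u_{\ell+1}-u_\ell;\tau,v)=-\big\{\ab{\CC\sigma_\ell}{\tau-\pi_\ell\tau}-\ab{d(\tau-\pi_\ell\tau)}{u_\ell}\big\}+\big\{\ab{f-d\sigma_\ell}{v-\pi_\ell v}-\ab{du_\ell}{d(v-\pi_\ell v)}\big\}.
\]

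Finally I would estimate the quotient. The numerator is a sum of one functional linear in $\tau$ and one linear in $v$, so after division by $\|\tau\|_{V_\CC}+\|v\|_V$ the result is a convex combination of the two separately normalized functionals; hence the supremum over $(\tau,v)$ is bounded by the sum of the supremum of the $\tau$-functional over $\|\tau\|_{V_\CC}=1$ and that of the $v$-functional over $\|v\|_V=1$. Because a linear functional and its negative share the same supremum on the unit sphere, the leading minus sign on the first bracket is harmless, and the two resulting suprema are exactly those in the statement. I do not expect a genuine obstacle: the only delicate point is the sign bookkeeping when invoking \eqref{DGHL} at the two consecutive levels, together with the observation that testing the level-$(\ell+1)$ equations against the finer test functions annihilates the $B(\sigma_{\ell+1},u_{\ell+1};\cdot)$ contribution down to $\ab{f}{v}$; the rest is bilinearity and the defining properties of the cochain projection $\pi_\ell$.
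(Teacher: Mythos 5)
Your proposal is correct and follows essentially the same route as the paper: the paper also reduces $B(\sigma_{\ell+1}-\sigma_\ell,u_{\ell+1}-u_\ell;\tau,v)$ to the residual $\ab{f}{v-\pi_\ell v}-B(\sigma_\ell,u_\ell;\tau-\pi_\ell\tau,v-\pi_\ell v)$ — using the Galerkin orthogonality \eqref{error} (which is just the difference of the two levels' equations that you invoke separately) together with \eqref{DGHL} at level $\ell+1$ — and then applies the discrete inf-sup condition of Theorem \ref{infsup}. Your additional remarks on splitting the joint supremum into the two separately normalized suprema and on the irrelevance of the leading minus sign are details the paper leaves implicit, but they do not change the argument.
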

\begin{proof}
Let $\tau\in V_{\ell+1}^{k-1}$ and $v\in V_{\ell+1}^k$. It follows from \eqref{error} and \eqref{DGHL} that
\begin{equation*}
\begin{aligned}
    &B(\sigma_{\ell+1}-\sigma_\ell,u_{\ell+1}-u_\ell;\tau,v)\\
    &\quad=B(\sigma_{\ell+1}-\sigma_\ell,u_{\ell+1}-u_\ell;\tau-\pi_\ell\tau,v-\pi_\ell v)\\
    &\quad=\ab{f}{v-\pi_\ell v}-B(\sigma_\ell,u_\ell;\tau-\pi_\ell\tau,v-\pi_\ell v).
\end{aligned}
\end{equation*}
Combining it with the discrete inf-sup condition in Theorem \ref{infsup} completes the proof.
\end{proof}

\subsection{De Rham complex and approximation}\label{secdeRham}
The de Rham complex is a canonical example of the closed Hilbert complex. Let $\Omega\subset\mathbb{R}^{n}$ be a bounded Lipschitz domain. For index $0\leq k\leq n$, let $\Lambda^{k}(\Omega)$ denote the space of all smooth $k$-forms $\omega$ which can be uniquely written as
$$\omega=\sum_{1\leq \alpha_{1}<\cdots<\alpha_{k}\leq n}\omega_{\alpha}dx^{\alpha_{1}}\wedge\cdots\wedge dx^{\alpha_{k}},$$
where each $\alpha=(\alpha_1,\ldots,\alpha_k)\in\mathbb{N}^k$ is a multi-index, each coefficient $\omega_\alpha\in C^{\infty}({\Omega})$ and $\wedge$ is the wedge product. For $\eta\in\Lambda^k(\Omega)$ with $\eta=\sum_{1\leq \alpha_{1}<\cdots<\alpha_{k}\leq n}\eta_{\alpha}dx^{\alpha_{1}}\wedge\cdots\wedge dx^{\alpha_{k}},$ the inner product of $\omega$ and $\eta$ is defined as
$$\ab{\omega}{\eta}:=\sum_{1\leq \alpha_{1}<\cdots<\alpha_{k}\leq n}\int_\Omega\omega_\alpha\eta_\alpha dx.$$ The exterior derivative $d=d^k: \Lambda^{k}(\Omega)\rightarrow\Lambda^{k+1}(\Omega)$ is given by
\begin{equation}\label{derivative}
    d\omega=\sum_{1\leq \alpha_{1}<\cdots<\alpha_{k}\leq n}\sum_{j=1}^n\frac{\partial\omega_{\alpha}}{\partial x_j}dx^j\wedge dx^{\alpha_{1}}\wedge\cdots\wedge dx^{\alpha_{k}}.
\end{equation}
Let $\|\cdot\|$ denote the $L^2$-norm given by $\ab{\cdot}{\cdot}$ and $L^{2}\Lambda^{k}(\Omega)$ the space of $k$-forms with $L^2$-coefficients. Then $d$ can be understood in the distributional sense. Let $D(d^k):=H\Lambda^{k}(\Omega)=\{\omega\in L^{2}\Lambda^{k}(\Omega): d\omega\in L^{2}\Lambda^{k+1}(\Omega)\}$. The following cochain complex
\begin{equation*}
L^{2}\Lambda^{0}(\Omega)\xrightarrow{d} L^{2}\Lambda^{1}(\Omega)\xrightarrow{d}\cdots\xrightarrow{d} L^{2}\Lambda^{n-1}(\Omega)\xrightarrow{d} L^{2}\Lambda^{n}(\Omega)
\end{equation*}
is an example of the closed Hilbert complex $(W,d)$. The $L^2$-de Rham complex [corresponds to ($V,d$)] is
\begin{equation}\label{deRham}
H\Lambda^{0}(\Omega)\xrightarrow{d} H\Lambda^{1}(\Omega)\xrightarrow{d}\cdots\xrightarrow{d} H\Lambda^{n-1}(\Omega)\xrightarrow{d} H\Lambda^{n}(\Omega).
\end{equation}
In order to characterize the adjoint of $d$, we need the Hodge star operator $\star: L^2\Lambda^{k}(\Omega)\rightarrow L^2\Lambda^{n-k}(\Omega)$ determined by
$\int_{\Omega}\omega\wedge\mu=\ab{\star\omega}{\mu}$ for all $\mu\in L^2\Lambda^{n-k}(\Omega).$ The coderivative $\delta: \Lambda^{k}(\Omega)\rightarrow\Lambda^{k-1}(\Omega)$ is then determined by $\star\delta\omega=(-1)^{k}d\star\omega$. It is well known that $d$ and $\delta$ are related by the integrating by parts formula
\begin{equation}\label{IP}
\ab{d\omega}{\mu}=\ab{\omega}{\delta\mu}+\int_{\partial\Omega}\tr\omega\wedge\tr\star\mu,\quad\omega\in\Lambda^{k}(\Omega),\quad\mu\in\Lambda^{k+1}(\Omega),
\end{equation}
where the trace operator $\tr$ on $\partial\Omega$ is the pullback for differential forms induced by the inclusion $\partial\Omega\hookrightarrow\overline{\Omega}$.
If $\Omega$ is replaced by $T$ in \eqref{IP}, then $\tr$ denotes the trace on $\partial T$ by abuse of notation. We make use of the spaces  $\mathring{H}\Lambda^k(\Omega)=\{\omega\in H\Lambda^k(\Omega): \tr\omega=0\text{ on }\partial\Omega\}$ and $\mathring{H}^*\Lambda^k(\Omega)=\star\mathring{H}\Lambda^{n-k}(\Omega)$. The next lemma characterizes the adjoint operator of $d$.
\begin{theorem}[Theorem 4.1 in \cite{AFW2010}]\label{adjoint}
Let $d$ be the exterior derivative viewed as a densely-defined, closed operator $d: L^2\Lambda^{k-1}(\Omega)\rightarrow L^2\Lambda^k(\Omega)$ with domain $H\Lambda^{k-1}(\Omega)$. Then the adjoint $d^*,$ as a densely-defined, closed operator $L^2\Lambda^k(\Omega)\rightarrow L^2\Lambda^{k-1}(\Omega)$, has domain
$\mathring{H}^{*}\Lambda^{k}(\Omega):=\star \mathring{H}\Lambda^{n-k}(\Omega)=\{\omega\in H^*\Lambda^{k}(\Omega): \tr\star
\omega=0\text{ on }\partial\Omega\}$,
and coincides with $\delta.$
\end{theorem}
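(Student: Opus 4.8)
The plan is to verify the two defining properties of the adjoint directly from the integration by parts formula \eqref{IP}, establishing the claimed domain by a double inclusion. Recall that $\omega\in L^2\Lambda^k(\Omega)$ belongs to $D(d^*)$ precisely when there is some $w\in L^2\Lambda^{k-1}(\Omega)$ with $\ab{d\eta}{\omega}=\ab{\eta}{w}$ for every $\eta\in H\Lambda^{k-1}(\Omega)$, in which case $d^*\omega=w$. Comparing this with \eqref{IP}, whose right-hand side reads $\ab{\eta}{\delta\omega}+\int_{\partial\Omega}\tr\eta\wedge\tr\star\omega$, suggests that the boundary term is exactly the obstruction to membership in $D(d^*)$, and that $d^*$ should agree with $\delta$ whenever that term drops out. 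First I would establish \eqref{IP} for smooth forms and extend it by density of $\Lambda^\bullet(\overline{\Omega})$ in the relevant graph norms, so that the trace pairing is well defined for all admissible forms.

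For the inclusion $\mathring{H}^*\Lambda^k(\Omega)\subseteq D(d^*)$, take $\omega\in\mathring{H}^*\Lambda^k(\Omega)$, so that $\star\omega\in\mathring{H}\Lambda^{n-k}(\Omega)$ and hence $\tr\star\omega=0$ on $\partial\Omega$. Then \eqref{IP} collapses to $\ab{d\eta}{\omega}=\ab{\eta}{\delta\omega}$, first for smooth $\eta$ and then, by density, for all $\eta\in H\Lambda^{k-1}(\Omega)$. Since $\omega\in H^*\Lambda^k(\Omega)$ gives $\delta\omega\in L^2\Lambda^{k-1}(\Omega)$, the choice $w=\delta\omega$ witnesses $\omega\in D(d^*)$ and yields $d^*\omega=\delta\omega$, which is the coincidence asserted in the theorem.

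The reverse inclusion $D(d^*)\subseteq\mathring{H}^*\Lambda^k(\Omega)$ is the substantive direction and proceeds in two stages. Let $\omega\in D(d^*)$ with $d^*\omega=w$. Testing the identity $\ab{d\eta}{\omega}=\ab{\eta}{w}$ against compactly supported smooth $\eta$, for which the boundary integral in \eqref{IP} vanishes, shows that $\delta\omega=w$ holds in the distributional sense; hence $\delta\omega\in L^2\Lambda^{k-1}(\Omega)$, i.e.\ $\omega\in H^*\Lambda^k(\Omega)$, and $d^*\omega=\delta\omega$. Feeding this back into \eqref{IP} for arbitrary $\eta\in H\Lambda^{k-1}(\Omega)$ forces $\int_{\partial\Omega}\tr\eta\wedge\tr\star\omega=0$ for every such $\eta$. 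It remains to conclude $\tr\star\omega=0$, and for this I would invoke surjectivity of the trace map $\tr$ from $H\Lambda^{k-1}(\Omega)$ onto its boundary trace space: since the traces $\tr\eta$ exhaust a space large enough to separate boundary data, the vanishing of the pairing against all of them annihilates $\tr\star\omega$, giving $\star\omega\in\mathring{H}\Lambda^{n-k}(\Omega)$ and therefore $\omega\in\mathring{H}^*\Lambda^k(\Omega)$.

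The main obstacle is the rigorous trace theory underpinning both the extension of \eqref{IP} to $L^2$ forms with only $L^2$ exterior (co)derivative and the final surjectivity argument: the traces $\tr\eta$ and $\tr\star\omega$ do not live in $L^2(\partial\Omega)$ but in mutually dual fractional-order boundary spaces, so the boundary pairing must be interpreted as a duality pairing obtained as the continuous extension of the smooth-form integral. Verifying that this pairing is well defined and nondegenerate---equivalently, that the trace operator is bounded and surjective onto the appropriate trace space---is the technical heart of the argument; once it is in place, the remaining steps are the routine density and distributional computations sketched above.
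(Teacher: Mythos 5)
The paper does not prove this statement: it is quoted verbatim as Theorem 4.1 of \cite{AFW2010}, so there is no internal proof to compare against. Judged on its own, your outline is the standard argument for identifying $d^*$ with $\delta$ and is structurally sound: both inclusions, the distributional identification $\delta\omega=w$ via compactly supported test forms, and the reduction of the domain question to a statement about vanishing boundary traces are exactly the right skeleton, and you correctly locate the real difficulty in the trace theory on Lipschitz domains rather than in the formal computation.

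Two refinements would make the plan watertight. First, as written you test the identity $\ab{d\eta}{\omega}=\ab{\eta}{w}$ against \emph{all} $\eta\in H\Lambda^{k-1}(\Omega)$ and interpret the residual boundary term; but when $\eta$ has only $H\Lambda$-regularity, both $\tr\eta$ and $\tr\star\omega$ are merely distributional traces, and pairing two such objects requires the full dual-trace-space machinery of Mitrea--Mitrea--Shaw \cite{MMS2008} --- this is much heavier than what the theorem needs. It suffices to test with $\eta$ smooth up to $\overline{\Omega}$ (or in $H^1\Lambda^{k-1}(\Omega)$): then $\tr\eta\in H^{1/2}$, the weak trace $\tr\star\omega$ is defined as a functional on such traces precisely by the defect in \eqref{IP}, and the surjectivity you should invoke is that of the classical $H^1\to H^{1/2}$ trace map (plus density of smooth forms in $H\Lambda^{k-1}(\Omega)$ in the graph norm), not surjectivity of the trace on all of $H\Lambda^{k-1}(\Omega)$. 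Conversely, for the inclusion $\mathring{H}^*\Lambda^k(\Omega)\subseteq D(d^*)$, both sides of $\ab{d\eta}{\omega}=\ab{\eta}{\delta\omega}$ are continuous in $\eta$ with respect to the graph norm, so verifying it on the dense subspace of smooth forms and passing to the limit avoids ever pairing two weak traces. Second, be careful that the conclusion ``$\tr\star\omega=0$ as a functional, hence $\star\omega\in\mathring{H}\Lambda^{n-k}(\Omega)$'' matches the definition of $\mathring{H}\Lambda$ being used: if $\mathring{H}\Lambda$ is defined by vanishing weak trace (as in this paper) the step is immediate, but if it is defined as the closure of compactly supported smooth forms, an additional (nontrivial) density theorem on Lipschitz domains is needed to close the loop.
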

The generalized Hodge Laplaican problem \eqref{GHL} on the de Rham complex uses $V^{k-1}=H\Lambda^{k-1}(\Omega), V^k=H\Lambda^k(\Omega)$. A more compact form of \eqref{GHL} is
\begin{equation}\label{compactHL}
\begin{aligned}
\CC\sigma=\delta u,\quad d\sigma+\delta d u=f.
\end{aligned}
\end{equation}
Since $u\in D(\delta)=\mathring{H}^*\Lambda^k(\Omega)$ and $du\in D(\delta)=\mathring{H}^*\Lambda^{k+1}(\Omega)$, the boundary conditions $\tr\star u=0$, $\tr\star du=0$ on $\partial\Omega$ are implicitly imposed in \eqref{compactHL}. When $\CC=\text{id}$, \eqref{compactHL} reduces to the standard Hodge Laplacian problem
\begin{equation}\label{standardHL}
\begin{aligned}
    (d\delta+\delta d)u&=f\text{ in }\Omega,\\
    \tr\star u=0,\quad\tr\star du&=0\text{ on }\partial\Omega.
\end{aligned}
\end{equation}

Throughout the rest of this paper,
$\CT_0\leq\CT_1\leq\cdots\leq\Tl\leq\cdots$ denotes a sequence of nested conforming simplicial triangulations of $\Omega$, where  $\CT_\ell\leq\CT_{\ell+1}$ means $\CT_{\ell+1}$ is a refinement of $\Tl$. For $T\in\Tl$, let $|T|$ denote the volume of $T$ and
$h_T:=|T|^{\frac{1}{n}}$. We assume that $\{\CT_\ell\}_{\ell\geq0}$ is shape regular, namely, $$\sup_{\ell\geq0}\max_{T\in\CT_\ell}\frac{r_{T}}{\rho_{T}}\leq C_{\CT_0}<\infty,$$ where $r_T$ and $\rho_{T}$ are radii of circumscribed and inscribed spheres of the simplex $T$, respectively. Let $\mathcal{P}_r\Lambda^{k}(T)$ denote the space of $k$-forms on $T$ with polynomial coefficients of degree $\leq r$. Let
\begin{align*}
    &\mathcal{P}_r\Lambda^{k}(\Tl)=\{v\in H\Lambda^k(\Omega): v|_T\in\mathcal{P}_r\Lambda^{k}(T)\text{ for all }T\in\Tl\},\\
    &\mathcal{P}^-_{r+1}\Lambda^{k}(\Tl)=\mathcal{P}_r\Lambda^{k}(\Tl)+\kappa\mathcal{P}_{r}\Lambda^{k+1}(\Tl),
\end{align*}
where $\kappa: L^2\Lambda^k(\Omega)\rightarrow L^2\Lambda^{k-1}(\Omega)$ is the interior product given by
\begin{equation}\label{kappa}
    (\kappa\omega)_x(v_1,\ldots,v_{k-1})=\omega_x(X(x),v_1,\ldots,v_{k-1}),\quad \forall v_1,\ldots,v_{k-1}\in\mathbb{R}^n,
\end{equation}
with $X(x)=(x_1,\ldots,x_n)^T$ being the position vector field. For $r\geq0,$ let
\begin{equation}\label{pairs}
    \begin{aligned}
    V_\ell^{k-1}&=\mathcal{P}_{r+1}\Lambda^{k-1}(\Tl)\text{ or }\mathcal{P}_{r+1}^-\Lambda^{k-1}(\Tl),\\
    V_\ell^k&=\mathcal{P}_{r+1}^-\Lambda^k(\Tl)\text{ or }\mathcal{P}_r\Lambda^{k}(\Tl),
\end{aligned}
\end{equation}
Other spaces $V_\ell^j$ with $j\neq k, k-1$  are chosen in the same way. In $\mathbb{R}^n$, there are $2^{n-1}$ different  discrete subcomplexes $(V_\ell,d)$ on a simplicial triangulation $\Tl$.

\section{A posteriori error estimate}\label{secEstimate}
In the rest of this paper, $A\lesssim B$ provided $A\leq C\cdot B$ and $C$ is a generic constant depending only on $C_\CC, {\CT_0}$ and $\Omega$.  Let $H^{s}\Lambda^{k}(\Omega)$ be the space of $k$-forms whose coefficients are in $H^{s}(\Omega)$. Formula \eqref{IP} still holds for $\omega\in H^{1}\Lambda^{k}(\Omega)$ and $\mu\in H^{1}\Lambda^{k+1}(\Omega).$ Let $\|\cdot\|_{H^1(\Omega)}$ denote the $H^1\Lambda^l(\Omega)$ norm with some $l$ depending on the context. Let $\ab{\cdot}{\cdot}_T$ denote the $L^2$-inner product restricted to $T$, $\|\cdot\|_{T}$ and $\|\cdot\|_{\partial T}$ the $L^{2}$-norms restricted to $T$ and $\partial T$, respectively.

To derive discrete upper bounds on the de Rham complex, we need the local $V$-bounded cochain projection $\pi_\ell$ developed by Falk and Winther \cite{FW2014}. The existence of $\pi_\ell$ implies the discrete inf-sup condition by Theorem \ref{infsup}.
\begin{theorem}[local $V$-bounded cochain projection]\label{cochainproj}
For each index $0\leq k\leq n,$ there exists a projection $\pi_\ell^k:  H\Lambda^k(\Omega)\rightarrow V_\ell^k$ commuting with the exterior derivative, i.e.~$\pi_\ell^k|_{V_\ell^k}=\emph{id}$ and $d^k\pi_\ell^k=\pi_\ell^{k+1}d^k$. For $T\in\Tl$, let $D_T=\{T^\prime\in\Tl: T^\prime\cap T\neq\emptyset\}$  and $V_{\ell}^k|_{D_T}=\{v|_{D_T}: v\in V_\ell^k\}$. Then
\begin{subequations}
\begin{align}
v-\pi_\ell^k v&=0\text{ on }T\text{ for }v\in V_{\ell}^k|_{D_T},\label{local}\\
\|\pi_\ell^kv\|_{H\Lambda^k(T)}&\lesssim\|v\|_{H\Lambda^k(D_T)}\text{ for }v\in H\Lambda^k(D_T)\label{Vbound}.
\end{align}
\end{subequations}
In adddition, for $v\in H^1\Lambda^k(D_T),$
\begin{equation}\label{approxpi}
|\pi_\ell^kv|_{H^1(T)}+h_T^{-1}\|v-\pi^k_\ell v\|_T+h_T^{-\frac{1}{2}}\|\tr(v-\pi^k_\ell v)\|_{\partial T}\lesssim|v|_{H^1(D_T)}.
\end{equation}
\end{theorem}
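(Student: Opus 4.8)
The plan is to construct $\pi_\ell$ following the Falk--Winther strategy of \emph{smoothing followed by canonical interpolation}, corrected into a genuine projection by purely local means. First I would recall the canonical FEEC interpolant $I_\ell^k\colon\Lambda^k\to V_\ell^k$ defined through the degrees of freedom: it reproduces finite element forms, $I_\ell^k|_{V_\ell^k}=\text{id}$, and commutes with $d$, but it is unbounded on $H\Lambda^k(\Omega)$ because its defining functionals involve traces on subsimplices. To repair boundedness without sacrificing commutativity I would introduce a mesh-dependent regularization $R_\ell^k$ mapping $H\Lambda^k(\Omega)$ into a piecewise-smooth space, built so that $dR_\ell^k=R_\ell^{k+1}d$ and so that $(R_\ell^kv)|_T$ depends only on $v|_{D_T}$; such an operator is obtained by averaging over a macroelement neighborhood in a reference configuration. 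The composite $J_\ell^k=I_\ell^kR_\ell^k$ is then bounded on $H\Lambda^k$, commutes with $d$, and is local.

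The main obstacle is that $J_\ell$ is not yet a projection, and the standard remedy $\pi_\ell=(J_\ell|_{V_\ell})^{-1}J_\ell$ reintroduces a global inverse that would destroy the locality \eqref{local}. The heart of the argument is to perform this correction locally. Following Falk--Winther, I would assemble $R_\ell$ from local cochain projections $R_x$ on the vertex patches $\Omega_x$; since each $\Omega_x$ is contractible, the Poincar\'e lemma furnishes a homotopy operator there, and at the polynomial level the Koszul operator $\kappa$ of \eqref{kappa} gives an explicit contraction with $d\kappa+\kappa d=\text{id}$ on forms of positive degree. With a partition of unity $\{\phi_x\}$ subordinate to $\{\Omega_x\}$, localization produces the commutator error $d(\phi_x\omega)-\phi_xd\omega=d\phi_x\wedge\omega$; because $\sum_xd\phi_x=0$, these terms can be cancelled by correction terms built from $\kappa$, so that the glued operator commutes with $d$ \emph{exactly} while staying local. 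The restriction $J_\ell|_{V_\ell}$ then splits into commuting local blocks whose inverses are again local, yielding a genuine local commuting projection $\pi_\ell$; identity \eqref{local} follows since $\phi_x$ is supported in $\Omega_x$ and $R_x$ already reproduces forms lying in $V_\ell^k$ near $T$.

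Finally I would establish the quantitative estimates \eqref{Vbound} and \eqref{approxpi} by a scaling argument. Mapping $D_T$ affinely to a reference macroelement of unit diameter turns $\pi_\ell^k$ into a fixed bounded operator independent of $\ell$; applying the Bramble--Hilbert lemma on the reference configuration controls $v-\pi_\ell^kv$ and its trace by the seminorm $|v|_{H^1}$, and scaling back inserts the powers of $h_T$ recorded in \eqref{approxpi}. Shape regularity guarantees that the reference operator has norm bounded uniformly over $T$ and $\ell$, so all hidden constants depend only on $C_{\CT_0}$ and $\Omega$, as claimed.
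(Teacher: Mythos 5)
Your proposal is far more ambitious than the paper's own proof, and both of its halves have gaps. The paper does not reconstruct the Falk--Winther operator at all: properties \eqref{local} and \eqref{Vbound} are simply quoted from \cite{FW2014}, and the only content of the paper's proof is the estimate \eqref{approxpi}, obtained by comparing $\pi_\ell^k$ with the Scott--Zhang interpolant $\CI_\ell$ of \cite{SZ1990}. Your first two paragraphs instead sketch a rebuilding of the Falk--Winther construction, and the decisive step is asserted rather than proved: the claim that ``the restriction $J_\ell|_{V_\ell}$ splits into commuting local blocks whose inverses are again local'' is precisely the central difficulty of \cite{FW2014} --- turning a bounded, commuting, \emph{local} operator into a genuine projection without invoking a global inverse --- and overcoming it occupies most of that paper (and not by blockwise inversion of $J_\ell|_{V_\ell}$). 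As written, this part of your argument is a placeholder for the hard theorem rather than a proof of it; citing \cite{FW2014}, as the paper does, is the legitimate move.

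The second gap is in the scaling argument you propose for \eqref{Vbound} and \eqref{approxpi}, and it is a genuine mathematical obstruction, not a technicality. First, there is no affine map carrying $D_T$ onto a fixed reference macroelement: $D_T$ is a union of simplices whose number and arrangement vary with $T$ and $\ell$, so rescaling produces a \emph{family} of operators, never ``a fixed bounded operator independent of $\ell$''; at best one can hope for bounds uniform over the shape-regular family. Second, and more seriously, \eqref{Vbound} does not rescale into such a uniform bound, because the $H\Lambda$-norm is inhomogeneous under dilation: pulling back under $x=h_T\hat x$, the terms $\|v\|_{L^2(D_T)}$ and $\|dv\|_{L^2(D_T)}$ acquire different powers of $h_T$, and \eqref{Vbound} only yields $\|\hat\pi\hat v\|_{L^2(\hat T)}\lesssim\|\hat v\|_{L^2(\hat D)}+h_T^{-1}\|d\hat v\|_{L^2(\hat D)}$ for the rescaled operator $\hat\pi$. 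This family is not uniformly bounded on $H^1$ of the reference configuration, so the Bramble--Hilbert lemma cannot be applied there, and the factor $h_T$ in \eqref{approxpi} is not recovered; chasing constants, \eqref{local} and \eqref{Vbound} alone only give $\|v-\pi_\ell^k v\|_T\lesssim h_T|v|_{H^1(D_T)}+\|dv\|_{L^2(D_T)}$, where the last term carries no power of $h_T$. What is needed is either the properly $h$-scaled local estimate $\|\pi_\ell^k v\|_T\lesssim\|v\|_{L^2(D_T)}+h_T\|dv\|_{L^2(D_T)}$ (which is the form actually established in \cite{FW2014}), or the paper's device: since $\pi_\ell^k$ and $\CI_\ell$ both reproduce every element of $V_\ell^k|_{D_T}$ on $T$, their difference annihilates the whole space $V_\ell^k|_{D_T}$, Bramble--Hilbert applied to that \emph{difference} gives $\|\pi_\ell^k v-\CI_\ell v\|_T\lesssim h_T|v|_{H^1(D_T)}$, and \eqref{approxpi} then follows from the standard Scott--Zhang estimates, an inverse inequality, the triangle inequality, and a scaled trace inequality. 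Your proposal, which tries to extract \eqref{approxpi} from \eqref{Vbound} by pure rescaling, cannot close this loop.
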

\begin{proof}
Properties \eqref{local} and \eqref{Vbound} are given by Falk and Winther in \cite{FW2014}.

Let $\CI_\ell: H^1\Lambda^k(\Omega)\rightarrow V_\ell^k$ be the interpolation given by applying the Scott--Zhang interpolation (see~\cite{SZ1990}) on $\Tl$ to each coefficient of the $k$-form in $H^1\Lambda^k(\Omega)$.
It follows from the property \eqref{local} and the same property of $\CI_\ell$ that
\begin{equation*}
     \pi_\ell^kv-\CI_\ell v=0\text{ on }T\text{ for }v\in V_\ell^k|_{D_T},
\end{equation*}
and thus
\begin{equation}\label{perserv}
     \|\pi_\ell^kv-\CI_\ell v\|_T\lesssim h_T|v|_{H^1(D_T)}
\end{equation}
by the Bramble--Hilbert lemma. In addition, it is well-known that
\begin{align}\label{approxI}
    |\CI_\ell v|_{H^1(T)}&\lesssim|v|_{H^1(D_T)},\quad \|v-\CI_\ell v\|_{T}\lesssim h_T|v|_{H^1(D_T)}.
\end{align}
Then using the triangle inequality, \eqref{perserv} and \eqref{approxI}, we obtain
\begin{align*}
    |\pi^k_\ell v|_{H^1(T)}\leq|\pi^k_\ell v-\CI_\ell v|_{H^1(T)}+|\CI_\ell v|_{H^1(T)}&\lesssim|v|_{H^1(D_T)},\\
    \|v-\pi^k_\ell v\|_T\leq\|v-\CI_\ell v\|_T+\|\CI_\ell v-\pi^k_\ell v\|_T&\lesssim h_T|v|_{H^1(D_T)}.
\end{align*}
Combining it with the trace inequality verifies $h_T^{-\frac{1}{2}}\|\tr(v-\pi_\ell^kv)\|_{\partial T}\lesssim|v|_{H^1(D_T)}.$
\end{proof}
Using \eqref{approxpi} and the bounded overlapping property of $D_T$, we obtain
\begin{align}\label{Gapproxpi}
    |\pi_\ell v|^2_{H^1(\Omega)}+\sum_{T\in\Tl}h_T^{-2}\|v-\pi_\ell v\|^2_T+h_T^{-1}\|\tr(v-\pi_\ell v)\|^2_{\partial T}\lesssim|v|^2_{H^1(\Omega)}.
\end{align}

In addition to $\pi_\ell$, we need an $H^1$-regular decomposition result, see Lemma 5 in \cite{DH2014}. The proof therein hinges on the technical $H^1$-solution regularity of the equation $d\varphi=g$ under the Dirichlet boundary condition, see e.g., \cite{MMM2008,Schwarz1995}. In our convergence analysis, the linearity of such regular decomposition is also required. Since only the natural boundary condition is considered, we give a simple proof of the regular decomposition below. For convenience, let $H\Lambda^{-1}(\Omega)=\{0\}.$
\begin{theorem}[regular decomposition]\label{rd}
Let $\Omega$ be a bounded Lipschitz domain in $\mathbb{R}^n.$ For $0\leq k\leq n$, there exist bounded linear operators $\mathcal{K}^k_1: H\Lambda^k(\Omega)\rightarrow H^1\Lambda^{k-1}(\Omega)$ and $\mathcal{K}^k_2: H\Lambda^k(\Omega)\rightarrow H^1\Lambda^{k}(\Omega)$, such that for $v\in H\Lambda^k(\Omega)$,
\begin{equation*}
    v=d\mathcal{K}^k_1v+\mathcal{K}^k_2v.
\end{equation*}
\end{theorem}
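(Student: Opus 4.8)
The plan is to sidestep the boundary regularity of the equation $d\varphi=g$ — the crux of the Demlow--Hirani argument — by extending $v$ to all of $\mathbb R^n$, performing a \emph{regularized} Hodge decomposition there, where no boundary conditions intervene, and restricting the result back to $\Omega$. Since every operation in the construction is linear, the resulting maps $\mathcal K^k_1,\mathcal K^k_2$ are automatically linear, which is the extra feature needed later in the convergence analysis. Concretely, I would first fix a bounded linear extension operator $E\colon H\Lambda^k(\Omega)\to H\Lambda^k(\mathbb R^n)$ for the Lipschitz domain $\Omega$, and compose it with multiplication by a smooth cut-off $\chi$ equal to $1$ on a neighbourhood of $\overline\Omega$. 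Because $d(\chi\,Ev)=d\chi\wedge Ev+\chi\,d(Ev)\in L^2$, the form $\tilde v:=\chi\,Ev\in H\Lambda^k(\mathbb R^n)$ has compact support, agrees with $v$ on $\Omega$, and satisfies $\|\tilde v\|_{H\Lambda^k(\mathbb R^n)}\lesssim\|v\|_{H\Lambda^k(\Omega)}$.

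Next, on $\mathbb R^n$ let $\Delta^{-1}$ denote the componentwise inverse Laplacian given by convolution with the fundamental solution, and let $\delta$ be the constant-coefficient coderivative (still denoted $\delta$). Since $d$ and $\delta$ have constant coefficients they commute with $\Delta^{-1}$, and in the flat metric $\Delta=d\delta+\delta d$ acts componentwise, so that
\[
\tilde v=\Delta\Delta^{-1}\tilde v=d(\delta\Delta^{-1}\tilde v)+\delta\Delta^{-1}(d\tilde v).
\]
I would therefore set $\psi:=\delta\Delta^{-1}\tilde v$, a $(k-1)$-form, and $z:=\delta\Delta^{-1}(d\tilde v)$, a $k$-form, so that $\tilde v=d\psi+z$. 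Defining $\mathcal K^k_1 v:=\psi|_\Omega$ and $\mathcal K^k_2 v:=z|_\Omega$ then yields $v=d\mathcal K^k_1 v+\mathcal K^k_2 v$ on $\Omega$, because $\tilde v=v$ there and $d$ commutes with restriction. The degenerate cases are consistent with the convention $H\Lambda^{-1}(\Omega)=\{0\}$: when $k=0$ the coderivative annihilates $0$-forms so $\mathcal K^0_1=0$, and when $k=n$ one has $d\tilde v=0$ so $\mathcal K^n_2=0$.

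For the $H^1$-bounds I would invoke interior elliptic regularity of the Newtonian potential. As $\tilde v$ is compactly supported, Young's inequality bounds the lower-order terms of $\Delta^{-1}\tilde v$ on a fixed ball containing $\overline\Omega$, while the second derivatives $\partial_i\partial_j\Delta^{-1}$ are Calder\'on--Zygmund operators bounded on $L^2(\mathbb R^n)$; together these give $\|\Delta^{-1}\tilde v\|_{H^2(\Omega)}\lesssim\|\tilde v\|_{L^2(\mathbb R^n)}$. Since $\delta$ is first order, $\|\psi\|_{H^1(\Omega)}=\|\delta\Delta^{-1}\tilde v\|_{H^1(\Omega)}\lesssim\|\tilde v\|_{L^2}\lesssim\|v\|_{H\Lambda^k(\Omega)}$, and the identical estimate applied to the compactly supported $L^2$-form $d\tilde v$ gives $\|z\|_{H^1(\Omega)}\lesssim\|d\tilde v\|_{L^2}\lesssim\|v\|_{H\Lambda^k(\Omega)}$. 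The linearity of $\mathcal K^k_1,\mathcal K^k_2$ is immediate, since $E$, the cut-off, $\Delta^{-1}$, $d$, $\delta$, and restriction are all linear.

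I expect the extension operator to be the step requiring the most care: one must invoke a bounded linear extension $H\Lambda^k(\Omega)\to H\Lambda^k(\mathbb R^n)$ valid on an arbitrary bounded Lipschitz domain (the $H(\mathrm{curl})$ and $H(\mathrm{div})$ cases are classical, and the general $k$ case needs the analogue for $H\Lambda^k$). A secondary subtlety is that $\Delta^{-1}$ is \emph{not} bounded on $L^2(\mathbb R^n)$ because of its low-frequency behaviour; this is precisely why the cut-off to compact support, rather than a bare extension, is essential, so that Young's inequality supplies the missing local $L^2$-control while Calder\'on--Zygmund theory handles the top-order derivatives.
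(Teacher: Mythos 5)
Your construction is correct, and it takes a genuinely different route from the paper's. The paper also begins with a bounded linear extension operator, but it extends only to a ball $B\supset\supset\overline\Omega$ and then argues abstractly inside the Hilbert-complex framework: it takes $z$ to be the solution of $dz=dEv$ orthogonal to $N(d_B)$, identifies $N(d_B)^{\perp}=R(d_B^*)\subset\mathring{H}^*\Lambda(B)$ by the closed range theorem, obtains $z\in H^1\Lambda(B)$ from a Gaffney-type embedding of $H\Lambda(B)\cap\mathring{H}^*\Lambda(B)$ into $H^1$ combined with the Poincar\'e inequality, and then uses contractibility of $B$ to produce $\varphi$ with $d\varphi=Ev-z$, repeating the same regularity argument for $\varphi$; the cases $k=0$ and $k=n$ are treated separately (the latter via the $H^1$-regular right inverse of $\divg$ from Arnold--Falk--Winther). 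You replace all of this by classical potential theory: the explicit formulas $\psi=\delta\Delta^{-1}\tilde v$ and $z=\delta\Delta^{-1}(d\tilde v)$ produce both pieces at once, uniformly in $k$ (the degenerate cases fall out automatically rather than being special-cased), with the $H^1$ bounds coming from Young's inequality for the low-order terms and Plancherel/Calder\'on--Zygmund for $\partial_i\partial_j\Delta^{-1}$. What the paper's route buys is that it never leaves the $L^2$ Hodge-theoretic toolkit already assembled (closed range, Poincar\'e), at the price of invoking the Gaffney embedding; what your route buys is an explicit, degree-uniform formula whose linearity and boundedness are immediate, at the price of singular-integral machinery. Both arguments rest on the same genuinely nontrivial external ingredient---the bounded linear extension operator for $H\Lambda^k$ on a Lipschitz domain (Mitrea--Mitrea--Shaw; Lemma 5 of Demlow--Hirani)---and both deliver exactly what is needed downstream (linearity plus the $H^1$ bounds feeding the compactness argument of Corollary \ref{disrd} and the discrete reliability proof). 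Two small points to fix in your write-up: the Hodge Laplacian $d\delta+\delta d$ is the \emph{negative} of the componentwise scalar Laplacian, so the fundamental solution must be normalized with the opposite sign (equivalently, insert minus signs into $\psi$ and $z$); and you should note explicitly that extension by zero of the compactly supported form $\chi\,Ev$ commutes with $d$, which is what places $\tilde v$ in $H\Lambda^k(\mathbb{R}^n)$.
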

\begin{proof}
When $k=0,$ $H\Lambda^0(\Omega)=H^1(\Omega)$ and $\mathcal{K}^k_1v=0, \mathcal{K}^k_2v=v.$
When $k=n$, $d^{n-1}$ is identified with the divergence operator $\divg$, see Section \ref{secex}. In this case, let $\mathcal{K}^k_2v=0$ and $\mathcal{K}^k_1$ be the $H^1$-regular right inverse of $\divg,$ i.e., $d\mathcal{K}^k_1v=v$, see Theorem 2.4 in \cite{AFW2006} for details.

Assume $1\leq k\leq n-1.$ Let $\overline{\Omega}$ be a compact subset of a ball $B\subset\mathbb{R}^n$.
There exists a linear bounded extension operator $E: H\Lambda^k(\Omega)\rightarrow H\Lambda^k(B)$, see e.g., \cite{MMS2008} and Lemma 5 in \cite{DH2014}. Consider the exterior derivative on $B$: $$d_B=d: L^2\Lambda^{k-1}(B)\rightarrow{}L^2\Lambda^{k}(B),\quad D(d_B)=H\Lambda^{k-1}(B).$$
For $v\in H\Lambda^k(\Omega),$ we can take $z\in N(d_B)^\perp\cap H\Lambda^{k-1}(B)$ such that $dz=dEv.$ Due to \eqref{crange}, Theorem \ref{adjoint}, and $\delta\circ\delta=0$, we have $$N(d_B)^{\perp}=R(d_B^*)=\delta(\mathring{H}^*\Lambda^k(B))\subset \mathring{H}^*\Lambda^{k-1}(B).$$
Therefore $z\in\mathring{H}^*\Lambda^{k-1}(B)\cap H\Lambda^{k-1}(B)\subset H^1\Lambda^k(B)$, where the following Sobolev embedding  (cf.~\cite{Gaffney1955}) is used:
\begin{align*}
   \|z\|_{H^1(B)}\lesssim\|z\|_{L^2(B)}+\|dz\|_{L^2(B)}+\|\delta z\|_{L^2(B)}.
\end{align*}
Using the Poincar\'e inequality \eqref{Poincare} and $\delta z=0$, the above estimate reduces to
\begin{align}\label{bdz}
   \|z\|_{H^1(B)}\lesssim\|dz\|_{L^2(B)}=\|dEv\|_{L^2(B)}\lesssim\|v\|_{H\Lambda^k(\Omega)}.
\end{align} Since $B$ is contractible and $d(Ev-z)=0,$ there exists $\varphi\in H\Lambda^{k-1}(B),$ such that $d\varphi=Ev-z.$ Furthermore, $\varphi$ can be chosen in $H^1\Lambda^{k-1}(B)$ as in the proof of $H^1$-regularity of $z$. In addition, the $H^1$-norm of $\varphi$ can be controlled by
\begin{align}\label{bdvarphi}
    \|\varphi\|_{H^1(B)}\lesssim\|d\varphi\|_{L^2(B)}=\|Ev-z\|_{L^2(B)}\lesssim\|v\|_{H\Lambda^k(\Omega)}.
\end{align}
Taking $\mathcal{K}^k_1v=\varphi|_{\Omega}, \mathcal{K}^k_2v=z|_{\Omega}$ and using \eqref{bdz} and \eqref{bdvarphi} completes the proof.
\end{proof}
Although $\pi_\ell$ is $V$-bounded, $\pi_\ell v$ is not defined for all $v\in L^2\Lambda^k(\Omega)$. Instead, Christiansen and Winther \cite{CW2008} constructed an $L^2$-bounded smoothed cochain projection $\bar{\pi}_\ell$, i.e., $\bar{\pi}^k_\ell: L^2\Lambda^k(\Omega)\rightarrow V_\ell^k$ is a linear operator for each $k$, $\bar{\pi}_\ell$ commutes with $d$, $\bar{\pi}^k_\ell|_{V_\ell^k}=\text{id}$, and $$\|\bar{\pi}^k_\ell\|:=\sup_{v\in L^2\Lambda^k(\Omega),\|v\|=1}{\|\bar{\pi}^k_\ell v\|}\lesssim1.$$ Built upon $\bar{\pi}_\ell$ and $\mathcal{K}_1, \mathcal{K}_2$, we immediately obtain a discrete version of Theorem \ref{rd} and compact operators which are crucial for proving quasi-orthogonality.
\begin{corollary}\label{disrd}
Let $\Omega$ be a bounded Lipschitz domain in $\mathbb{R}^n.$ For each index $k$, let $$i_1: H^1\Lambda^{k-1}(\Omega)\hookrightarrow L^2\Lambda^{k-1}(\Omega),\quad i_2: H^1\Lambda^{k}(\Omega)\hookrightarrow L^2\Lambda^{k}(\Omega)$$
denote the natural inclusions. Then $\bar{\mathcal{K}}^k_{1}=i_1\circ\mathcal{K}^k_1: H\Lambda^{k}(\Omega)\rightarrow L^2\Lambda^{k-1}(\Omega)$ and $\bar{\mathcal{K}}^k_2=i_2\circ\mathcal{K}^k_2: H\Lambda^{k}(\Omega)\rightarrow L^2\Lambda^{k}(\Omega)$ are compact operators. In addition, for $v\in V^k_\ell,$
\begin{equation*}
    v=d\bar{\pi}_\ell\bar{\mathcal{K}}^k_{1}v+\bar{\pi}_\ell\bar{\mathcal{K}}^k_{2}v.
\end{equation*}
\end{corollary}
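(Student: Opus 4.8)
The plan is to treat the two assertions separately, as each follows directly from a result already in hand. For the compactness claim, I would first invoke Theorem \ref{rd}: the operators $\mathcal{K}^k_1$ and $\mathcal{K}^k_2$ are bounded linear maps from $H\Lambda^k(\Omega)$ into $H^1\Lambda^{k-1}(\Omega)$ and $H^1\Lambda^{k}(\Omega)$, respectively. Since $\Omega$ is a bounded Lipschitz domain, the Rellich--Kondrachov theorem guarantees that the natural inclusions $i_1,i_2$ of $H^1$ into $L^2$ are compact. Because the composition of a bounded operator with a compact operator is compact, both $\bar{\mathcal{K}}^k_1=i_1\circ\mathcal{K}^k_1$ and $\bar{\mathcal{K}}^k_2=i_2\circ\mathcal{K}^k_2$ are compact as maps into $L^2$.

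For the discrete decomposition identity, I would start from the regular decomposition of Theorem \ref{rd}, namely $v=d\mathcal{K}^k_1v+\mathcal{K}^k_2v$, which holds in particular for $v\in V_\ell^k\subset H\Lambda^k(\Omega)$, and apply the smoothed projection $\bar{\pi}_\ell$ to both sides. The inclusions $i_1,i_2$ leave the underlying forms unchanged, so $\bar{\mathcal{K}}^k_1v=\mathcal{K}^k_1v$ and $\bar{\mathcal{K}}^k_2v=\mathcal{K}^k_2v$ as elements of $L^2$, on which $\bar{\pi}_\ell$ is defined. Using that $\bar{\pi}_\ell$ commutes with $d$, I would rewrite $\bar{\pi}_\ell^k(d\mathcal{K}^k_1v)=d\bar{\pi}_\ell^{k-1}\mathcal{K}^k_1v=d\bar{\pi}_\ell\bar{\mathcal{K}}^k_1v$, which gives $\bar{\pi}_\ell v=d\bar{\pi}_\ell\bar{\mathcal{K}}^k_1v+\bar{\pi}_\ell\bar{\mathcal{K}}^k_2v$. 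Finally, since $v\in V_\ell^k$ and $\bar{\pi}_\ell^k|_{V_\ell^k}=\text{id}$, the left-hand side equals $v$, yielding the claimed identity.

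This corollary is a formal consequence of the two ingredients, so I do not anticipate a genuine obstacle; the argument is just bookkeeping with the commuting and projection properties. The one point deserving emphasis is the choice of the \emph{smoothed} projection $\bar{\pi}_\ell$ rather than the $V$-bounded $\pi_\ell$: $\bar{\pi}_\ell$ is the projection defined on all of $L^2\Lambda^k(\Omega)$ and bounded there uniformly in $\ell$, which is precisely what makes the displayed identity meaningful after composing with $\bar{\mathcal{K}}^k_1,\bar{\mathcal{K}}^k_2$, and which the later quasi-orthogonality argument exploits when these compact operators act on weakly convergent sequences.
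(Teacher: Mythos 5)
Your proposal is correct and follows essentially the same route as the paper: compactness via Rellich--Kondrachov applied to the inclusions composed with the bounded operators $\mathcal{K}^k_1,\mathcal{K}^k_2$ from Theorem \ref{rd}, and the discrete identity by applying $\bar{\pi}_\ell$ to the regular decomposition, using the commutation $\bar{\pi}_\ell d = d\bar{\pi}_\ell$ and the projection property $\bar{\pi}_\ell v = v$ for $v\in V_\ell^k$. Your write-up merely spells out the bookkeeping the paper compresses into one sentence, and your closing remark on why the $L^2$-bounded smoothed projection $\bar{\pi}_\ell$ (rather than $\pi_\ell$) is the right tool is consistent with how the corollary is used later in the quasi-orthogonality argument.
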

\begin{proof}
Using the Rellich--Kondrachov lemma, the inclusions $i_1$ and $i_2$ are compact operators. Since $\mathcal{K}^k_1$ and $\mathcal{K}^k_2$ are bounded, the compositions $\bar{\mathcal{K}}^k_{1}=i_1\circ\mathcal{K}^k_1$, $\bar{\mathcal{K}}^k_2=i_2\circ\mathcal{K}^k_2$ are still compact operators.

For $v\in V_\ell^k$, $v=d\bar{\pi}_\ell\bar{\mathcal{K}}^k_{1}v+\bar{\pi}_\ell\bar{\mathcal{K}}^k_{2}v$ directly follows from Theorem \ref{rd}, $v=\bar{\pi}_\ell v$ and that $\bar{\pi}_\ell$ commutes with the exterior derivative $d$.
\end{proof}
\subsection{Error indicator}
On the de Rham complex, we still use $\|\cdot\|_V$ to denote the  $H\Lambda^l(\Omega)$ norm for some $l$. Let $$H^{1}\Lambda^l(\Tl):=\{\omega\in L^{2}\Lambda^l(\Omega): \omega|_T\in H^{1}\Lambda^l(T)\text{ for all } T\in\Tl\}$$ and $\CS_\ell$ be the set of $(n-1)$-faces in $\Tl$. For each interior face $S\in\Sl$ and $\omega\in H^{1}\Lambda^l(\Tl)$, let $\lr{\tr\omega}|_S:=\tr_S(\omega|_{T_{1}})-\tr_S(\omega|_{T_{2}})$ denote the jump of $\tr\omega$ on $S$, where $T_{1}$ and $T_{2}$ are the two simplexes sharing $S$ as an $(n-1)$-face. On each boundary face $S$, $\lr{\tr\omega}:=\tr_S\omega$.

Let $\bar{\delta}_{ij}=0$ if $i=j$, otherwise $\bar{\delta}_{ij}=1$. For $1\leq k\leq n-1,$ let $f\in H^{1}\Lambda^{k}(\Tl)$ and
\begin{equation*}
\begin{aligned}
&{\CE}_{\ell}(T)=\bar{\delta}_{k1}h_T^2\|\delta\CC\sigma_\ell\|^2_T+h_T\|\lr{\tr\star\CC\sigma_\ell}\|^2_{\partial T}\\
&\quad+h_T^2\|\CC\sigma_\ell-\delta u_\ell\|_T^2+h_T\|\lr{\tr\star u_\ell}\|_{\partial T}^2+h_T\|\lr{\tr\star du_\ell}\|^2_{\partial T}\\
&\quad+h_T^2\|f-d\sigma_\ell-\delta du_\ell\|^2_{T}+h_T^2\|\delta(f-d\sigma_\ell)\|^2_T+h_T\|\lr{\tr\star(f-d\sigma_\ell)}\|^2_{\partial T}.
    \end{aligned}
\end{equation*}
When $k=n$, let
\begin{equation}\label{estimatorkn}
    \begin{aligned}
&{\CE}_{\ell}(T)=h_T^2\|\delta\CC\sigma_\ell\|^2_T+h_T\|\lr{\tr\star\CC\sigma_\ell}\|^2_{\partial T}+\|f-d\sigma_\ell\|^2_T\\
&\quad+h_T^2\|\CC\sigma_\ell-\delta u_\ell\|_T^2+h_T\|\lr{\tr\star u_\ell}\|_{\partial T}^2.
\end{aligned}
\end{equation}
The estimator $\CE_{\ell}=\sum_{T\in\Tl}\CE_{\ell}(T)$ controls the error $\|\sigma-\sigma_\ell\|^2_{V_\CC}+\|u-u_\ell\|^2_V$.
$\CE_{\ell}$ for the standard Hodge Laplace equation was first introduced in \cite{DH2014}. Let  $$\CE_\ell(\mathcal{M})=\sum_{T\in\mathcal{M}}\CE_\ell(T),\quad\mathcal{M}\subseteq\Tl.$$
Define the enriched collection of refinement elements:
\begin{equation*}
    \mathcal{R}_\ell=\{T\in\Tl: T\cap T^\prime\neq\emptyset\text{ for some }T^\prime\in\CT_{\ell}\backslash\CT_{\ell+1}\}.
\end{equation*}
The next theorem confirms the discrete and continuous reliablity of $\CE_\ell.$ It is noted that Demlow \cite{Demlow2017} used similar technique when deriving the discrete reliability of his AFEM for computing harmonic forms.
\begin{theorem}[discrete and continuous upper bounds]\label{disupper}
Assume $f\in H^1\Lambda^k(\Tl)$ with $1\leq k\leq n-1$ or $f\in L^2\Lambda^n(\Omega)$ when $k=n$. There exist a constant $C_{\emph{up}}$ depending solely on $C_\CC, \CT_0$ and $\Omega$, such that
\begin{align}
    \|\sigma_{\ell+1}-\sigma_\ell\|_{V_\CC}^2+\|u_{\ell+1}-u_\ell\|_V^2&\leq C_{\emph{up}}\CE_{\ell}({\mathcal{R}}_\ell),\label{disbd}\\
    \|\sigma-\sigma_{\ell}\|_{V_\CC}^2+\|u-u_{\ell}\|_V^2&\leq C_{\emph{up}}\CE_{\ell}.\label{ctsbd}
\end{align}
\end{theorem}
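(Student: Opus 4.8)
The plan is to derive both inequalities from a single residual estimate, the two cases differing only in the range of summation. Set
$$\Phi(\tau)=\ab{\CC\sigma_\ell}{\tau-\pi_\ell\tau}-\ab{d(\tau-\pi_\ell\tau)}{u_\ell},\qquad \Psi(v)=\ab{f-d\sigma_\ell}{v-\pi_\ell v}-\ab{du_\ell}{d(v-\pi_\ell v)}.$$
For \eqref{disbd} I would take $\tau\in V_{\ell+1}^{k-1}$, $v\in V_{\ell+1}^{k}$ and invoke Lemma \ref{absigmau}, which already bounds $\|\sigma_{\ell+1}-\sigma_\ell\|_{V_\CC}+\|u_{\ell+1}-u_\ell\|_V$ by the suprema of $\Phi$ and $\Psi$ over the respective unit balls. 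For \eqref{ctsbd} I would instead apply the continuous inf-sup estimate of Theorem \ref{infsup} to $(\sigma-\sigma_\ell,u-u_\ell)$: since $(\sigma,u)$ and $(\sigma_\ell,u_\ell)$ solve \eqref{GHL} and \eqref{DGHL}, testing the residual with $\pi_\ell\tau,\pi_\ell v\in V_\ell$ gives the Galerkin orthogonality $B(\sigma-\sigma_\ell,u-u_\ell;\pi_\ell\tau,\pi_\ell v)=0$, so $B(\sigma-\sigma_\ell,u-u_\ell;\tau,v)$ collapses to $\pm\Phi(\tau)\pm\Psi(v)$ exactly as in the proof of Lemma \ref{absigmau}. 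Both bounds then follow once $|\Phi(\tau)|,|\Psi(v)|\lesssim\CE_\ell(\mathcal{S})^{1/2}(\|\tau\|_{V_\CC}+\|v\|_V)$ with $\mathcal{S}=\Tl$ in the continuous case and $\mathcal{S}=\mathcal{R}_\ell$ in the discrete case.

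The core is the estimate of $\Phi$, with $\Psi$ entirely parallel (and degenerating to the crude bound $\|f-d\sigma_\ell\|\,\|v\|_V$ when $k=n$, which explains the unweighted term in \eqref{estimatorkn}). The obstruction is that $\tau-\pi_\ell\tau$ lies only in $H\Lambda^{k-1}(\Omega)$, so the $H^1$-approximation estimates \eqref{approxpi} cannot be applied to it directly. I would remove this with the regular decomposition of Theorem \ref{rd}: writing $\tau=d\mathcal{K}^{k-1}_1\tau+\mathcal{K}^{k-1}_2\tau$ and using that $\pi_\ell$ commutes with $d$, one obtains $\tau-\pi_\ell\tau=d\rho_1+\rho_2$ with $\rho_i=(\mathrm{id}-\pi_\ell)\mathcal{K}^{k-1}_i\tau$ now built from genuinely $H^1$ potentials satisfying $|\mathcal{K}^{k-1}_i\tau|_{H^1(\Omega)}\lesssim\|\tau\|_V$. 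Substituting and using $d\circ d=0$ turns $\Phi(\tau)$ into $\ab{\CC\sigma_\ell}{d\rho_1}+\ab{\CC\sigma_\ell-\delta u_\ell}{\rho_2}-(\text{face terms})$, and an element-wise integration by parts through \eqref{IP} produces precisely the volume residuals $\delta\CC\sigma_\ell$, $\CC\sigma_\ell-\delta u_\ell$ and the jumps $\lr{\tr\star\CC\sigma_\ell}$, $\lr{\tr\star u_\ell}$ in $\CE_\ell(T)$; the factor $\bar\delta_{k1}$ is automatic because $\mathcal{K}^{k-1}_1=0$ when $k=1$, so $\rho_1=0$. A weighted Cauchy--Schwarz per element and face, with \eqref{approxpi}/\eqref{Gapproxpi} converting $h_T^{-1}\|\rho_i\|_T$ and $h_T^{-1/2}\|\tr\rho_i\|_{\partial T}$ into $|\mathcal{K}^{k-1}_i\tau|_{H^1(D_T)}$, finishes the estimate after summing the patch seminorms by finite overlap and the boundedness of $\mathcal{K}^{k-1}_i$.

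For the continuous bound the summation runs over $\mathcal{S}=\Tl$ and the argument closes immediately. For the discrete bound the residuals must be confined to $\mathcal{R}_\ell$, which I would achieve through the local preservation property \eqref{local}: if $T\in\Tl$ has an unrefined patch $D_T$, then $\tau|_{D_T}\in V_\ell^{k-1}|_{D_T}$ and hence $\pi_\ell\tau=\tau$ on $T$, so $\tau-\pi_\ell\tau$, its exterior derivative, and its trace all vanish on $T$ and on every face separating $\mathcal{R}_\ell$ from its complement. Thus $\Phi(\tau)=\sum_{T\in\mathcal{R}_\ell}\big(\ab{\CC\sigma_\ell}{\tau-\pi_\ell\tau}_T-\ab{d(\tau-\pi_\ell\tau)}{u_\ell}_T\big)$ localizes already at the level of $\tau-\pi_\ell\tau$, and only the $\mathcal{R}_\ell$-indexed terms of $\CE_\ell$ survive.

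I expect the main obstacle to be exactly the interaction of these two ingredients in the discrete case. The $H^1$-approximation estimates are available only after the regular decomposition, whose potentials $\mathcal{K}^{k-1}_i\tau$ are globally supported and do not vanish outside $\mathcal{R}_\ell$, whereas the target $\CE_\ell(\mathcal{R}_\ell)$ is strictly local. One is forced to localize $\Phi$ first, at the level of $\tau-\pi_\ell\tau$, and only then to invoke the decomposition element by element on $\mathcal{R}_\ell$; the delicate point is the single-sided boundary faces of $\mathcal{R}_\ell$, where the vanishing of $\tr(\tau-\pi_\ell\tau)$ is precisely what restores the two-sided jump structure required by $\CE_\ell$. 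This boundary-face bookkeeping, rather than any single inequality, is where I would expect the argument to need the most care.
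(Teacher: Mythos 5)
Your route to the continuous bound \eqref{ctsbd} is sound but genuinely different from the paper's: you apply the continuous inf-sup condition of Theorem \ref{infsup} to $(\sigma-\sigma_\ell,u-u_\ell)$ together with Galerkin orthogonality and the decomposition $\rho_i=(\mathrm{id}-\pi_\ell)\mathcal{K}^{k-1}_i\tau$ (essentially the Demlow--Hirani argument), whereas the paper never uses the continuous inf-sup condition here: it proves \eqref{disbd} first and obtains \eqref{ctsbd} by passing to the limit along uniform refinements $\CT_{\ell+1}$ of $\Tl$, for which $\mathcal{R}_\ell=\Tl$ and $(\sigma_{\ell+1},u_{\ell+1})\rightarrow(\sigma,u)$. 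For the continuous case your argument closes, because the sum runs over all of $\Tl$, the potentials $\mathcal{K}^{k-1}_i\tau$ are globally $H^1$, and $\tr\rho_i$ is single-valued across every interior face, so the jumps assemble.

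The discrete bound \eqref{disbd} is where your proposal has a real gap, exactly at the point you flag as delicate. Only the sum $d\rho_1+\rho_2=\tau-\pi_\ell\tau$ vanishes outside $\bigcup_{T\in\mathcal{R}_\ell}T$; the pieces $\rho_1,\rho_2$ are globally supported and, after substitution and elementwise integration by parts, they enter the identity \emph{separately}. On a face $S$ shared by $T_1\in\mathcal{R}_\ell$ and $T_2\notin\mathcal{R}_\ell$, restricting the volume terms to $\mathcal{R}_\ell$ leaves the unpaired one-sided terms $\int_S\tr\rho_1\wedge\tr\star(\CC\sigma_\ell|_{T_1})$ and $\int_S\tr\rho_2\wedge\tr\star(u_\ell|_{T_1})$; the vanishing of $\tr(\tau-\pi_\ell\tau)$ on $S$ forces only $d(\tr\rho_1)+\tr\rho_2=0$, \emph{not} $\tr\rho_1=\tr\rho_2=0$, so these terms are neither zero nor controlled by the two-sided jumps $\lr{\tr\star\CC\sigma_\ell}$, $\lr{\tr\star u_\ell}$ appearing in $\CE_\ell$. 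Your proposed repair, invoking the decomposition ``element by element on $\mathcal{R}_\ell$,'' is not available: Theorem \ref{rd} is a global statement on the fixed Lipschitz domain $\Omega$, the regions $\bigcup_{T\in\mathcal{R}_\ell}T$ produced by adaptivity are neither uniformly Lipschitz nor connected in general, and elementwise potentials would have unrelated traces on shared faces, so the jump structure would again fail to assemble. The idea you are missing is the paper's two-level construction: decompose $\tau$ itself as $\tau=d\varphi_1+z_1$, then use $\tau=\pi_{\ell+1}\tau$ and the commutation $d\pi_{\ell+1}=\pi_{\ell+1}d$ to write
\begin{equation*}
\tau-\pi_\ell\tau=dE_{\varphi_1}+E_{z_1},\qquad E_{\varphi_1}=(\mathrm{id}-\pi_\ell)\pi_{\ell+1}\varphi_1,\quad E_{z_1}=(\mathrm{id}-\pi_\ell)\pi_{\ell+1}z_1.
\end{equation*}
Each piece is now a finite element form in $V_{\ell+1}$, so \eqref{local} applies to it \emph{individually}: on $T\notin\mathcal{R}_\ell$ the patch $D_T$ is unrefined, $V_{\ell+1}|_{D_T}=V_\ell|_{D_T}$, hence $E_{\varphi_1}=E_{z_1}=0$ there, their single-valued polynomial traces vanish on the interface faces (restoring the jump structure for free), and every volume and face term localizes to $\mathcal{R}_\ell$, with the weighted bounds coming from \eqref{approxpi}--\eqref{Gapproxpi} and $\|\varphi_1\|_{H^1(\Omega)}+\|z_1\|_{H^1(\Omega)}\lesssim\|\tau\|_V$. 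Without this insertion of $\pi_{\ell+1}$, your estimate yields $\CE_\ell(\Tl)$ on the right-hand side, not $\CE_\ell(\mathcal{R}_\ell)$, and the discrete reliability needed for the optimality analysis is lost.
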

\begin{proof}
For $\tau\in V^{k-1}_{\ell+1}$ with $\|\tau\|_V=1$, let $\tau=d\varphi_1+z_1$ , where $\varphi_1=\mathcal{K}^{k-1}_{1}\tau$  and $z_1=\mathcal{K}^{k-1}_{2}\tau$ in Theorem \ref{rd} satisfying the following bounds
$$\|\varphi_1\|_{H^1(\Omega)}+\|z_1\|_{H^1(\Omega)}\lesssim1.$$
Similarly, for $v\in V^k_{\ell+1}$ with $\|v\|_V=1$, we have $v=d\varphi_2+z_2$ with $$\|\varphi_2\|_{H^1(\Omega)}+\|z_2\|_{H^1(\Omega)}\lesssim1.$$
Thanks to the local property \eqref{local},
\begin{equation}\label{support}
    \text{supp}(\tau-\pi_\ell\tau)\subseteq\bigcup_{T\in\mathcal{R}_\ell}T,\quad\text{supp}(v-\pi_\ell v)\subseteq\bigcup_{T\in\mathcal{R}_\ell}T.
\end{equation}
Since $\pi_{\ell+1}$ is a cochain projection, we have $\tau=\pi_{\ell+1}\tau=d\pi_{\ell+1}\varphi_1+\pi_{\ell+1}z_1$ and $v=\pi_{\ell+1}v=d\pi_{\ell+1}\varphi_2+\pi_{\ell+1}z_2$. Hence
\begin{equation}\label{tautaul}
    \tau-\pi_{\ell}\tau=dE_{\varphi_1}+E_{z_1},\quad v-\pi_{\ell}v=dE_{\varphi_2}+E_{z_2},
\end{equation}
where $$E_{\varphi_i}=\pi_{\ell+1}\varphi_i-\pi_{\ell}\pi_{\ell+1}\varphi_i,\quad E_{z_i}=\pi_{\ell+1}z_i-\pi_{\ell}\pi_{\ell+1}z_i,\quad i=1, 2.$$
Here $E_{\varphi_1}=0$ when $k=1.$
Using \eqref{support}, \eqref{tautaul}, and the integration by parts formula \eqref{IP}, we have
\begin{equation*}
\begin{aligned}
&\ab{\CC\sigma_\ell}{\tau-\pi_\ell\tau}-\ab{d(\tau-\pi_\ell\tau)}{u_\ell}\\
&\quad=\sum_{T\in{\mathcal{R}}_\ell}\big(\ab{\CC\sigma_\ell}{dE_{\varphi_1}}_T+\ab{\CC\sigma_\ell}{E_{z_1}}_T-\ab{dE_{z_1}}{u_\ell}_T\big)\\
&\quad=\sum_{T\in{\mathcal{R}}_\ell}\big(\ab{\delta\CC\sigma_\ell}{E_{\varphi_1}}_T+\ab{\CC\sigma_\ell-\delta u_\ell}{E_{z_1}}_T\\
&\qquad+\int_{\partial T}\tr E_{\varphi_1}\wedge\tr\star\CC\sigma_\ell-\int_{\partial T}\tr E_{z_1}\wedge\tr\star u_\ell\big).
\end{aligned}
\end{equation*}
Let $\CS(\mathcal{R}_\ell)=\{S\in\Sl: S\subset\partial T\text{ for some }T\in\mathcal{R}_\ell\}$. Since $E_{\varphi_1}\in V_{\ell+1}^{k-2}$ and $E_{z_1}\in V_{\ell+1}^{k-1}$ are finite element differential forms, their traces $\tr E_{\varphi_1}$ and $\tr E_{z_1}$ are well-defined polynomials on each side $S\in\CS_\ell$. Therefore using the previous equation, we obtain
\begin{equation}\label{rhssigmau1}
\begin{aligned}
&\ab{\CC\sigma_\ell}{\tau-\pi_\ell\tau}-\ab{d(\tau-\pi_\ell\tau)}{u_\ell}\\
&\quad=\sum_{T\in{\mathcal{R}}_\ell}\big(\ab{\delta\CC\sigma_\ell}{E_{\varphi_1}}_T+\ab{\CC\sigma_\ell-\delta u_\ell}{E_{z_1}}_T\big)\\
&\qquad+\sum_{S\in\CS(\mathcal{R}_\ell)}\big(\int_S \tr E_{\varphi_1}\wedge\lr{\tr\star\CC\sigma_\ell}-\int_S\tr E_{z_1}\wedge\lr{\tr\star u_\ell}\big).
\end{aligned}
\end{equation}
By the same argument, it holds that for $1\leq k\leq n-1$,
\begin{equation}\label{rhssigmau2}
\begin{aligned}
&\ab{f-d\sigma_\ell}{v-\pi_\ell v}-\ab{du_\ell}{d(v-\pi_\ell v)}\\
&\quad=\sum_{T\in{\mathcal{R}}_\ell}\big(\ab{f-d\sigma_\ell}{dE_{\varphi_2}+E_{z_2}}_T-\ab{du_\ell}{dE_{z_2}}_T\big)\\
&\quad=\sum_{T\in{\mathcal{R}}_\ell}\big(\ab{\delta(f-d\sigma_\ell)}{E_{\varphi_2}}_T+\ab{f-d\sigma_\ell-\delta du_\ell}{E_{z_2}}_T\big)\\
&\qquad+\sum_{S\in\CS(\mathcal{R}_\ell)}\big(\int_S\tr E_{\varphi_2}\wedge\lr{\tr\star(f-d\sigma_\ell)}-\int_S\tr E_{z_2}\wedge\lr{\tr\star du_\ell}\big).
\end{aligned}
\end{equation}
Combining \eqref{rhssigmau1}, \eqref{rhssigmau2} with Lemma \ref{absigmau} and using the Cauchy--Schwarz inequality, we obtain for $1\leq k\leq n-1,$
\begin{equation*}
    \begin{aligned}
        &\|\sigma_{\ell+1}-\sigma_\ell\|_{V_\CC}+\|u_{\ell+1}-u_\ell\|_V\\
        &\quad\lesssim\CE_\ell(\mathcal{R}_\ell)^\frac{1}{2}\big(\sum_{i=1}^2h_T^{-2}\|E_{\varphi_i}\|_T^2+h_T^{-1}\|\tr E_{\varphi_i}\|^2_{\partial T}+h_T^{-2}\|E_{z_i}\|_T^2+h_T^{-1}\|\tr E_{z_i}\|^2_{\partial T}\big)^\frac{1}{2}.
    \end{aligned}
\end{equation*}
The discrete upper bound \eqref{disbd} then follows together with  the approximation property \eqref{Gapproxpi} and the bounds $\sum_{i=1}^2\|\varphi_i\|_{H^1(\Omega)}+\|z_i\|_{H^1(\Omega)}\lesssim1$.

When $k=n$, we simply have $du_\ell=0$, $\ab{f-d\sigma_\ell}{\pi_\ell v}=0$, and thus
\begin{equation}\label{rhssigmau3}
\ab{f-d\sigma_\ell}{v-\pi_\ell v}-\ab{du_\ell}{d(v-\pi_\ell v)}=\ab{f-d\sigma_\ell}{v}.
\end{equation}
Combining it with Lemma \ref{absigmau}, \eqref{rhssigmau1}, \eqref{Gapproxpi} still yields \eqref{disbd}.

Let $\CT_{\ell+1}$ be a uniform refinement of $\CT_\ell$. In this case $\CE(\mathcal{R}_\ell)=\CE_\ell.$ In addition, $(\sigma_{\ell+1},u_{\ell+1})$ converges to $(\sigma,u)$ in $V^{k-1}\times V^k$ as $\max_{T\in\CT_{\ell+1}}h_T\rightarrow0.$
Therefore passing to the limit in \eqref{disbd} yields the continuous upper bound \eqref{ctsbd}.
\end{proof}
For an integer $p\geq0,$ let $Q^p_T$ denote the $L^{2}$-projection onto $\mathcal{P}_p\Lambda^l(T)$ and $Q^p_{\partial T}$ the $L^{2}$-projection onto $\mathcal{P}_p\Lambda^l(\partial T)$ with appropriate $l$. Here $\mathcal{P}_p\Lambda^l(\partial T)$ is the space of $l$-forms on $\partial T$ whose restriction to each face of $T$ are polynomial $l$-forms of degree $\leq p$. Let $\Pi^p_T=\text{id}-Q^p_T$ and $\Pi^p_{\partial T}=\text{id}-Q^p_{\partial T}$. The efficiency of $\CE_{\ell}$ directly follows from the Verf\"urth bubble function technique used in \cite{DH2014} and the proof is skipped.
\begin{theorem}[efficiency]\label{eff}
Assume $\CC (V^{k-1}_\ell)\subseteq\mathcal{P}_{q}\Lambda^{k-1}(\CT_\ell)$ for some $q\geq0.$ There exists a constant $C_{\emph{low}}>0$ depending solely on $p, q, \CC, \CT_0, \Omega$, such that
\begin{equation*}
C_{\emph{low}}\CE_{\ell}\leq\|\sigma-\sigma_\ell\|^{2}_{V_\CC}+\|u-u_\ell\|_V^2+\osc_{\ell}^2(f),
\end{equation*}
where $\osc_{\ell}^2(f)=\sum_{T\in\Tl}\osc_{\ell}^2(f,T)$ with
\begin{align*}
&\osc^{2}_{\ell}(f,T)=\bar{\delta}_{kn}\{h_T^{2}\|\Pi^p_T( f-d\sigma_\ell-\delta du_\ell)\|_T^{2}\\
&\qquad+h_T^{2}\|\Pi^p_T\delta( f-d\sigma_\ell)\|_T^{2}+h_T\|\Pi^p_{\partial T}\lr{\tr\star (f-d\sigma_\ell)}\|_{\partial T}^{2}\}.
\end{align*}
\end{theorem}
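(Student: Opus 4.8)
The plan is to adapt the Verf\"urth bubble function technique to the de Rham complex, bounding each term of $\CE_\ell(T)$ locally by the natural-norm error on a small patch around $T$ plus data oscillation. Throughout I would exploit that the exact solution satisfies the compact form \eqref{compactHL}, namely $\CC\sigma=\delta u$ and $d\sigma+\delta du=f$, so that every residual entering $\CE_\ell$ is the discrete defect of an identity that vanishes for $(\sigma,u)$. The hypothesis $\CC(V_\ell^{k-1})\subseteq\mathcal{P}_q\Lambda^{k-1}(\Tl)$ guarantees that all residuals built from $\sigma_\ell,u_\ell$ are piecewise polynomial, so that inverse inequalities and the bubble-function norm equivalences apply, while the genuinely non-polynomial part of $f$ is exactly what the oscillation $\osc_\ell(f)$ absorbs.

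First I would treat the element (volume) residuals. For a term such as $h_T^2\|\CC\sigma_\ell-\delta u_\ell\|_T^2$, let $b_T$ be the interior bubble on $T$ and test against $w=b_T(\CC\sigma_\ell-\delta u_\ell)$, which vanishes on $\partial T$. Using $\|\cdot\|_T^2\lesssim\ab{\cdot}{b_T\cdot}_T$, the integration by parts formula \eqref{IP} to rewrite $\ab{\delta u_\ell}{w}_T=\ab{u_\ell}{dw}_T$ (the boundary term drops since $\tr w=0$), and the exact identity $\ab{\CC\sigma-\delta u}{w}_T=0$, one obtains
\begin{equation*}
\|\CC\sigma_\ell-\delta u_\ell\|_T^2\lesssim\ab{\CC(\sigma_\ell-\sigma)}{w}_T-\ab{dw}{u_\ell-u}_T\lesssim\big(\|\sigma-\sigma_\ell\|_T+h_T^{-1}\|u-u_\ell\|_T\big)\|w\|_T,
\end{equation*}
so that, after dividing by $\|w\|_T\lesssim\|\CC\sigma_\ell-\delta u_\ell\|_T$ and rescaling, $h_T^2\|\CC\sigma_\ell-\delta u_\ell\|_T^2\lesssim\|\sigma-\sigma_\ell\|_{V_\CC}^2+\|u-u_\ell\|_V^2$ on $T$. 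The second-order residuals $\delta\CC\sigma_\ell$ and $\delta(f-d\sigma_\ell)$ are handled the same way, now testing with $w=b_T\delta\CC\sigma_\ell$ and using that $\ab{\CC\sigma}{dw}_T=0$ resp.\ $\ab{f-d\sigma}{dw}_T=0$ for compactly supported $w$ (these encode $\delta\CC\sigma=\delta\delta u=0$ and $\delta(f-d\sigma)=\delta\delta du=0$); here the inverse estimate $\|dw\|_T\lesssim h_T^{-1}\|w\|_T$ produces the correct power of $h_T$, and the factor $\bar{\delta}_{k1}$ reflects that $\delta\CC\sigma_\ell$ lives in $\Lambda^{k-2}=\{0\}$ when $k=1$. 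The residual $f-d\sigma_\ell-\delta du_\ell$ is treated identically after replacing $f$ by $Q^p_Tf$, the mismatch being exactly $\osc_\ell(f,T)$.

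Next I would treat the face (jump) terms. For a jump such as $h_T\|\lr{\tr\star u_\ell}\|_{\partial T}^2$, let $S$ be an interior face shared by $T_1,T_2$, let $b_S$ be the face bubble supported on $\omega_S=T_1\cup T_2$, and extend the polynomial $\lr{\tr\star u_\ell}$ into $\omega_S$. Testing the jump against $b_S$ times this extension, integrating by parts over $\omega_S$ via \eqref{IP}, and using that $\tr\star u$ is single-valued across $S$ for the exact solution (because $u\in D(\delta)=\mathring{H}^{*}\Lambda^{k}(\Omega)$ by Theorem \ref{adjoint}), the boundary contribution reproduces $\|\lr{\tr\star u_\ell}\|_S^2$ while the volume contributions are controlled by the error on $\omega_S$ together with the companion element residual $\CC\sigma_\ell-\delta u_\ell$ already estimated above. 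The remaining jumps $\lr{\tr\star\CC\sigma_\ell}$, $\lr{\tr\star du_\ell}$ and $\lr{\tr\star(f-d\sigma_\ell)}$ are handled identically, each paired through \eqref{IP} with its companion element residual $\delta\CC\sigma_\ell$, $f-d\sigma_\ell-\delta du_\ell$, and $\delta(f-d\sigma_\ell)$, respectively; on boundary faces the homogeneous conditions $\tr\star u=\tr\star du=0$ implicit in \eqref{compactHL} replace single-valuedness. Summing the local bounds over $T\in\Tl$ and using the finite overlap of the patches $\omega_S$ and $D_T$ yields $C_{\mathrm{low}}\CE_\ell\leq\|\sigma-\sigma_\ell\|_{V_\CC}^2+\|u-u_\ell\|_V^2+\osc_\ell^2(f)$; the case $k=n$ with estimator \eqref{estimatorkn} follows from the same arguments, simplified by $du_\ell=0$.

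The main obstacle is the interplay between the coderivative $\delta$, which appears in every residual, and the $d$-based $V$-norm in which the error is measured: unlike the scalar case, $\|\delta(u-u_\ell)\|$ is not directly bounded by $\|u-u_\ell\|_V$, so the bubble argument must convert each $\delta$-residual into a $d$-tested quantity through \eqref{IP} before the error can be introduced. The second-order residuals $\delta\CC\sigma_\ell$ and $\delta(f-d\sigma_\ell)$, which require a double application of this conversion and must be paired correctly with their companion jump terms so that the powers of $h_T$ coming from the inverse inequalities match, are the most delicate point; the polynomiality hypothesis on $\CC(V_\ell^{k-1})$ is precisely what makes these inverse inequalities legitimate.
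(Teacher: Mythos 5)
Your proposal is correct and takes essentially the same approach as the paper: the paper in fact skips this proof entirely, stating that efficiency ``directly follows from the Verf\"urth bubble function technique used in \cite{DH2014},'' and your sketch is a faithful reconstruction of exactly that technique---interior and face bubbles, conversion of every $\delta$-residual into a $d$-tested quantity via \eqref{IP}, the exact identities and boundary conditions encoded in \eqref{compactHL}, inverse estimates justified by the polynomiality hypothesis on $\CC(V_\ell^{k-1})$, and oscillation absorbing the non-polynomial part of the data. The only cosmetic imprecision is that the oscillation terms in the theorem apply $\Pi^p_T$ and $\Pi^p_{\partial T}$ to the full residuals $f-d\sigma_\ell-\delta du_\ell$, $\delta(f-d\sigma_\ell)$, $\lr{\tr\star(f-d\sigma_\ell)}$ rather than to $f$ alone, which is what the standard splitting of a residual $R$ into $Q^p_TR+\Pi^p_TR$ inside the bubble argument naturally produces.
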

From the definitions of $\CE_\ell$ and $\osc_\ell$, it can be observed that
\begin{equation}\label{dom}
    \osc_\ell(f)\leq\CE_\ell.
\end{equation}

\section{quasi-optimality}\label{secqo}
The adaptive algorithm \textsf{AMFEM} is based on the aforementioned
$\textsf{SOLVE}\rightarrow\textsf{ESTIMATE}\rightarrow\textsf{MARK}\rightarrow\textsf{REFINE}$ feedback loop. In the routine \textsf{REFINE}, the collection of marked elements are subdivided by the newest vertex bisection (\text{NVB}) based on tagged simplices and  then a subroutine $\textsf{COMPLETE}$ is used for removing newly created hanging nodes, see, e.g., \cite{Stevenson2008,BDD2004,Traxler1997,Maubach1995}. For simplicity of presentation, the error tolerance is set to be $0$ so that $\textsf{AMFEM}$ produces an infinite sequence $\{(\sigma_\ell,u_\ell,\Tl)\}_{\ell\geq0}$. In order to compute the estimators $\{\CE_\ell\}_{\ell\geq0}$, we assume that $f\in H^1\Lambda^k(\CT_0)$ when $1\leq k\leq n-1,$ that is, the discontinuity of $f$ is aligned with the initial mesh $\CT_0$.

\begin{algorithm}
[\textsf{AMFEM}] Input an initial mesh $\mathcal{T}_{0}$ and a marking parameter $\theta\in(0,1)$. Set $\ell=0$.
\begin{itemize}
\item[]\textsf{SOLVE}: Solve \eqref{DGHL} on $\mathcal{T}_{\ell}$ to obtain the finite element solution $(\sigma_{\ell},u_{\ell})$.
\item[]\textsf{ESTIMATE}: Compute error indicators $\{\CE_{\ell}(T)\}_{T\in\Tl}$ and $\CE_{\ell}=\sum_{T\in\Tl}\CE_\ell(T).$
If $\CE_{\ell}=0$, let $(\sigma_j,u_j)=(\sigma_{{\ell}},u_{{\ell}})$ and $\CT_j=\CT_{\ell}$ for all $j\geq{\ell}$; return.
\item[]\textsf{MARK}: Select a subset $\mathcal{M}_{\ell}$ of $\mathcal{T}_{\ell}$ with  ${\CE}_\ell(\mathcal{M}_{\ell})\geq\theta\CE_\ell.$
\item[]\textsf{REFINE}: Subdivide all elements in $\mathcal{M}_{\ell}$ by NVB and then remove newly created hanging nodes by \textsf{COMPLETE} to obtain the conforming mesh $\mathcal{T}_{\ell+1}$. Set $\ell=\ell+1$. Go to \textsf{SOLVE}.
\end{itemize}
\end{algorithm}

In the following  we use the subscripts $\ell$, $\ell+1$ to indicate quantities on nested meshes. Sometimes $\ell$ and $\ell + 1$ also refer to the levels in \textsf{AMFEM} and in such cases we will explicitly point this out so that no confusion arises from such notation.

\subsection{Contraction}
In this subsection, we prove the contraction of \textsf{AMFEM}. To this end, we first prove a weak convergence result for Petrov--Galerkin methods. Given Hilbert spaces $\mathcal{U}, \mathcal{V}$ equipped with norm $\vertiii{\cdot}$, a continuous bilinear form $\mathcal{B}: \mathcal{U}\times \mathcal{V}\rightarrow\mathbb{R}$ and a continuous linear functional $F: \mathcal{V}\rightarrow\mathbb{R},$
consider the variational formulation:  Find $U\in\mathcal{U}$ such that
\begin{equation*}
    \mathcal{B}(U,V)=F(V),\quad V\in\mathcal{V}.
\end{equation*}
Given subspaces $\mathcal{U}_\ell\subseteq\mathcal{U}$ and $\mathcal{V}_\ell\subseteq\mathcal{V}$, the abstract Petrov--Galerkin method seeks $U_\ell\in\mathcal{U}_\ell$ such that
\begin{equation}\label{PG}
    \mathcal{B}(U_\ell,V)=F(V),\quad V\in\mathcal{V}_\ell.
\end{equation}
The method \eqref{PG} is well-posed provided
\begin{align*}
    &\inf_{W\in\mathcal{U}_\ell,\vertiii{W}=1}\sup_{V\in\mathcal{V}_\ell,\vertiii{V}=1}\mathcal{B}(W,V)\\
    &\quad=\inf_{V\in\mathcal{V}_\ell,\vertiii{V}=1}\sup_{W\in\mathcal{U}_\ell,\vertiii{W}=1}\mathcal{B}(W,V)\ge\beta>0,
\end{align*}
where $\beta$ is a constant independent of $\ell.$
We say $x_\ell\rightharpoonup x$ in $\mathcal{U}$ if $\{x_\ell\}_{\ell\geq0}$ weakly converges to $x$ in $\mathcal{U}$ as $\ell\rightarrow\infty$. The next theorem shows that the normalized error of \eqref{PG} always weakly converges to $0.$
\begin{theorem}[weak convergence]\label{weakPG}
Let $\{\mathcal{U}_\ell\}_{\ell\geq0}$ and $\{\mathcal{V}_\ell\}_{\ell\geq0}$ be sequences of subspaces of $\mathcal{U}$ and $\mathcal{V}$, respectively. Assume $\mathcal{U}_\ell\subseteq\mathcal{U}_{\ell+1}$ and $\mathcal{V}_\ell\subseteq\mathcal{V}_{\ell+1}$ for all $\ell\geq0$. Let {$U_\ell$ be the solution of \eqref{PG} and}
\begin{equation*}
    \xi_\ell=
    \left\{\begin{aligned}
        &\frac{U_{\ell+1}-U_\ell}{\vertiii{U_{\ell+1}-U_\ell}}&& \text{if } \vertiii{U_{\ell+1}-U_\ell}\neq0,\\
        &0&&\text{if }\vertiii{U_{\ell+1}-U_\ell}=0.
    \end{aligned}\right.
\end{equation*}
We have
\begin{align*}
    \xi_\ell\rightharpoonup0\text{ in }\mathcal{U}.
\end{align*}
\end{theorem}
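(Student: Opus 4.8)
The plan is to combine the Galerkin orthogonality satisfied by the increments $U_{\ell+1}-U_\ell$ with a weak compactness argument in the Hilbert space $\mathcal{U}$, and to close the estimate by transferring the uniform discrete inf-sup condition to the closure of the nested spaces. First I would record that, since $\mathcal{V}_\ell\subseteq\mathcal{V}_{\ell+1}$, subtracting the two equations \eqref{PG} at levels $\ell$ and $\ell+1$ gives $\mathcal{B}(U_{\ell+1}-U_\ell,V)=0$ for all $V\in\mathcal{V}_\ell$, hence
\begin{equation*}
\mathcal{B}(\xi_\ell,V)=0,\quad V\in\mathcal{V}_\ell,
\end{equation*}
while by construction $\xi_\ell\in\mathcal{U}_{\ell+1}$ and $\vertiii{\xi_\ell}\le1$. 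Because $\mathcal{U}$ is a Hilbert space, the bounded sequence $\{\xi_\ell\}$ is weakly sequentially precompact, so it suffices to show that every weak subsequential limit vanishes: if every weakly convergent subsequence of a bounded sequence in a reflexive space has the same limit, then the whole sequence converges weakly to it (a short contradiction argument using weak sequential compactness of bounded sets).

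So let $\xi_{\ell_j}\rightharpoonup\xi$ along a subsequence. Set $\mathcal{U}_\infty=\overline{\bigcup_\ell\mathcal{U}_\ell}$ and $\mathcal{V}_\infty=\overline{\bigcup_\ell\mathcal{V}_\ell}$, with closures taken in $\mathcal{U}$ and $\mathcal{V}$. Since $\xi_{\ell_j}\in\mathcal{U}_{\ell_j+1}\subseteq\mathcal{U}_\infty$ and $\mathcal{U}_\infty$ is a closed subspace, hence weakly closed, the limit satisfies $\xi\in\mathcal{U}_\infty$. I would then pass to the limit in the orthogonality: fixing $V\in\mathcal{V}_m$ for some $m$, for all $\ell_j\ge m$ the nesting gives $V\in\mathcal{V}_{\ell_j}$ and thus $\mathcal{B}(\xi_{\ell_j},V)=0$; since $W\mapsto\mathcal{B}(W,V)$ is a continuous linear functional on $\mathcal{U}$, weak convergence yields $\mathcal{B}(\xi,V)=0$. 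As $V\in\mathcal{V}_m$ and $m$ are arbitrary, continuity of $\mathcal{B}$ extends this to
\begin{equation*}
\mathcal{B}(\xi,V)=0,\quad V\in\mathcal{V}_\infty.
\end{equation*}

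It remains to deduce $\xi=0$, and the main obstacle is that the inf-sup hypothesis is posed only on the pairs $(\mathcal{U}_\ell,\mathcal{V}_\ell)$, so I must first transfer it to the limiting pair $(\mathcal{U}_\infty,\mathcal{V}_\infty)$ with the same constant $\beta$. Writing $C_{\mathcal{B}}$ for the continuity constant of $\mathcal{B}$, I would prove that for every $W\in\mathcal{U}_\infty$,
\begin{equation*}
\sup_{V\in\mathcal{V}_\infty,\ \vertiii{V}=1}\mathcal{B}(W,V)\ge\beta\vertiii{W}.
\end{equation*}
Given such a $W\ne0$, choose $W_m\in\mathcal{U}_{k_m}$ with $W_m\to W$; the discrete inf-sup at level $k_m$ furnishes $V_m\in\mathcal{V}_{k_m}\subseteq\mathcal{V}_\infty$ with $\vertiii{V_m}=1$ and $\mathcal{B}(W_m,V_m)\ge(\beta-\tfrac1m)\vertiii{W_m}$, whence $\mathcal{B}(W,V_m)\ge(\beta-\tfrac1m)\vertiii{W_m}-C_{\mathcal{B}}\vertiii{W-W_m}$; letting $m\to\infty$ and using $\vertiii{W_m}\to\vertiii{W}$ gives the claim, and the case $W=0$ is trivial. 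Applying this with $W=\xi\in\mathcal{U}_\infty$, together with $\mathcal{B}(\xi,\cdot)\equiv0$ on $\mathcal{V}_\infty$, forces $\vertiii{\xi}=0$. Thus every weak subsequential limit of $\{\xi_\ell\}$ is $0$, and by the reduction of the first paragraph $\xi_\ell\rightharpoonup0$ in $\mathcal{U}$, as claimed. I expect the transfer of the inf-sup condition to $(\mathcal{U}_\infty,\mathcal{V}_\infty)$ to be the delicate point, since it is exactly here that the uniformity of $\beta$ in $\ell$ and the density of $\bigcup_\ell\mathcal{U}_\ell$ in $\mathcal{U}_\infty$ are used in tandem.
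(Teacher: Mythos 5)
Your proposal is correct and follows essentially the same route as the paper's proof: extract a weakly convergent subsequence from the bounded sequence $\{\xi_\ell\}$, pass the Galerkin orthogonality to the weak limit using continuity of $\mathcal{B}(\cdot,V)$, locate the limit in $\mathcal{U}_\infty$, and annihilate it with an inf-sup condition on the limiting pair $(\mathcal{U}_\infty,\mathcal{V}_\infty)$, finishing with the standard subsequence-of-subsequence argument. The only substantive difference is the treatment of the limiting inf-sup condition: the paper cites it from Equation (4.5) of Morin--Siebert--Veeser \cite{MSV2008}, whereas you prove the one direction actually needed, namely $\sup_{V\in\mathcal{V}_\infty,\,\vertiii{V}=1}\mathcal{B}(W,V)\ge\beta\vertiii{W}$ for $W\in\mathcal{U}_\infty$, by approximating $W$ from $\bigcup_\ell\mathcal{U}_\ell$ and exploiting the uniformity of $\beta$ in $\ell$; this density argument is sound, makes the proof self-contained, and correctly observes that the reverse direction of the inf-sup condition on the limit spaces (which requires more work) is never used in concluding $\vertiii{\bar{\xi}}=0$.
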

\begin{proof}
Since $\vertiii{\xi_\ell}\leq1$, there exists a weakly convergent subsequence $\{\xi_{\ell_j}\}_{j\geq0}$:
\begin{equation*}
    \xi_{\ell_j}\rightharpoonup\bar{\xi}\text{ in }\mathcal{U}.
\end{equation*}
For any $\ell\geq0$ and $V_\ell\in\mathcal{V}_\ell$, it follows from the continuity of $\mathcal{B}$ that $\mathcal{B}(\cdot,V_\ell)$ is a continuous linear functional on $\mathcal{U}.$ Hence using the weak convergence and the Galerkin orthogonality
\begin{align*}
    \mathcal{B}(U_{\ell_{j+1}}-U_{\ell_j},V_\ell)=0\text{ when }\ell_j\geq\ell,
\end{align*}
we have
\begin{equation}\label{Bxibar1}
    \mathcal{B}(\bar{\xi},V_\ell)=\lim_{j\rightarrow\infty}\mathcal{B}({\xi}_{\ell_j},V_\ell)=0\text{ for all }V_\ell\in\mathcal{V}_\ell\text{ and }\ell\geq0.
\end{equation}
Given $\mathcal{W}\subseteq\mathcal{U}$ (resp.~$\mathcal{V}$), let $\overline{\mathcal{W}}\subseteq\mathcal{U}$ (resp.~$\mathcal{V}$) denote the closed subspace spanned by $\mathcal{W}.$ We consider
$$\mathcal{U}_\infty=\overline{\bigcup_{\ell\geq0}\mathcal{U}_\ell},\quad \mathcal{V}_\infty=\overline{\bigcup_{\ell\geq0}\mathcal{V}_\ell}.$$
Since $\mathcal{U}_\infty$ is closed, convex and $\{\xi_{\ell_j}\}_{j\geq0}\subset\mathcal{U}_\infty$,  the weak limit $\bar{\xi}$ is contained in $\mathcal{U}_\infty.$ The next discrete inf-sup condition is proved by Morin, Siebert, and Veeser, see Equation (4.5) of  \cite{MSV2008}; see also \cite{BV1984} for the symmetric case.
\begin{align}\label{infsupinfty}
    \inf_{W\in\mathcal{U}_\infty,\vertiii{W}=1}\sup_{V\in\mathcal{V}_\infty,\vertiii{V}=1}\mathcal{B}(W,V)\geq\beta>0.
\end{align}
On the other hand, it follows from \eqref{Bxibar1} and the continuity of $\mathcal{B}$ that
\begin{equation}\label{Bxibar}
    \mathcal{B}(\bar{\xi},V)=0\text{ for all }V\in\mathcal{V}_\infty.
\end{equation}
Therefore using  \eqref{infsupinfty} and \eqref{Bxibar}, we obtain
\begin{align*}
    \vertiii{\bar{\xi}}\leq\beta^{-1}\sup_{V\in\mathcal{V}_\infty,\vertiii{V}=1}\mathcal{B}(\bar{\xi},V)=0.
\end{align*}
We have indeed shown that each subsequence of $\{\xi_\ell\}$ has a further subsequence that weakly converges to $0$, which apparently implies $\xi_\ell\rightharpoonup0$ in $\mathcal{U}$.
\end{proof}
\begin{remark}
Let the sequence $\{\mathcal{U}_\ell\}_{\ell\geq0}$ be produced by some AFEM and
\begin{equation*}
    \zeta_\ell=
    \left\{\begin{aligned}
        &\frac{U-U_\ell}{\vertiii{U-U_\ell}}&&\text{if } \vertiii{U-U_\ell}\neq0,\\
        &0&&\text{if }\vertiii{U-U_\ell}=0.
    \end{aligned}\right.
\end{equation*}
Under very mild assumptions, it has been shown in \cite{MSV2008} that $\mathcal{U}_\infty=\mathcal{U}$, see also \cite{FFP2014} for AFEMs using D\"orfler marking. In this case, $\zeta_\ell\in\mathcal{U}_\infty$, and the weak convergence
$\zeta_\ell\rightharpoonup0\text{ in }\mathcal{U}$
follows from the same proof of Theorem \ref{weakPG}.
\end{remark}
For each $\ell\geq0$, let \begin{align*}
    &e_{d\sigma,\ell}=\|d(\sigma-\sigma_{\ell})\|^2,\quad e_{\sigma,\ell}=\|\sigma-\sigma_{\ell}\|_{V_\CC}^2,\quad e_{u,\ell}=\|u-u_{\ell}\|_V^2,\\ &E_{d\sigma,\ell}=\|d(\sigma_{\ell+1}-\sigma_{\ell})\|^2,\quad E_{\sigma,\ell}=\|\sigma_{\ell+1}-\sigma_{\ell}\|_{V_\CC}^2,\quad E_{u,\ell}=\|u_{\ell+1}-u_{\ell}\|_V^2,
\end{align*}
$e_\ell=e_{\sigma,\ell}+e_{u,\ell}$ and $ E_\ell=E_{\sigma,\ell}+E_{u,\ell}.$
Let $$E_\ell^{-\frac{1}{2}}(\sigma_{\ell+1}-\sigma_\ell):=0\text{ and }E_\ell^{-\frac{1}{2}}(u_{\ell+1}-u_\ell):=0\text{ if }E_\ell=0.$$ Theorem \ref{weakPG} immediately implies the following weak convergence result of \eqref{DGHL}.
\begin{corollary}\label{weakHL}
For $1\leq k\leq n,$
it holds that
\begin{align*}
    &E_\ell^{-\frac{1}{2}}(\sigma_{\ell+1}-\sigma_\ell)\rightharpoonup0\text{ in }H\Lambda^{k-1}(\Omega),\\
    &E_\ell^{-\frac{1}{2}}(u_{\ell+1}-u_\ell)\rightharpoonup0\text{ in }H\Lambda^{k}(\Omega).
\end{align*}
\end{corollary}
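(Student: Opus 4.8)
The plan is to read off Corollary \ref{weakHL} as a direct specialization of the abstract weak convergence result Theorem \ref{weakPG}, so the real work is only in matching \eqref{DGHL} to the template \eqref{PG} and then reconciling the two normalizations. First I would set $\mathcal{U}=\mathcal{V}=V^{k-1}\times V^k$ with the norm $\vertiii{(\tau,v)}=\|\tau\|_{V_\CC}+\|v\|_V$, take $\mathcal{B}\big((\sigma,u),(\tau,v)\big)=B(\sigma,u;\tau,v)$, and choose the nested subspaces $\mathcal{U}_\ell=\mathcal{V}_\ell=V_\ell^{k-1}\times V_\ell^k$. With these identifications \eqref{DGHL} is precisely the instance \eqref{PG}, its solution is $U_\ell=(\sigma_\ell,u_\ell)$, and the Galerkin orthogonality \eqref{error} is exactly the orthogonality invoked inside the proof of Theorem \ref{weakPG}. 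The nesting hypothesis $\mathcal{U}_\ell\subseteq\mathcal{U}_{\ell+1}$ is the standing assumption $V_\ell^{j}\subseteq V_{\ell+1}^{j}$.

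Next I would verify the one remaining hypothesis, the $\ell$-uniform inf-sup bound $\beta>0$. Theorem \ref{infsup} supplies a discrete inf-sup constant $C_{\mathrm{infsup},\ell}$ depending only on $c_P$, $C_\CC$, and $\|\pi_\ell\|_V$; since the Falk--Winther projection of Theorem \ref{cochainproj} is uniformly $V$-bounded, $\sup_{\ell}\|\pi_\ell\|_V<\infty$ and hence $\beta:=\inf_\ell C_{\mathrm{infsup},\ell}>0$. Because $\mathcal{U}_\ell=\mathcal{V}_\ell$ have equal finite dimension, the single-sided discrete bound of Theorem \ref{infsup} is equivalent to the two-sided inf-sup required in \eqref{PG}, both constants staying bounded below by a multiple of $\beta$. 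Theorem \ref{weakPG} then yields $\xi_\ell=(U_{\ell+1}-U_\ell)/\vertiii{U_{\ell+1}-U_\ell}\rightharpoonup0$ in $\mathcal{U}$. Since weak convergence in a product Hilbert space is equivalent to componentwise weak convergence, this says $(\sigma_{\ell+1}-\sigma_\ell)/\vertiii{U_{\ell+1}-U_\ell}\rightharpoonup0$ in $H\Lambda^{k-1}(\Omega)$ and $(u_{\ell+1}-u_\ell)/\vertiii{U_{\ell+1}-U_\ell}\rightharpoonup0$ in $H\Lambda^{k}(\Omega)$.

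Finally I would pass from the $\vertiii{\cdot}$-normalization to the $E_\ell^{-\frac{1}{2}}$ normalization of the statement. Writing $a=\|\sigma_{\ell+1}-\sigma_\ell\|_{V_\CC}$ and $b=\|u_{\ell+1}-u_\ell\|_V$, one has $\vertiii{U_{\ell+1}-U_\ell}=a+b$ and $E_\ell^{\frac{1}{2}}=\sqrt{a^2+b^2}$, so the elementary bounds $\sqrt{a^2+b^2}\le a+b\le\sqrt2\,\sqrt{a^2+b^2}$ give $1\le\vertiii{U_{\ell+1}-U_\ell}/E_\ell^{\frac{1}{2}}\le\sqrt2$ whenever $E_\ell>0$. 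Hence $E_\ell^{-\frac{1}{2}}(\sigma_{\ell+1}-\sigma_\ell)$ is the product of the bounded scalar $\vertiii{U_{\ell+1}-U_\ell}/E_\ell^{\frac{1}{2}}$ with the weakly null sequence $(\sigma_{\ell+1}-\sigma_\ell)/\vertiii{U_{\ell+1}-U_\ell}$, and multiplying a weakly null sequence by a bounded scalar sequence preserves weak nullity; the same holds for the $u$-component, while the degenerate case $E_\ell=0$ is handled by the convention preceding the corollary.

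I regard this as genuinely immediate, as the author indicates. The only two points needing care are the $\ell$-uniformity of the inf-sup constant, which rests entirely on the uniform $V$-boundedness of $\pi_\ell$, and the harmless normalization mismatch between $\vertiii{\cdot}^{-1}$ and $E_\ell^{-\frac{1}{2}}$ reconciled above; neither constitutes a substantive obstacle.
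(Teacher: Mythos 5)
Your proposal is correct and follows exactly the route the paper intends: the paper proves Corollary \ref{weakHL} simply by invoking Theorem \ref{weakPG} for the choice $\mathcal{U}_\ell=\mathcal{V}_\ell=V_\ell^{k-1}\times V_\ell^k$ with bilinear form $B$, which is precisely your setup. The points you elaborate --- the $\ell$-uniform inf-sup constant via the uniform $V$-boundedness of the Falk--Winther projection in Theorem \ref{infsup}, the equivalence of the two one-sided inf-sup conditions for square finite-dimensional systems, and the harmless discrepancy between the sum norm $\vertiii{\cdot}$ and the normalization $E_\ell^{-\frac{1}{2}}$ --- are exactly the details the paper leaves implicit in the word ``immediately,'' and you resolve them correctly.
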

The next lemma deals with error reduction on two nested meshes $\Tl\leq\CT_{\ell+1}.$
\begin{theorem}\label{relations}
For $\ell\geq0$ and $\varepsilon\in(0,1)$, it holds that
\begin{align}
     e_{d\sigma,\ell+1}&= e_{d\sigma,\ell}-E_{d\sigma,\ell},\label{qo1}\\
    (1-\varepsilon){e}_{\sigma,\ell+1}&\leq {e}_{\sigma,\ell}-(1-\varepsilon){E}_{\sigma,\ell}+C_{\varepsilon}E_{d\sigma,\ell}\label{qo2},
\end{align}
where $C_\varepsilon=\varepsilon^{-1}{C}_\sigma,$ and ${C}_\sigma$ depends only on $C_\CC, \CT_0$ and $\Omega.$
In addition, for any $\varepsilon>0$, there exists $\ell_0=\ell_0(\varepsilon)\in\mathbb{N}$, such that whenever $\ell\geq\ell_0,$
\begin{align}
    e_{u,\ell+1}\leq e_{u,\ell}-\frac{1}{2}E_{u,\ell}+\frac{1}{2}E_{\sigma,\ell}+\varepsilon e_{\ell+1}.\label{qo3}
\end{align}
\end{theorem}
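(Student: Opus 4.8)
My plan is to expand each squared error across the nested meshes $\Tl\le\CT_{\ell+1}$ into a (perturbed) Pythagorean identity and then control the resulting cross term. Throughout I use the level-$(\ell+1)$ error equations obtained by subtracting the discrete problem \eqref{DGHL} from the continuous problem \eqref{GHL}, namely $\ab{\CC(\sigma-\sigma_{\ell+1})}{\tau}=\ab{d\tau}{u-u_{\ell+1}}$ for $\tau\in V_{\ell+1}^{k-1}$ and $\ab{d(\sigma-\sigma_{\ell+1})}{v}+\ab{d(u-u_{\ell+1})}{dv}=0$ for $v\in V_{\ell+1}^{k}$, both legitimate since $V_{\ell+1}^{k-1}\subseteq V^{k-1}$ and $V_{\ell+1}^{k}\subseteq V^{k}$. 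For \eqref{qo1} I would test the second error equation with $v=d(\sigma_{\ell+1}-\sigma_\ell)\in V_{\ell+1}^{k}$; since $dd=0$ the term $\ab{d(u-u_{\ell+1})}{dv}$ drops and I obtain the exact orthogonality $\ab{d(\sigma-\sigma_{\ell+1})}{d(\sigma_{\ell+1}-\sigma_\ell)}=0$. Expanding $e_{d\sigma,\ell}=\|d(\sigma-\sigma_{\ell+1})+d(\sigma_{\ell+1}-\sigma_\ell)\|^2$ then yields \eqref{qo1} immediately.

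For \eqref{qo2} I would expand $\sigma-\sigma_\ell=(\sigma-\sigma_{\ell+1})+(\sigma_{\ell+1}-\sigma_\ell)$ in the $V_\CC$-inner product to get $e_{\sigma,\ell}=e_{\sigma,\ell+1}+E_{\sigma,\ell}+2\ab{\sigma-\sigma_{\ell+1}}{\sigma_{\ell+1}-\sigma_\ell}_{V_\CC}$. The exterior-derivative part of this cross term is precisely the quantity shown to vanish for \eqref{qo1}, so only $\ab{\CC(\sigma-\sigma_{\ell+1})}{\sigma_{\ell+1}-\sigma_\ell}$ remains. Cauchy--Schwarz in the $\CC$-semi-inner product and Young's inequality bound twice this by $\varepsilon\|\CC^{1/2}(\sigma-\sigma_{\ell+1})\|^2+\varepsilon^{-1}\|\CC^{1/2}(\sigma_{\ell+1}-\sigma_\ell)\|^2$, the first factor being $\le\varepsilon e_{\sigma,\ell+1}$. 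The estimate closes once the $\CC$-seminorm of the flux increment is controlled by its exterior derivative, $\|\CC^{1/2}(\sigma_{\ell+1}-\sigma_\ell)\|^2\lesssim E_{d\sigma,\ell}$; this is the flux quasi-orthogonality of \cite{YL2019}, and rearranging with $C_\varepsilon=\varepsilon^{-1}C_\sigma$ produces \eqref{qo2}.

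Estimate \eqref{qo3} is the genuinely new one. I would expand $e_{u,\ell}=e_{u,\ell+1}+E_{u,\ell}+2\ab{u-u_{\ell+1}}{u_{\ell+1}-u_\ell}_V$, reducing the claim to $2|\ab{u-u_{\ell+1}}{u_{\ell+1}-u_\ell}_V|\le\tfrac12 E_{u,\ell}+\tfrac12 E_{\sigma,\ell}+\varepsilon e_{\ell+1}$ for $\ell\ge\ell_0$. Writing $w=u_{\ell+1}-u_\ell\in V_{\ell+1}^{k}$, I would first replace the exterior-derivative part of the $V$-cross term using the second error equation with $v=w$, turning $\ab{d(u-u_{\ell+1})}{dw}$ into $-\ab{d(\sigma-\sigma_{\ell+1})}{w}$. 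Then I would feed $w$ through the discrete regular decomposition of Corollary \ref{disrd}, $w=d\,\bar\pi_{\ell+1}\bar{\mathcal{K}}^k_1 w+\bar\pi_{\ell+1}\bar{\mathcal{K}}^k_2 w$. Substituting this into both surviving pairings splits the cross term into four pieces: the first error equation turns $\ab{u-u_{\ell+1}}{d\,\bar\pi_{\ell+1}\bar{\mathcal{K}}^k_1 w}$ into $\ab{\CC(\sigma-\sigma_{\ell+1})}{\bar\pi_{\ell+1}\bar{\mathcal{K}}^k_1 w}$; the pairing $\ab{d(\sigma-\sigma_{\ell+1})}{d\,\bar\pi_{\ell+1}\bar{\mathcal{K}}^k_1 w}$ vanishes because $dd=0$; and the remaining two pieces pair $u-u_{\ell+1}$ and $d(\sigma-\sigma_{\ell+1})$ against $\bar\pi_{\ell+1}\bar{\mathcal{K}}^k_2 w$.

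The crux, which I expect to be the main obstacle, is that every surviving piece is an $L^2$-pairing of an error factor---bounded by $e_{\ell+1}^{1/2}$ after using $\|u-u_{\ell+1}\|\le e_{\ell+1}^{1/2}$, $\|d(\sigma-\sigma_{\ell+1})\|\le e_{\ell+1}^{1/2}$, and $\|\CC^{1/2}\cdot\|\lesssim\|\cdot\|$ from the boundedness of $\CC$---against the factor $\bar\pi_{\ell+1}\bar{\mathcal{K}}^k_i w$. Using the $L^2$-boundedness $\|\bar\pi_{\ell+1}\|\lesssim1$ of the Christiansen--Winther projection, this factor is $\lesssim\|\bar{\mathcal{K}}^k_i w\|=E_\ell^{1/2}\,\omega_\ell$ with $\omega_\ell:=\sum_{i=1,2}\|\bar{\mathcal{K}}^k_i(E_\ell^{-1/2}w)\|$. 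The argument then hinges on combining Corollary \ref{weakHL}, which gives $E_\ell^{-1/2}w\rightharpoonup0$ in $H\Lambda^{k}(\Omega)$, with the compactness of $\bar{\mathcal{K}}^k_1,\bar{\mathcal{K}}^k_2$ from Corollary \ref{disrd}: a compact operator sends a weakly convergent sequence to a strongly convergent one, so $\omega_\ell\to0$. Hence the cross term is $\lesssim e_{\ell+1}^{1/2}E_\ell^{1/2}\omega_\ell$, and Young's inequality gives $2|\ab{u-u_{\ell+1}}{u_{\ell+1}-u_\ell}_V|\le C\omega_\ell e_{\ell+1}+C\omega_\ell E_\ell$. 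Since $E_\ell=E_{\sigma,\ell}+E_{u,\ell}$ and $\omega_\ell\to0$, for any $\varepsilon$ I can choose $\ell_0$ so that $C\omega_\ell\le\min(\varepsilon,\tfrac12)$ whenever $\ell\ge\ell_0$, which delivers exactly $\varepsilon e_{\ell+1}+\tfrac12 E_{\sigma,\ell}+\tfrac12 E_{u,\ell}$ and hence \eqref{qo3}. The subtle points are the bookkeeping that isolates the two exactly vanishing/converting contributions so that no uncontrolled $\|w\|_V$-sized remainder survives, and the reliance on whole-sequence (not merely subsequential) weak convergence to force $\omega_\ell\to0$.
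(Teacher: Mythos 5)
Your proofs of \eqref{qo1} and \eqref{qo3} are correct and follow essentially the paper's own route: \eqref{qo1} is the exact orthogonality $\ab{d(\sigma-\sigma_{\ell+1})}{d\tau}=0$ for $\tau\in V_{\ell+1}^{k-1}$, and for \eqref{qo3} you use precisely the paper's mechanism, namely the decomposition $u_{\ell+1}-u_\ell=d\bar{\pi}_{\ell+1}\bar{\mathcal{K}}^k_1w+\bar{\pi}_{\ell+1}\bar{\mathcal{K}}^k_2w$ of Corollary \ref{disrd}, conversion of the cross terms through the two error equations, the combination of the whole-sequence weak convergence of Corollary \ref{weakHL} with the compactness of $\bar{\mathcal{K}}^k_1,\bar{\mathcal{K}}^k_2$ and the $L^2$-bound on $\bar{\pi}_{\ell+1}$, and Young's inequality. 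The genuine gap is in \eqref{qo2}: your argument hinges on the inequality $\|\CC^{\frac{1}{2}}(\sigma_{\ell+1}-\sigma_\ell)\|^2\lesssim E_{d\sigma,\ell}$, and this inequality is false. It is also not what \cite{YL2019} supplies: Lemma 4.1 there \emph{is} the statement \eqref{qo2}, which the paper simply cites; it is not a pointwise domination of the $\CC$-seminorm of the flux increment by its exterior derivative.

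To see that your intermediate inequality fails, take $k=n$, $\CC=\text{id}$ (mixed Poisson) and $f\in\mathcal{P}_r\Lambda^n(\CT_0)$. Then $d\sigma_j=Q_{\CT_j}f=f$ for every $j$ (Remark \ref{remarkestreduction}), so $E_{d\sigma,\ell}=0$ for all $\ell$; on the other hand, testing \eqref{GHL} and \eqref{DGHL1} with discrete divergence-free fields shows that $\sigma_j$ is the $L^2$-best approximation of $\sigma$ in the affine set $\{\tau\in V_j^{n-1}:d\tau=f\}$, and under refinement this best approximation changes (it converges to $\sigma$, which generically lies outside $V_0^{n-1}$), so $\|\CC^{\frac{1}{2}}(\sigma_{\ell+1}-\sigma_\ell)\|=\|\sigma_{\ell+1}-\sigma_\ell\|>0$ for some $\ell$. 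Hence no constant $C_\sigma$ can rescue the step. The correct argument defers Cauchy--Schwarz: first rewrite the cross term with your first error equation ($\tau=\sigma_{\ell+1}-\sigma_\ell$) as $\ab{\CC(\sigma-\sigma_{\ell+1})}{\sigma_{\ell+1}-\sigma_\ell}=\ab{d(\sigma_{\ell+1}-\sigma_\ell)}{u-u_{\ell+1}}$; then choose a discrete potential $\rho\in V_{\ell+1}^{k-1}$ with $\rho\perp\mathfrak{Z}_{\ell+1}^{k-1}$ and $d\rho=d(\sigma_{\ell+1}-\sigma_\ell)$, so that the discrete Poincar\'e inequality gives $\|\rho\|_V\le c_P\|\pi_{\ell+1}\|_V E_{d\sigma,\ell}^{1/2}$; apply the same error equation once more with $\tau=\rho$ to get $\ab{d\rho}{u-u_{\ell+1}}=\ab{\CC(\sigma-\sigma_{\ell+1})}{\rho}\le\|\CC^{\frac{1}{2}}(\sigma-\sigma_{\ell+1})\|\,\|\CC^{\frac{1}{2}}\rho\|\lesssim\|\CC^{\frac{1}{2}}(\sigma-\sigma_{\ell+1})\|E_{d\sigma,\ell}^{1/2}$, using \eqref{equivCC}; Young's inequality then yields \eqref{qo2}. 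In other words, $E_{d\sigma,\ell}$ enters through a Poincar\'e bound on a potential of $d(\sigma_{\ell+1}-\sigma_\ell)$, not through any control of $\|\CC^{\frac{1}{2}}(\sigma_{\ell+1}-\sigma_\ell)\|$ itself, which is exactly what your councterexample-prone shortcut assumed.
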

\begin{proof}
The relation \eqref{qo1} follows from $\ab{d(\sigma-\sigma_{\ell+1})}{d(\sigma_\ell-\sigma_{\ell+1})}=0$; and the inequality \eqref{qo2} has been proved in Lemma 4.1 of \cite{YL2019}.

Using  Corollary \ref{disrd}, we have
\begin{align}\label{decomp}
    v_\ell:=E_\ell^{-\frac{1}{2}} (u_{\ell}-u_{\ell+1})=d\varphi_\ell+z_\ell,
\end{align} where $$\varphi_\ell=\bar{\pi}_{\ell+1}\bar{\mathcal{K}}^k_{1}v_\ell\in V_{\ell+1}^{k-1},\quad z_\ell=\bar{\pi}_{\ell+1}\bar{\mathcal{K}}^k_{2}v_\ell\in V_{\ell+1}^{k}.$$ On the other hand, it follows from Corollary \ref{weakHL} that $$v_\ell\rightharpoonup0\text{ in }H\Lambda^k(\Omega).$$ Therefore the compact operators $$\bar{\mathcal{K}}^k_{1}: H\Lambda^{k}(\Omega)\rightarrow L^2\Lambda^{k-1}(\Omega),\quad \bar{\mathcal{K}}^k_{2}: H\Lambda^{k}(\Omega)\rightarrow L^2\Lambda^{k}(\Omega)$$ produce strongly  convergent sequences in the $L^2$-norm:
\begin{align*}
    \|\bar{\mathcal{K}}^k_1v_\ell\|\rightarrow0\text{ and }\|\bar{\mathcal{K}}^k_2v_\ell\|\rightarrow0\text{ as }\ell\rightarrow\infty.
\end{align*}
Due to $\|\bar{\pi}_{\ell+1}\|\lesssim1$, we obtain
\begin{align*}
    \|\varphi_\ell\|\rightarrow0\text{ and }\|z_\ell\|\rightarrow0\text{ as }\ell\rightarrow\infty.
\end{align*}
In particular, for any $\varepsilon>0$, there exists $\ell_0=\ell_0(\varepsilon),$ such that
\begin{align}\label{small}
    \|\CC^\frac{1}{2}\varphi_\ell\|\leq\frac{\varepsilon^\frac{1}{2}}{4}\text{ and }\|z_\ell\|\leq\frac{\varepsilon^\frac{1}{2}}{4}\text{ whenever } \ell\geq\ell_0.
\end{align} Using \eqref{decomp}, \eqref{small}, and
\begin{align*}
    \ab{\CC(\sigma-\sigma_{\ell+1})}{\varphi_\ell}=\ab{d\varphi_\ell}{u-u_{\ell+1}},
\end{align*}we have for $\ell\geq\ell_0$,
\begin{equation}\label{uortho}
    \begin{aligned}
    &\ab{u-u_{\ell+1}}{u_{\ell}-u_{\ell+1}}=\ab{\CC(\sigma-\sigma_{\ell+1})}{\varphi_\ell}E_\ell^\frac{1}{2}+\ab{u-u_{\ell+1}}{z_\ell}E_\ell^\frac{1}{2}\\
    &\quad\leq\frac{\varepsilon^\frac{1}{2}}{4}\big(\|\CC^\frac{1}{2}(\sigma-\sigma_{\ell+1})\|+\|u-u_{\ell+1}\|\big)E_\ell^\frac{1}{2}\leq\frac{1}{8}E_\ell+\frac{\varepsilon}{4}e_{\ell+1}.
\end{aligned}
\end{equation}
Similarly, for $\ell\geq\ell_0,$
\begin{equation}\label{duortho}
    \begin{aligned}
    &\ab{d(u-u_{\ell+1})}{d(u_{\ell}-u_{\ell+1})}=-\ab{d(\sigma-\sigma_{\ell+1})}{z_\ell}E_\ell^\frac{1}{2}\\
    &\qquad\leq\frac{\varepsilon^\frac{1}{2}}{4}\|d(\sigma-\sigma_{\ell+1})\| E_\ell^\frac{1}{2}\leq\frac{1}{8}E_\ell+\frac{\varepsilon}{8}e_{d\sigma,\ell+1}.
\end{aligned}
\end{equation}
Combining \eqref{uortho}, \eqref{duortho} with
$$e_{u,\ell+1}=e_{u,\ell}-E_{u,\ell}+2\ab{u-u_{\ell+1}}{u_{\ell}-u_{\ell+1}}_V,$$
we obtain \eqref{qo3}. The proof is complete.
\end{proof}
The following estimator reduction is standard and essentially relying on the D\"orfler marking condition used in the routine \textsf{MARK}, see Corollary 3.4 of \cite{CKNS2008}.
\begin{lemma}[estimator reduction]\label{continuity}
Assume that $f\in H^1\Lambda^k(\CT_0)$ when $1\leq k\leq n-1,$ or $f\in \mathcal{P}_r\Lambda^n(\mathcal{T}_0)$ when $k=n.$ Let $\{\mathcal{E}_\ell\}_{\ell\ge0}$ be a sequence of estimators generated by \textsf{AMFEM}. Then
there exist constants $0<\zeta<1$ and $C_{\emph{re}}>0$ depending only on $\theta, n, \CT_0, C_\CC,  \Omega$, such that
\begin{equation*}
{\CE}_{\ell+1}\leq\zeta{\CE}_\ell+C_{\emph{re}}E_{\ell}.
\end{equation*}
\end{lemma}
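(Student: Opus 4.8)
The plan is to establish the estimator reduction in Lemma \ref{continuity} by decomposing $\CE_{\ell+1}$ over the refined triangulation $\CT_{\ell+1}$ into contributions from elements that are genuinely refined versus those that persist unchanged from $\CT_\ell$, and then exploit the strict reduction of mesh size $h_T$ on refined elements together with the D\"orfler marking property to produce a contraction factor. First I would write $\CE_{\ell+1} = \sum_{T'\in\CT_{\ell+1}}\CE_{\ell+1}(T')$ and split the outer sum: for each child $T'\subseteq T$ with $T\in\CT_\ell$, I would compare the indicator $\CE_{\ell+1}(T')$ against the parent indicator $\CE_\ell(T)$. The indicators share the same structural form (volume-weighted $L^2$ residuals plus face-weighted jump terms), so the key is that on any element marked for refinement, newest vertex bisection forces $h_{T'}^n = |T'| \leq \tfrac{1}{2}|T| = \tfrac12 h_T^n$, hence the powers of $h_{T'}$ appearing in $\CE_{\ell+1}(T')$ are strictly smaller than those in $\CE_\ell(T)$ by a fixed factor.

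Next I would separate the mesh-geometric reduction from the data-dependence on $(\sigma_{\ell+1},u_{\ell+1})$ versus $(\sigma_\ell,u_\ell)$. The standard device here is a Young-type inequality: for any $\delta>0$,
\begin{equation*}
\CE_{\ell+1}(T') \leq (1+\delta)\,\widetilde{\CE}_{\ell+1}(T') + (1+\delta^{-1})\,\text{osc-type difference},
\end{equation*}
where $\widetilde{\CE}_{\ell+1}(T')$ uses the \emph{old} solution $(\sigma_\ell,u_\ell)$ evaluated on the new mesh, and the remainder collects terms measuring $\|\sigma_{\ell+1}-\sigma_\ell\|_{V_\CC}$ and $\|u_{\ell+1}-u_\ell\|_V$ through inverse estimates on the finite element differences. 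The geometric term $\widetilde{\CE}_{\ell+1}$ then satisfies $\widetilde{\CE}_{\ell+1}(T') \leq 2^{-1/n}$-type contraction on marked elements and equals $\CE_\ell(T)$ on unrefined elements (here one uses $f\in H^1\Lambda^k(\CT_0)$ so that the data is resolved on $\CT_0$ and no oscillation is lost under refinement). Summing over the marked set $\mathcal{M}_\ell$, where $\CE_\ell(\mathcal{M}_\ell)\geq\theta\CE_\ell$ by D\"orfler marking, yields $\sum \widetilde{\CE}_{\ell+1} \leq \CE_\ell - c\,\theta\,\CE_\ell = (1-c\theta)\CE_\ell$ for some $c\in(0,1)$ depending on $n$ through the bisection reduction.

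The remainder term I would control using the inverse inequalities available on finite element spaces: the differences $\sigma_{\ell+1}-\sigma_\ell \in V_{\ell+1}^{k-1}$ and $u_{\ell+1}-u_\ell \in V_{\ell+1}^k$ are piecewise polynomial, so quantities like $h_T^2\|\delta\CC(\sigma_{\ell+1}-\sigma_\ell)\|_T^2$ and the jump contributions are bounded by $\|\sigma_{\ell+1}-\sigma_\ell\|_{V_\CC}^2 + \|u_{\ell+1}-u_\ell\|_V^2 = E_\ell$ up to shape-regularity constants. Absorbing the $(1+\delta)$ factor by choosing $\delta$ small enough that $(1+\delta)(1-c\theta)=:\zeta<1$, I would arrive at $\CE_{\ell+1}\leq\zeta\CE_\ell + C_{\text{re}}E_\ell$ with $\zeta$ and $C_{\text{re}}$ depending only on $\theta,n,\CT_0,C_\CC,\Omega$. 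The main obstacle I anticipate is the careful bookkeeping of the jump terms $\|\lr{\tr\star\,\cdot}\|_{\partial T}$ across faces shared between refined and unrefined elements: a face may be bisected on one side but not the other, so the Young splitting must be applied face-by-face with the correct $h$-weight, and one must verify that the oscillation $\osc_\ell(f)$ does not degrade since $f$ is aligned with $\CT_0$. This face-level accounting, rather than any deep new idea, is where the technical weight of the proof lies, which is presumably why the author defers to Corollary 3.4 of \cite{CKNS2008}.
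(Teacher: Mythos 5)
Your overall mechanism --- Young-type splitting of each indicator into the old solution evaluated on the new mesh plus a remainder in the polynomial differences, geometric contraction of the $h$-weights on bisected elements, inverse estimates to bound the remainder by $E_\ell$, and D\"orfler marking to convert elementwise contraction into global contraction --- is exactly the standard argument of Corollary 3.4 in \cite{CKNS2008}, which is all the paper itself invokes (it states the lemma without proof). For $1\leq k\leq n-1$ your reconstruction is sound: every term of $\CE_\ell(T)$ carries a factor $h_T^2$ or $h_T$, the assumption $f\in H^1\Lambda^k(\CT_0)$ guarantees $f\in H^1\Lambda^k(\Tl)$ for all $\ell$ (the meshes are nested) so the terms $\delta(f-d\sigma_\ell)$ and $\lr{\tr\star(f-d\sigma_\ell)}$ remain well defined, and the $f$-dependence cancels in the stability step because $f$ enters each residual linearly.

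There is, however, a genuine gap in your treatment of $k=n$. Your central claim --- that ``the powers of $h_{T'}$ appearing in $\CE_{\ell+1}(T')$ are strictly smaller than those in $\CE_\ell(T)$ by a fixed factor'' --- is false for the term $\|f-d\sigma_\ell\|_T^2$ in \eqref{estimatorkn}, which carries no power of $h_T$ at all: summing it over the children of a bisected element reproduces it exactly, with no reduction factor, so the key step $\widetilde{\CE}_{\ell+1}\leq(1-c\theta)\CE_\ell$ breaks down. This is precisely why the lemma assumes $f\in\mathcal{P}_r\Lambda^n(\CT_0)$ (rather than $f\in H^1\Lambda^n(\CT_0)$) when $k=n$: then $f\in\mathcal{P}_r\Lambda^n(\Tl)=V_\ell^n$ for every $\ell$, and since $du_\ell=0$ for top-degree forms, equation \eqref{DGHL2} forces $d\sigma_\ell=f$, so the unweighted term vanishes identically and the surviving terms all carry $h$-weights. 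Your proposal instead attributes the role of the data hypothesis to ``no oscillation being lost under refinement,'' quoting only the $k\leq n-1$ assumption $f\in H^1\Lambda^k(\CT_0)$; that misses the actual obstruction, which the paper spells out in Remark \ref{remarkestreduction} (``Otherwise $\|f-d\sigma_\ell\|_T$ in $\mathcal{E}_\ell$ is an obstacle against the construction of the standard reduction property''), and which also motivates the modified estimator $\widetilde{\CE}_\ell$ of Subsection \ref{mamfem}. With this one correction --- invoke $d\sigma_\ell=f$ to delete the unweighted term before running your argument --- your proof goes through.
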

\begin{remark}\label{remarkestreduction}
Recall in \eqref{pairs} that $V_0^n=\mathcal{P}^-_{r+1}\Lambda^n(\mathcal{T}_0)=\mathcal{P}_r\Lambda^n(\mathcal{T}_0)$, the space of $n$-forms with discontinuous piecewise polynomial coefficients w.r.t.~$\mathcal{T}_0$ of degree at most $r$. If $k=n$ and $f\in \mathcal{P}_r\Lambda^n(\mathcal{T}_0)$, we have $f\in\mathcal{P}_r\Lambda^n(\mathcal{T}_\ell)$ for all $\ell\geq0$, and thus $d\sigma_\ell=f$ by \eqref{DGHL2}. Hence the term $\|f-d\sigma_\ell\|_T$ in $\mathcal{E}_\ell$ vanishes,  see \eqref{estimatorkn}. Otherwise $\|f-d\sigma_\ell\|_T$ in $\mathcal{E}_\ell$ is an obstacle against the construction of the standard reduction property described  in Lemma \ref{continuity}.
\end{remark}

To prove the convergence of AFEMs for nonsymmetric $2^{nd}$ order elliptic problems, the authors in \cite{FFP2014} developed the estimator contraction technique based on the following-type quasi-orthogonality
\begin{align*}
    \vertiii{U-U_{\ell+1}}^2\leq\frac{1}{1-\varepsilon}\vertiii{U-U_\ell}^2-\vertiii{U_{\ell+1}-U_\ell}^2,\quad\ell\geq\underline{\ell}(\varepsilon).
\end{align*}
Adding \eqref{qo2} and \eqref{qo3} in Theorem \ref{relations}, we obtain a weaker quasi-orthogonality
\begin{align}\label{trueqo}
    (1-2\varepsilon)e_{j+1}\leq e_{j}-\frac{1}{4}E_{j}+C_\varepsilon E_{d\sigma,j}\quad\text{ for }j\geq\ell_0=\ell_0(\varepsilon),
\end{align}
whenever $0<\varepsilon\le\frac{1}{4}$. The remainder term $E_{d\sigma,j}$ is estimated using the exact Pythagorean identity \eqref{qo1} in the following analysis.

With the above preparations, we are able to prove the contraction of the estimator $\CE_\ell$ in \textsf{AMFEM} using the quasi-orthogonality result \eqref{trueqo}, the continuous upper bound in Theorem \ref{disupper}, and Lemma \ref{continuity}.
\begin{theorem}[estimator contraction]\label{contraction}
Let the assumption in Lemma \ref{continuity} hold and $\{\mathcal{E}_\ell\}_{\ell\ge0}$ be a sequence of estimators generated by \textsf{AMFEM}. There exist constants $0<\gamma<1$ and $C_{\emph{conv}}>0$ depending solely on $\zeta, C_{\emph{up}}, C_{\emph{re}}, C_\varepsilon$, and $\ell_0=\ell_0(\varepsilon)$ with $$\varepsilon=\min\left(\frac{1-\zeta}{16C_{\emph{up}}C_{\emph{re}}},\frac{1}{4}\right),$$ such that for all $\ell\geq0$ and $m\geq1$, it holds that
\begin{equation*}
\CE_{\ell+m}\leq C_{\emph{conv}}\gamma^m\CE_\ell.
\end{equation*}
\end{theorem}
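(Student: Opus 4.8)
The plan is to prove a one-step contraction of a suitably augmented quantity combining the estimator $\CE_j$, the total natural-norm error $e_j$, and the auxiliary quantity $e_{d\sigma,j}$, and then to iterate. Throughout I fix $\varepsilon=\min\!\left(\frac{1-\zeta}{16C_{\mathrm{up}}C_{\mathrm{re}}},\frac14\right)$ as in the statement, write $a=1-2\varepsilon\in[\tfrac12,1)$, and let $\ell_0=\ell_0(\varepsilon)$ be the level beyond which the quasi-orthogonality \eqref{trueqo} is valid. The three inputs I would use are Lemma \ref{continuity} (estimator reduction) in the form $\CE_{j+1}\le\zeta\CE_j+C_{\mathrm{re}}E_j$, the quasi-orthogonality \eqref{trueqo}, and the continuous upper bound \eqref{ctsbd}, together with the elementary facts $e_{d\sigma,j}\le e_{\sigma,j}\le e_j$ and $E_{d\sigma,j}\le E_{\sigma,j}\le E_j$ that follow from the definition of $\|\cdot\|_{V_\CC}$.

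First I would combine the estimator reduction with $4C_{\mathrm{re}}$ times \eqref{trueqo}. The choice of the weight $4C_{\mathrm{re}}$ is made precisely so that the one-step error terms $E_j$ cancel, leaving, for $j\ge\ell_0$,
\[\CE_{j+1}+4C_{\mathrm{re}}a\,e_{j+1}\le\zeta\CE_j+4C_{\mathrm{re}}e_j+4C_{\mathrm{re}}C_\varepsilon E_{d\sigma,j}.\]
Writing $4C_{\mathrm{re}}e_j=4C_{\mathrm{re}}a\,e_j+8\varepsilon C_{\mathrm{re}}e_j$ and bounding the last term by $8\varepsilon C_{\mathrm{re}}C_{\mathrm{up}}\CE_j$ via \eqref{ctsbd}, the definition of $\varepsilon$ forces $8\varepsilon C_{\mathrm{re}}C_{\mathrm{up}}\le\tfrac{1-\zeta}{2}$, so the $\CE_j$ coefficient becomes $\zeta'\le\tfrac{1+\zeta}{2}<1$. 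Setting $\Lambda_j=\CE_j+4C_{\mathrm{re}}a\,e_j$ and replacing $4C_{\mathrm{re}}a\,e_j$ by $\Lambda_j-\CE_j$ turns this into $\Lambda_{j+1}\le\Lambda_j-(1-\zeta')\CE_j+4C_{\mathrm{re}}C_\varepsilon E_{d\sigma,j}$; since $\CE_j\ge(1+4C_{\mathrm{re}}aC_{\mathrm{up}})^{-1}\Lambda_j$ by \eqref{ctsbd}, this becomes a genuine contraction $\Lambda_{j+1}\le\gamma_0\Lambda_j+\kappa E_{d\sigma,j}$ with $\gamma_0<1$ and $\kappa=4C_{\mathrm{re}}C_\varepsilon$.

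The heart of the argument, and the step I expect to be the main obstacle, is disposing of the remainder $\kappa E_{d\sigma,j}$, which is where the \emph{exact} Pythagorean identity \eqref{qo1} is essential. I would augment the contracting quantity to $\tilde\Lambda_j=\Lambda_j+\kappa e_{d\sigma,j}$ and use $E_{d\sigma,j}=e_{d\sigma,j}-e_{d\sigma,j+1}$ so that the $e_{d\sigma,j+1}$ contributions telescope exactly, yielding $\tilde\Lambda_{j+1}\le\gamma_0\Lambda_j+\kappa e_{d\sigma,j}=\tilde\Lambda_j-(1-\gamma_0)\Lambda_j$. Bounding $e_{d\sigma,j}\le C_{\mathrm{up}}\CE_j\le C_{\mathrm{up}}\Lambda_j$ to get $\Lambda_j\ge(1+\kappa C_{\mathrm{up}})^{-1}\tilde\Lambda_j$ then gives the clean one-step contraction $\tilde\Lambda_{j+1}\le\gamma\tilde\Lambda_j$ with an explicit $\gamma\in(0,1)$ for every $j\ge\ell_0$. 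It is worth emphasizing that an inexact, $\varepsilon$-perturbed Pythagorean relation would only make $E_{d\sigma,j}$ summable, not telescopable, so the exactness of \eqref{qo1} is what makes this absorption possible.

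Finally I would iterate and translate back. For $\ell\ge\ell_0$ one has $\tilde\Lambda_{\ell+m}\le\gamma^m\tilde\Lambda_\ell$, and since $\CE_{\ell+m}\le\tilde\Lambda_{\ell+m}$ while $\tilde\Lambda_\ell\lesssim\CE_\ell$ by \eqref{ctsbd}, the claimed bound follows on this range. For the finitely many levels $\ell<\ell_0$ I would use the crude uniform one-step growth $\CE_{j+1}\le(\zeta+C_{\mathrm{re}}C_{\mathrm{up}})\CE_j$, obtained from Lemma \ref{continuity} and $E_j\le C_{\mathrm{up}}\CE_j$ (Theorem \ref{disupper}), to bridge from $\ell$ to $\min(\ell+m,\ell_0)$; since $\ell_0=\ell_0(\varepsilon)$ is a fixed finite number, the resulting extra factor $(\zeta+C_{\mathrm{re}}C_{\mathrm{up}})^{\ell_0}\gamma^{-\ell_0}$ is absorbed into $C_{\mathrm{conv}}$, giving $\CE_{\ell+m}\le C_{\mathrm{conv}}\gamma^m\CE_\ell$ for all $\ell\ge0$ and $m\ge1$. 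The only genuinely delicate interplay is that of Steps two and three: the continuous upper bound \eqref{ctsbd} converts the stubborn non-contracting error term $e_j$ into an estimator term, while the exact orthogonality \eqref{qo1} lets the $E_{d\sigma}$ remainder be telescoped into $\tilde\Lambda$ rather than merely summed.
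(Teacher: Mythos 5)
Your proposal is correct, and it reaches the theorem by a genuinely different route than the paper. The paper follows the summability argument of \cite{FFP2014}: it sums the estimator reduction over $j=\ell+1,\dots,J$, pays a slack $\alpha=\frac{1-\zeta}{2}$ via the continuous upper bound \eqref{ctsbd}, controls the accumulated remainder $\sum_{j}(E_j-\beta e_j)$ by $(4-\beta+4C_\varepsilon)e_\ell$ using \eqref{trueqo} together with the telescoping of $e_j$ and, through the exact identity \eqref{qo1}, of $e_{d\sigma,j}$, and thereby obtains the tail bound $\sum_{j=\ell+1}^\infty\CE_j\le C_2\CE_\ell$ for all $\ell\ge0$ (the levels $\ell<\ell_0$ being covered by the finite maximum $C_{\text{sup}}$); geometric decay then follows from the elementary step $(1+C_2^{-1})\sum_{j\ge\ell+1}\CE_j\le\sum_{j\ge\ell}\CE_j$. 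You instead build the Lyapunov quantity $\tilde\Lambda_j=\CE_j+4C_{\text{re}}(1-2\varepsilon)e_j+4C_{\text{re}}C_\varepsilon e_{d\sigma,j}$ and prove a genuine one-step contraction $\tilde\Lambda_{j+1}\le\gamma\tilde\Lambda_j$ for $j\ge\ell_0$, in the spirit of the quasi-error contraction of \cite{CKNS2008}; the exactness of \eqref{qo1} is exploited to absorb $E_{d\sigma,j}$ into the contracting quantity rather than to telescope it inside a sum, and the pre-asymptotic levels $\ell<\ell_0$ are bridged by the growth bound $\CE_{j+1}\le(\zeta+C_{\text{re}}C_{\text{up}})\CE_j$, which correctly combines Lemma \ref{continuity} with the discrete upper bound \eqref{disbd}. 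I verified your cancellation of the $C_{\text{re}}E_j$ terms, the absorption $8\varepsilon C_{\text{re}}C_{\text{up}}\le\frac{1-\zeta}{2}$ forced by the stated $\varepsilon$, and the two norm-equivalence steps $\CE_j\ge(1+4C_{\text{re}}aC_{\text{up}})^{-1}\Lambda_j$ and $\Lambda_j\ge(1+\kappa C_{\text{up}})^{-1}\tilde\Lambda_j$; all are sound, and your constants carry exactly the dependencies claimed in the statement. What your route buys is a stronger intermediate result (monotone per-step contraction of $\tilde\Lambda_j$ with explicit $\gamma$, rather than only $R$-linear decay of $\CE_\ell$) and lighter index bookkeeping; what the paper's route buys is the tail-summability format that plugs directly into the optimality machinery of \cite{FFP2014} used in Theorem \ref{qopt} and Corollary \ref{qopt2}, though the geometric decay your argument delivers serves there equally well.
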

\begin{proof}
For any $J\geq\ell+1$ and $\alpha=\frac{1-\zeta}{2}$, it follows from Lemma \ref{continuity} and \eqref{ctsbd} that
\begin{align*}
    &\sum_{j=\ell+1}^J\CE_j\leq\sum_{j=\ell+1}^J\big(\zeta\CE_{j-1}+C_{\text{re}}E_{j-1}\big)\\
    &\quad\leq\sum_{j=\ell+1}^J\big\{(\zeta+\alpha)\CE_{j-1}+C_{\text{re}}(E_{j-1}-\alpha C_{\text{re}}^{-1}C_{\text{up}}^{-1}e_{j-1})\big\},
\end{align*}
and thus
\begin{equation}\label{total}
    (1-\zeta-\alpha)\sum_{j=\ell+1}^J\CE_j\leq(\zeta+\alpha)\CE_\ell+C_{\text{re}}\sum_{j=\ell}^{J-1}(E_{j}-\beta e_{j}),
\end{equation}
where $\beta=\alpha C_{\text{up}}^{-1}C^{-1}_{\text{re}}$. If the Pythagorean identity $e_{j+1}=e_j-E_j$ as a consequence of the exact orthogonality  holds, then we could exploit the simple estimate $\sum_{j=\ell}^{J-1}(E_{j}-\beta e_{j})\le\sum_{j=\ell}^{J-1}(e_{j}-e_{j+1})\le e_\ell.$ Unfortunately, such orthogonality does not hold in the case of  indefinite systems and different arguments are needed in the analysis to circumvent this.

In general, to control the sum on the right hand side of \eqref{total}, we use the quasi-orthogonality \eqref{trueqo} with $\varepsilon=\min(\frac{\beta}{8},\frac{1}{4})$ and \eqref{qo1} to obtain that for all $\ell\geq\ell_0,$
\begin{equation}\label{part2}
    \begin{aligned}
        &\sum_{j=\ell}^{J-1}\big(E_{j}-\beta e_{j}\big)\leq\sum_{j=\ell}^{J-1}\big(4e_{j}-4(1-2\varepsilon)e_{j+1}+4C_\varepsilon E_{d\sigma,j}-\beta e_{j}\big)\\
        &\qquad\leq(4-\beta)\sum_{j=\ell}^{J-1}\big(e_{j}-e_{j+1}\big)+4C_\varepsilon\sum_{j=\ell}^{J-1}\big(e_{d\sigma,j}-e_{d\sigma,j+1}\big)\\
        &\qquad\leq(4-\beta)e_\ell+4C_\varepsilon e_{d\sigma,\ell}\leq(4-\beta+4C_\varepsilon)e_\ell.
    \end{aligned}
\end{equation}
A combination of \eqref{total}, \eqref{part2}, and $e_\ell\leq C_{\text{up}}\CE_\ell$ shows that for $\ell\geq\ell_0,$
\begin{equation}\label{sum1}
    (1-\zeta-\alpha)\sum_{j=\ell+1}^J\CE_j\leq{C}_1\CE_\ell,
\end{equation}
where ${C}_1=\zeta+\alpha+(4-\beta+4C_\varepsilon)C_{\text{up}}C_{\text{re}}.$ For $\ell<\ell_0,$ $\CE_\ell=0$ implies that the algorithm \textsf{AMFEM} terminates at step $\ell$ and $\CE_{\ell+1}=\CE_{\ell+2}=\cdots=0.$
Hence we can simply take
$$C_{\text{sup}}:=\max_{0\leq\ell<\ell_0,\CE_\ell\neq0}\CE_{\ell}^{-1}\sum_{j=\ell+1}^{\ell_0}\CE_j<\infty$$
and obtain
\begin{equation}\label{sum2}
    \sum_{j=\ell+1}^{\ell_0}\CE_j\leq C_{\text{sup}}\CE_\ell.
\end{equation}
Let $C_2:=C_{\text{sup}}+(1-\zeta-\alpha)^{-1}{C}_1$. Using \eqref{sum1} and \eqref{sum2}, we have
\begin{equation}\label{sum3}
    \sum_{j=\ell+1}^\infty\CE_j\leq {C}_2\CE_\ell\text{ for all }\ell\geq0.
\end{equation}
Therefore,
\begin{equation}\label{sum4}
    (1+{C}_2^{-1})\sum_{j=\ell+1}^\infty\CE_j\leq\sum_{j=\ell+1}^\infty\CE_j+\CE_\ell=\sum_{j=\ell}^\infty\CE_j\text{ for all  }\ell\geq0.
\end{equation}
Using \eqref{sum4} and \eqref{sum3}, it holds for $m\geq1$ and $\ell\geq0$ that
\begin{align*}
    \CE_{\ell+m}\leq\sum_{j=\ell+m}^\infty\CE_j\leq(1+{C}_2^{-1})^{-m+1}\sum_{j=\ell+1}^\infty\CE_j\leq(1+{C}_2^{-1})^{-m+1}{C}_2\CE_\ell.
\end{align*}
Taking $\gamma=(1+{C}_2^{-1})^{-1}$ and $C_{\text{conv}}=(1+{C}_2^{-1}){C}_2$ completes the proof.
\end{proof}

It follows from \eqref{dom}, \eqref{ctsbd}, and Theorem \ref{eff} that
\begin{align}\label{equiv}
    \CE_\ell\approx\|\sigma-\sigma_{\ell}\|^2_{V_\CC}+\|u-u_{\ell}\|^2_V+\osc_\ell^2(f).
\end{align}
Therefore due to Theorem \ref{contraction}, there exists a constant $\widetilde{C}_{\text{conv}}$ depending only on $C_\CC, \CT_0, \Omega,$ and $C_{\text{conv}}$, such that for $\ell\geq0, m\geq1,$
\begin{align*}
    &\|\sigma-\sigma_{\ell+m}\|^2_{V_\CC}+\|u-u_{\ell+m}\|^2_V+\osc_{\ell+m}^2(f)\\
    &\quad\leq\widetilde{C}_{\text{conv}}\gamma^m\big(\|\sigma-\sigma_{\ell}\|^2_{V_\CC}+\|u-u_{\ell}\|^2_V+\osc_{\ell}^2(f)\big).
\end{align*}

\subsection{Quasi-optimality}
Let $\mathbb{T}$ denote the set of all conforming subtriangulations of $\CT_0$ created by NVB. For $\CT_{\ell}\in\mathbb{T},$ and  $\tau\in V_{\ell}^{k-1}, v\in V_{\ell}^k,$ the total error is
\begin{align*}
    E_{\ell}(\tau,v):=\|\sigma-\tau\|^2_{V_\CC}+\|u-v\|^2_V+\osc^2_\ell(f),
\end{align*}
For $s>0$, we make use of the approximation class
\begin{align*}
    \mathbb{A}_{s}&:=\{(\tau,v,g)\in V^{k-1}\times V^k\times W^k: |(\tau,v,g)|_{\mathbb{A}_s}<\infty\},
\end{align*}
where the semi-norm $|\cdot|_{\mathbb{A}_s}$ is defined as
\begin{align*}
    |(\sigma,u,f)|_{\mathbb{A}_s}:=\sup_{N>0}\{N^s\min_{\CT_\ell\in\mathbb{T},\#\Tl-\#\CT_0\leq N}\min_{(\tau,v)\in V_\ell^{k-1}\times V_\ell^k}E_{\ell}(\tau,v)^\frac{1}{2}\}.
\end{align*}
To specify the dependence of $\CE_\ell$ on  $\sigma_\ell, u_\ell$, let $\CE_\ell(T)=\CE_\ell(\sigma_\ell,u_\ell,T)$ and $\CE_\ell(\tau,v,T)$ be given by replacing $(\sigma_\ell,u_\ell)$ with $(\tau,v)$ in $\CE_\ell(T).$ The next estimator perturbation result is standard, see, e.g., Proposition 3.3 in \cite{CKNS2008}.
There exists a constant $C_\text{stab}$ depending only on $\CT_0$, such that for $\tau\in V_{\ell+1}^{k-1}, v\in V_{\ell+1}^{k}$ and $\varepsilon>0,$
\begin{equation*}
    \begin{aligned}
    &\sum_{T\in\Tl\cap\CT_{\ell+1}}\CE_\ell(\sigma_\ell,u_\ell,T)\leq(1+\varepsilon)\sum_{T\in\Tl\cap\CT_{\ell+1}}\CE_\ell(\tau,v,T)\\
    &\qquad+(1+\varepsilon^{-1})C_\text{stab} \big(\|\sigma_\ell-\tau\|_{V_\CC}^2+\|u_\ell-v\|_V^2\big).
\end{aligned}
\end{equation*}
The marking parameter $\theta$ is required to be below the threshold $$\theta_{*}:=\frac{1}{1+C_{\text{up}}C_\text{stab}},$$
see, e.g., Lemma 5.5 in \cite{FFP2014}.
To prove optimality of AFEMs, Stevenson \cite{Stevenson2007} assumed the collection of marked elements has minimal cardinality:
\begin{equation}\label{markcond}
    \text{ Routine \textsf{MARK} selects a subset $\mathcal{M}_{\ell}$}\text{ with minimal cardinality.}
\end{equation}
Recall that $\{\Tl\}_{\ell\geq0}$ produced by \textsf{AMFEM} is a sequence of meshes generated by \text{NVB}. Assuming a matching condition on the initial mesh $\CT_0$, it has been shown in \cite{Stevenson2008,BDD2004} that the accumulated cardinality of marked elements satisfies
\begin{equation}\label{card}
    \#\mathcal{T}_\ell-\#\mathcal{T}_0\lesssim\sum_{j=0}^{\ell-1}\mathcal{M}_j.
\end{equation}

Now using the contraction in Theorem \ref{contraction}, the lower bound in Theorem \ref{eff}, and the assumptions on \textsf{MARK} and the marking parameter, we obtain the quasi-optimal convergence rate of $\{\CE_\ell\}_{\ell\geq0}$, see Theorem 5.3 in \cite{FFP2014}.
\begin{theorem}[quasi-optimality]\label{qopt}
Let $\{(\sigma_\ell,u_\ell,\Tl)\}_{\ell\geq0}$ be a sequence of finite element solutions and meshes generated by \textsf{AMFEM}. Assume $(\sigma,u,f)\in\mathbb{A}_{s}$,  $0<\theta<\theta_{*}$, \eqref{markcond}, \eqref{card}, and the conditions in Theorem \ref{eff} and Lemma \ref{continuity} hold. Then there exists a constant $C_{\emph{opt}}$ depending only on $\theta, \theta_*, \CT_0, C_\CC, \Omega,$ and $s, \gamma, C_{\emph{conv}}$ such that
\begin{equation*}
\sqrt{\CE}_\ell\leq C_{\emph{opt}}|(\sigma,u,f)|_{\mathbb{A}_s}\big(
\#\CT_\ell-\#\CT_0\big)^{-s}.
\end{equation*}
\end{theorem}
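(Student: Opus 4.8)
The plan is to follow the by-now standard optimality framework of \cite{Stevenson2007,CKNS2008,FFP2014}, reducing the whole statement to the cardinality estimate
\begin{equation*}
\#\CT_\ell - \#\CT_0 \lesssim |(\sigma,u,f)|_{\mathbb{A}_s}^{1/s}\,\CE_\ell^{-1/(2s)},
\end{equation*}
from which the claim follows by raising to the power $s$ and rearranging, using the equivalence \eqref{equiv} to pass freely between $\CE_\ell$ and the total error. By the accumulation identity \eqref{card} it suffices to control $\sum_{j=0}^{\ell-1}\#\mathcal{M}_j$, so the heart of the matter is a per-level bound $\#\mathcal{M}_\ell \lesssim |(\sigma,u,f)|_{\mathbb{A}_s}^{1/s}\CE_\ell^{-1/(2s)}$, which I would establish through an \emph{optimal marking} lemma.

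For the optimal marking lemma, fix a parameter $\mu\in(0,1)$ to be chosen. Since $(\sigma,u,f)\in\mathbb{A}_s$, I would select a triangulation $\widehat\CT\in\mathbb{T}$ together with discrete functions whose squared total error is at most $\mu\CE_\ell$ and with $\#\widehat\CT-\#\CT_0\lesssim (\mu\CE_\ell)^{-1/(2s)}|(\sigma,u,f)|_{\mathbb{A}_s}^{1/s}$; passing to the overlay $\CT_\ell\oplus\widehat\CT$ preserves these bounds while refining $\CT_\ell$. Let $\mathcal{R}$ be the associated refinement set and $(\sigma_*,u_*)$ the solution of \eqref{DGHL} on the overlay. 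The discrete reliability \eqref{disbd} gives $\|\sigma_*-\sigma_\ell\|_{V_\CC}^2+\|u_*-u_\ell\|_V^2\le C_{\text{up}}\CE_\ell(\mathcal{R})$. On the unrefined elements $T\in\CT_\ell\setminus\mathcal{R}$ the local indicator is geometrically unchanged, so the estimator perturbation estimate (with constant $C_{\text{stab}}$) lets me compare $\CE_\ell(\sigma_\ell,u_\ell,\cdot)$ with the overlay estimator evaluated at $(\sigma_*,u_*)$; bounding the latter by the lower bound of Theorem \ref{eff} — hence by the total error on the overlay, which is $\lesssim\mu\CE_\ell$ thanks to the quasi-optimality of \eqref{DGHL} in Theorem \ref{infsup} — and the perturbation term by $C_{\text{stab}}C_{\text{up}}\CE_\ell(\mathcal{R})$, I obtain
\begin{equation*}
\CE_\ell-\CE_\ell(\mathcal{R}) \lesssim \mu\,\CE_\ell + C_{\text{stab}}C_{\text{up}}\,\CE_\ell(\mathcal{R}).
\end{equation*}
Because $\theta<\theta_*=(1+C_{\text{up}}C_{\text{stab}})^{-1}$, choosing $\mu$ small enough forces the D\"orfler condition $\CE_\ell(\mathcal{R})\ge\theta\CE_\ell$. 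Since $\mathcal{R}$ (up to a shape-regularity factor coming from the enrichment in $\mathcal{R}_\ell$) is then a valid D\"orfler set, the minimal-cardinality requirement \eqref{markcond} yields $\#\mathcal{M}_\ell\le\#\mathcal{R}\lesssim\#\widehat\CT-\#\CT_0\lesssim|(\sigma,u,f)|_{\mathbb{A}_s}^{1/s}\CE_\ell^{-1/(2s)}$.

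Finally, I would sum the per-level bound over $j=0,\dots,\ell-1$ and invoke the contraction of Theorem \ref{contraction} in the backward form $\CE_j\ge C_{\text{conv}}^{-1}\gamma^{-(\ell-j)}\CE_\ell$, which gives $\CE_j^{-1/(2s)}\lesssim\gamma^{(\ell-j)/(2s)}\CE_\ell^{-1/(2s)}$. The geometric series $\sum_{i\ge1}\gamma^{i/(2s)}$ converges to a constant, so \eqref{card} produces the desired cardinality estimate and hence the theorem after rearranging. The main obstacle is the optimal marking lemma, and within it the verification that the overlay refinement set satisfies the D\"orfler condition: this is precisely where the discrete reliability \eqref{disbd}, the estimator perturbation, and the threshold $\theta_*$ must be balanced against the small parameter $\mu$. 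The contraction-driven summation afterwards is routine.
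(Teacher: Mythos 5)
Your proposal is correct and takes essentially the same route as the paper: the paper's proof of Theorem \ref{qopt} is precisely an appeal to the standard optimality framework of \cite{FFP2014} (Theorem 5.3 there, together with Lemma 5.5 for the marking threshold), whose ingredients — the optimal-marking lemma built from discrete reliability \eqref{disbd}, the estimator stability with constant $C_\text{stab}$, the efficiency bound of Theorem \ref{eff}, the threshold $\theta_*$, minimal cardinality \eqref{markcond}, the overlay and closure estimate \eqref{card}, and the geometric summation driven by the contraction of Theorem \ref{contraction} — are exactly the steps you reconstruct. No gaps.
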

Using the equivalence \eqref{equiv} and Theorem \ref{qopt}, we obtain the quasi-optimal convergence rate of \textsf{AMFEM}, that is, there exists $\widetilde{C}_{\text{opt}}$ independent of $\ell$ such that
\begin{align*}
    \big(\|\sigma-\sigma_{\ell}\|^2_{V_\CC}+\|u-u_{\ell}\|^2_V+\osc_{\ell}^2(f)\big)^\frac{1}{2}\leq\widetilde{C}_{\text{opt}}|(\sigma,u,f)|_{\mathbb{A}_s}(\#\CT_\ell-\#\CT_0)^{-s}.
\end{align*}

\subsection{Modified adaptive algorithms}\label{mamfem}
In this subsection, we focus on \eqref{GHL} with $k=n$, i.e.,
\begin{equation}\label{nGHL}
\begin{aligned}
\ab{\CC\sigma}{\tau}-\ab{d\tau}{u}&=0,\quad&&\tau\in V^{n-1},\\
\ab{d\sigma}{v}&=\ab{f}{v},&& v\in V^n,
\end{aligned}
\end{equation}
where $f\in L^2\Lambda^n(\Omega)$. The corresponding mixed method \eqref{DGHL} reads
\begin{equation}\label{nDGHL}
\begin{aligned}
\ab{\CC\sigma_\ell}{\tau}-\ab{d\tau}{u_\ell}&=0,\quad&&\tau\in V_\ell^{n-1},\\
\ab{d\sigma_\ell}{v}&=\ab{f}{v},&& v\in V_\ell^n.
\end{aligned}
\end{equation}
{Let $Q_\CT$ denote the $L^2$-projection onto $\mathcal{P}_r\Lambda^n(\CT).$ It follows from the second equation in \eqref{nDGHL} and $V_\ell^n=\mathcal{P}_r\Lambda^n(\CT_\ell)$ that $d\sigma_\ell=Q_{\CT_\ell}f.$

One of the conditions in Lemma \ref{continuity} requires $f\in\mathcal{P}_r\Lambda^n(\mathcal{T}_0)$ and the estimator contraction in Theorem \ref{contraction} is not guaranteed if $f\in L^2\Lambda^n(\Omega)\backslash \mathcal{P}_r\Lambda^n(\mathcal{T}_0)$. To remedy the situation, one can assume that $f\in H^1\Lambda^n(\mathcal{T}_0)$ and replace $\|f-d\sigma_\ell\|_T$ by $h\|\nabla f\|_T$ in \eqref{estimatorkn}. Then the corresponding error estimator is $\widetilde{\mathcal{E}}_\ell:=\sum_{T\in\Tl}\widetilde{\mathcal{E}}_\ell(T)$, where
\begin{equation*}
    \begin{aligned}
&\widetilde{\CE}_{\ell}(T)=h_T^2\|\delta\CC\sigma_\ell\|^2_T+h_T\|\lr{\tr\star\CC\sigma_\ell}\|^2_{\partial T}+h_T^2\|\nabla f\|^2_T\\
&\quad+h_T^2\|\CC\sigma_\ell-\delta u_\ell\|_T^2+h_T\|\lr{\tr\star u_\ell}\|_{\partial T}^2.
\end{aligned}
\end{equation*}
Replacing $\mathcal{E}_\ell$ with $\widetilde{\mathcal{E}}_\ell$ in \textsf{AMFEM} yields a modified adaptive algorithm (denoted by \textsf{AMFEM--M}).
Due to the factor $h_T$ in each term of $\widetilde{\mathcal{E}}_\ell(T)$, it is readily checked that
\begin{equation}\label{est2}
\widetilde{\CE}_{\ell+1}\leq\zeta\widetilde{\CE}_{\ell}+C_{\text{re}}E_{\ell},
\end{equation}
where $0<\zeta<1$, $C_{\text{re}}>0$ are described in Lemma \ref{continuity}.
In addition, the reliability
\begin{align}\label{ctsbd2}
    \|\sigma-\sigma_{\ell}\|_{V_\CC}^2+\|u-u_{\ell}\|_V^2\lesssim\mathcal{E}_\ell\lesssim\widetilde{\CE}_{\ell}
\end{align}
follows from \eqref{ctsbd} and the elementary inequality $$\|f-d\sigma_\ell\|_T=\|f-Q_{\mathcal{T}_\ell}f\|_T\lesssim h\|\nabla f\|_T.$$
Using \eqref{est2}, \eqref{ctsbd2} and the same argument in the proof of  Theorem \ref{contraction}, we obtain the contraction of \textsf{AMFEM--M}, i.e., \begin{equation*}
\widetilde{\CE}_{\ell+m}\leq C_{\text{conv}}\gamma^m\widetilde{\CE}_\ell,\quad\text{for all }\ell\geq0,~m\geq1,
\end{equation*}
where $\gamma\in(0,1)$, ${C}_{\text{conv}}>0$  are given in Theorem \ref{contraction}.
However, $\widetilde{\mathcal{E}}_\ell$ is not shown to be a lower bound of $\|\sigma-\sigma_\ell\|^2_{V_\CC}+\|u-u_\ell\|^2$ and we will not pursue the quasi-optimality of \textsf{AMFEM--M}.
}

Now we present another modified adaptive algorithm for \eqref{nGHL} with $f\in L^2\Lambda^n(\Omega)$. The key ingredient is the nonlinear approximation algorithm (denoted by \textsf{APPROX}) in \cite{BD2004,BDD2004} for  pre-processing $f$, see Corollary 5.4 in \cite{BD2004}. Assume that
$$|f|_{\mathcal{A}_s}:=\sup_{N>0}\left(N^s\min_{\mathcal{T}\in\mathbb{P}, \#\mathcal{T}-\#\mathcal{T}_0\leq N}\|f-Q_\mathcal{T}f\|\right)<\infty,$$
where $\mathbb{P}$ is the set of subtriangulations (possibly with hanging nodes) of $\CT_0$ generated by NVB.
Given $\texttt{tol}>0$, the routine $\textsf{APPROX}$  outputs $\widetilde{\mathcal{T}}_0=\textsf{APPROX}(\mathcal{T}_0,f,\texttt{tol})\in\mathbb{P}$ by NVB, which is a refinement of $\mathcal{T}_0$ and realizes the class optimality condition, namely,
\begin{subequations}\label{approxf}
\begin{align}
    &\|f-Q_{\widetilde{\mathcal{T}}_0}f\|\leq\texttt{tol},\label{approxfa}\\
    &\#\widetilde{\mathcal{T}}_0-\#\mathcal{T}_0\lesssim|f|_{\mathcal{A}_s}^\frac{1}{s}\texttt{tol}^{-\frac{1}{s}}.\label{approxfb}
\end{align}
\end{subequations}

The routine \textsf{APPROX} has been incorporated into the adaptive algorithms in e.g., \cite{CarPark2015,CarHella2017,CDR2019} because the $H(\divg)$-flux error estimators therein does not undergo the standard reduction property in Lemma \ref{continuity}. Nonlinear approximation routines for optimality of AFEMs are also used in e.g., \cite{BDD2004,Stevenson2007,CHX2009,CDN2012}.

Carstensen and Rabus \cite{CarHella2017} proposed a unified framework of optimality of AFEMs based on axioms of adaptivity with separate marking strategy, which might be useful for treating the general $f\in L^2\Lambda^n(\Omega)\backslash \mathcal{P}_r\Lambda^n(\mathcal{T}_0)$ in \eqref{nGHL} but is beyond the scope of this paper. Readers are also referred to, e.g., \cite{BM2008,CDN2012,CarPark2015} for other AFEMs with separate markings.
To be self-contained, {we shall follow the idea of \cite{CHX2009} and discuss an alternative adaptive approach to solving \eqref{nGHL}}. In fact, \textsf{AMFEM} could be initialized using a finer initial mesh $\CT_1$ and then applied to the modified problem:
Find $(\bar{\sigma},\bar{u})\in V^{n-1}\times V^n$, such that
\begin{equation}\label{mnGHL}
\begin{aligned}
\ab{\CC\bar{\sigma}}{\tau}-\ab{d\tau}{\bar{u}}&=0,\quad&&\tau\in V^{n-1},\\
\ab{d\bar{\sigma}}{v}&=\ab{Q_{\CT_1}f}{v},&& v\in V^n.
\end{aligned}
\end{equation}
For instance, $\CT_1$ could be the conforming completion of $\widetilde{\CT}_0.$
The resulting modified adaptive algorithm for \eqref{nGHL} is summarized as follows.

\begin{algorithm}
[\textsf{AMFEM--APPROX}] Input an initial mesh $\mathcal{T}_{0}$, a marking parameter $\theta\in(0,1)$, and an error tolerance $\texttt{tol}>0$.
\begin{itemize}
\item[]\textbf{Step 1:} Compute $\widetilde{\mathcal{T}}_0=\textsf{APPROX}(\mathcal{T}_0,f,\texttt{tol})$ and let $\mathcal{M}_0=\CT_0\backslash\widetilde{\mathcal{T}}_0.$ Then remove the hanging nodes in $\widetilde{\mathcal{T}}_0$ by \textsf{COMPLETE} to obtain $\CT_1$.
\item[]\textbf{Step 2:} Apply \textsf{AMFEM} to \eqref{mnGHL} using $\theta$ as the marking parameter  and $\CT_1$ obtained in \textbf{Step 1} as the initial mesh. The counter $\ell$ in \textsf{AMFEM} starts with $\ell=1.$
If $\sqrt{\mathcal{E}_\ell}\leq\texttt{tol}$ in the subroutine  \textsf{ESTIMATE} of \textsf{AMFEM}, then return  $(\sigma_L,u_L):=(\sigma_\ell,u_\ell)$, $\mathcal{T}_L:=\Tl,$ $L:=\ell$.
\end{itemize}
\end{algorithm}
If $k=n$, then $\osc_\ell(f)=0$ by the definition in Theorem \ref{eff}. Therefore, $|(\sigma,u,f)|_{\mathbb{A}_s}$ is independent of $f$, and we can use $|(\sigma,u)|_{\mathbb{A}_s}$ as an abbreviation for $|(\sigma,u,f)|_{\mathbb{A}_s}$ throughout the rest of the paper.
The rate of convergence of \textsf{AMFEM--APPROX} is described in the next corollary.
\begin{corollary}\label{qopt2}
Let $(\sigma_L,u_L)$ and $\mathcal{T}_L$ be the outputs of  \textsf{AMFEM--APPROX}. Assume $|(\bar{\sigma},\bar{u})|_{\mathbb{A}_{s}}<\infty$, $|f|_{\mathcal{A}_s}<\infty$ and $0<\theta<\theta_{*}$. In addition, assume \eqref{markcond}, \eqref{card}, and the conditions in Theorem \ref{eff} hold. Then there exists a constant $\overline{C}_{\emph{opt}}$ depending  only on $\theta, \theta_*, \CT_0, C_\CC, \Omega,$ and $s, \gamma, C_{\emph{conv}}$ such that
\begin{equation*}
\|\sigma-{\sigma}_L\|_{V_\CC}+\|u-u_L\|\leq \overline{C}_{\emph{opt}}\big(|(\bar{\sigma},\bar{u})|_{\mathbb{A}_s}+|f|_{\mathcal{A}_s}\big)\big(
\#\CT_L-\#\CT_0\big)^{-s}.
\end{equation*}
\end{corollary}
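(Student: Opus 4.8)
The plan is to split the total error by the triangle inequality into a \emph{data-perturbation} part, comparing $(\sigma,u)$ with the solution $(\bar\sigma,\bar u)$ of the modified problem \eqref{mnGHL}, and a \emph{discretization} part comparing $(\bar\sigma,\bar u)$ with the computed $(\sigma_L,u_L)$; to bound each part by $\texttt{tol}$; and finally to convert $\texttt{tol}$ into the asserted rate by counting degrees of freedom.

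First I would bound the data-perturbation part. Subtracting the weak forms of \eqref{nGHL} and \eqref{mnGHL} shows that $(\sigma-\bar\sigma,u-\bar u)$ satisfies the generalized Hodge Laplacian with right-hand side $f-Q_{\CT_1}f$ in the second equation and $0$ in the first, so that $B(\sigma-\bar\sigma,u-\bar u;\tau,v)=\ab{f-Q_{\CT_1}f}{v}$. The continuous inf-sup condition of Theorem \ref{infsup} then gives
\begin{equation*}
\|\sigma-\bar\sigma\|_{V_\CC}+\|u-\bar u\|_V\le C_{\text{infsup}}^{-1}\sup_{\tau\in V^{n-1},\,v\in V^{n}}\frac{\ab{f-Q_{\CT_1}f}{v}}{\|\tau\|_{V_\CC}+\|v\|_V}\le C_{\text{infsup}}^{-1}\|f-Q_{\CT_1}f\|.
\end{equation*}
Since $\CT_1$ refines $\widetilde{\CT}_0=\textsf{APPROX}(\CT_0,f,\texttt{tol})$, the projection $Q_{\CT_1}$ is at least as accurate as $Q_{\widetilde{\CT}_0}$, so \eqref{approxfa} yields $\|f-Q_{\CT_1}f\|\le\|f-Q_{\widetilde{\CT}_0}f\|\le\texttt{tol}$. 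For $k=n$ the $V$-norm of an $n$-form equals its $L^2$-norm, so this also controls $\|u-\bar u\|$. For the discretization part, the stopping criterion $\sqrt{\CE_L}\le\texttt{tol}$ and the continuous reliability \eqref{ctsbd} applied to \eqref{mnGHL} give $\|\bar\sigma-\sigma_L\|_{V_\CC}^2+\|\bar u-u_L\|_V^2\le C_{\text{up}}\CE_L\le C_{\text{up}}\texttt{tol}^2$. Combining the two parts yields $\|\sigma-\sigma_L\|_{V_\CC}+\|u-u_L\|\lesssim\texttt{tol}$.

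It then remains to show $\#\CT_L-\#\CT_0=(\#\CT_L-\#\CT_1)+(\#\CT_1-\#\CT_0)\lesssim\big(|(\bar\sigma,\bar u)|_{\mathbb{A}_s}+|f|_{\mathcal{A}_s}\big)^{1/s}\texttt{tol}^{-1/s}$; inverting this relation and inserting it into the error bound proves the corollary. The second summand is controlled by the \textsf{APPROX} estimate \eqref{approxfb}, together with the fact that the conforming completion producing $\CT_1$ from $\widetilde{\CT}_0$ adds at most a constant multiple of the newly created elements, so $\#\CT_1-\#\CT_0\lesssim|f|_{\mathcal{A}_s}^{1/s}\texttt{tol}^{-1/s}$. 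For the first summand I would invoke the quasi-optimality machinery of Theorem \ref{qopt} for \eqref{mnGHL} started from $\CT_1$: the D\"orfler marking with minimal cardinality \eqref{markcond} (using the efficiency of Theorem \ref{eff}) furnishes the per-step bound $\#\mathcal{M}_j\lesssim|(\bar\sigma,\bar u)|_{\mathbb{A}_s}^{1/s}\CE_j^{-1/(2s)}$, while the estimator contraction of Theorem \ref{contraction} gives $\CE_j\gtrsim\gamma^{-(L-1-j)}\CE_{L-1}$. Because the algorithm did not stop at level $L-1$ we have $\CE_{L-1}>\texttt{tol}^2$, so summing the resulting geometric series and applying the NVB cardinality estimate \eqref{card} gives $\#\CT_L-\#\CT_1\lesssim\sum_{j=1}^{L-1}\#\mathcal{M}_j\lesssim|(\bar\sigma,\bar u)|_{\mathbb{A}_s}^{1/s}\texttt{tol}^{-1/s}$.

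The main obstacle is precisely this final degrees-of-freedom accounting, where two points demand care. First, one must work at the \emph{penultimate} level $L-1$ (through $\CE_{L-1}>\texttt{tol}^2$) rather than at $L$, so that the geometric summation closes and the completion cost of the very last refinement is absorbed directly by \eqref{card} instead of by an unavailable single-step cardinality bound. Second, the approximation class produced by running \textsf{AMFEM} from $\CT_1$ is a priori measured relative to $\CT_1$, and must be reconciled with the class relative to $\CT_0$ appearing in the statement; this is handled by the standard mesh-overlay comparison, which shows that refining the initial mesh from $\CT_0$ to $\CT_1$ alters the approximation seminorm only by a constant factor. Once these are in place, inverting the degrees-of-freedom bound and combining it with $\|\sigma-\sigma_L\|_{V_\CC}+\|u-u_L\|\lesssim\texttt{tol}$ completes the proof.
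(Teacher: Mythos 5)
Your proposal is correct and follows essentially the same route as the paper: the triangle-inequality splitting into data-perturbation and discretization parts each bounded by \texttt{tol} via continuous stability, \eqref{approxfa}, and \eqref{ctsbd}, followed by the degrees-of-freedom count combining \eqref{approxfb}, \eqref{card}, the per-step minimal-cardinality marking bound, the contraction of Theorem \ref{contraction}, and $\sqrt{\CE_{L-1}}>\texttt{tol}$ to sum the geometric series. The only differences are presentational: the paper folds $\#\CT_1-\#\CT_0$ into a single application of \eqref{card} via $\mathcal{M}_0=\CT_0\backslash\widetilde{\CT}_0$ rather than splitting at $\CT_1$, and it delegates to Equation (5.9) of \cite{FFP2014} the overlay-type reconciliation of the approximation class that you spell out explicitly.
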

\begin{proof}
It follows from the continuous stability of \eqref{nGHL},  $\|f-Q_{\CT_1}f\|\leq\|f-Q_{\widetilde{\CT}_0}f\|$, and \eqref{approxfa} that
\begin{equation}\label{stab}
    \|\sigma-\bar{\sigma}\|_{V_\CC}+\|u-\bar{u}\|\lesssim\|f-Q_{\CT_1}f\|\leq\texttt{tol}.
\end{equation}
The continuous upper bound \eqref{ctsbd} ensures
\begin{equation}\label{tildebound}
    \|\bar{\sigma}-{\sigma}_L\|_{V_\CC}+\|\bar{u}-{u}_L\|\lesssim\sqrt{\mathcal{E}_L}\leq\texttt{tol}.
\end{equation}
Combining \eqref{stab}, \eqref{tildebound} with the triangle inequality yields
\begin{equation}\label{epsibound}
    \|{\sigma}-{\sigma}_L\|_{V_\CC}+\|{u}-{u}_L\|\lesssim\texttt{tol}.
\end{equation}

Now we assume $L\ge2$ otherwise the theorem is automatically true. The subroutine \textsf{AMFEM} in \textsf{AMFEM--APPROX} returns a sequence of estimators $\{\mathcal{E}_j\}^L_{j=1}$, marked sets $\{\mathcal{M}_j\}^{L-1}_{j=1}$, and meshes $\{\mathcal{T}_j\}_{j=2}^L$. A combination of \eqref{card},  $\#\mathcal{M}_0=\#\widetilde{\CT}_0-\#\CT_0$, and \eqref{approxfb} shows that
\begin{equation}\label{card2}
\#\CT_L-\#\CT_0\lesssim|f|^{\frac{1}{s}}_{\mathcal{A}_s}\texttt{tol}^{-\frac{1}{s}}+\sum_{j=1}^{L-1}\mathcal{M}_j.
\end{equation}
Since $\mathcal{E}_j$ is computed using the data $Q_{\CT_1}f\in\mathcal{P}_r\Lambda^n(\CT_1)$, the contraction in Theorem \ref{contraction} can be used to analyze the subroutine \textsf{AMFEM} based on the initial mesh $\CT_1$.
It follows from the cardinality estimate \eqref{card2},
Equation (5.9) of \cite{FFP2014}, $\sqrt{\mathcal{E}_{L-1}}>\texttt{tol}$ and $\CE_{L-1}\leq C_{\text{conv}}\gamma^{L-1-j}\CE_j$ with $j\leq L-1$ by Theorem \ref{contraction} that
\begin{equation}\label{card3}
\begin{aligned}
    &\#\CT_L-\#\CT_0\lesssim|f|^{\frac{1}{s}}_{\mathcal{A}_s}\texttt{tol}^{-\frac{1}{s}}+|(\bar{\sigma},\bar{u})|^{\frac{1}{s}}_{\mathbb{A}_s}\sum_{j=1}^{L-1}\sqrt{\mathcal{E}_j}^{-\frac{1}{s}}\\
    &\quad\leq|f|^{\frac{1}{s}}_{\mathcal{A}_s}\texttt{tol}^{-\frac{1}{s}}+|(\bar{\sigma},\bar{u})|^{\frac{1}{s}}_{\mathbb{A}_s}C_{\text{conv}}^\frac{1}{2s}\mathcal{E}_{L-1}^{-\frac{1}{2s}}\sum_{j=1}^{L-1}\gamma^{\frac{L-j-1}{2s}}\\
    &\quad\lesssim C_s\big(|f|^{\frac{1}{s}}_{\mathcal{A}_s}+|(\bar{\sigma},\bar{u})|^{\frac{1}{s}}_{\mathbb{A}_s}\big)\texttt{tol}^{-\frac{1}{s}},
\end{aligned}
\end{equation}
where $C_s=1+C_{\text{conv}}^\frac{1}{2s}(1-\gamma^{\frac{1}{2s}})^{-1}$. We conclude the proof from \eqref{epsibound} and \eqref{card3}.
\end{proof}
The rate of convergence in Corollary \ref{qopt2} relies on the membership $(\bar{\sigma},\bar{u})\in\mathbb{A}_s$ while $({\sigma},{u})\in\mathbb{A}_s$ is considered as the standard optimality condition in AFEM literature. It is well-known that the approximation class of Lagrange elements is closely related to Besov spaces, see, e.g., \cite{BDDP2002,Gaspoz2014}. However, practical criterion for determining the membership of approximation classes for mixed methods is not known in general.

On the other hand, the proof of optimality of the AMFEM for Poisson's equation in \cite{CarHella2017} requires that $(\sigma,u)\in\mathbb{A}_s$, that $\CT_0$ is sufficiently fine, and an extra small enough user-specified parameter for which the upper bound is difficult to compute.

\section{Applications}\label{secex}
In this section, we present several important applications of the results obtained in Sections \ref{secEstimate} and \ref{secqo}.
\subsection{Hodge Laplace equation}\label{subsecPoi}
For $1\leq i\leq n,$ let $$e_i=(-1)^{i+1}dx^1\wedge\cdots\wedge \widehat{dx^i}\wedge\cdots\wedge dx^n,$$
where $\widehat{dx^i}$ means that $dx^i$ is suppressed. There is a correspondence $j$ between $(n-1)$-forms and $\mathbb{R}^n$-valued functions:
\begin{align*}
\begin{CD}
    \sum_{i=1}^n\tau_ie_i@>j>>(\tau_1,\tau_2,\ldots,\tau_n).
\end{CD}
\end{align*}
On the other hand, an $n$-form can be identified with a scalar-valued function by \begin{align*}
    \begin{CD}
        vdx^1\wedge dx^2\wedge\cdots\wedge dx^n@>h>>v.
    \end{CD}
\end{align*}
Using the bijections $j, h$ and the definition \eqref{derivative}, $d^{n-1}$ is identified with the divergence operator $\divg$, the adjoint $\delta^n$ becomes the negative gradient $-\nabla$.

Given a vector space $\VB$, let $L^2(\Omega;\VB)$ denote the space of $\VB$-valued $L^2$-functions on $\Omega$. $V^{n-1}$ is isometric to $H(\divg;\Omega)=\{\tau\in L^2(\Omega;\mathbb{R}^n): \divg\tau\in L^2(\Omega)\}$ via $j$ and $V^n$ is isometric to $L^2(\Omega)$ via $h$. The Hodge Laplace equation \eqref{compactHL} with index $k=n$ reads
\begin{equation}\label{mixedPoisson}
    \begin{aligned}
        \sigma&=-\nabla u&&\text{ in }\Omega,\\ \divg\sigma&=f&&\text{ in }\Omega,\\
        u&=0&&\text{ on }\partial\Omega.
    \end{aligned}
\end{equation}
Since $\divg: H(\divg;\Omega)\rightarrow L^2(\Omega)$ is surjective, $\mathfrak{H}^n=\{0\}$ always vanishes on bounded Lipschitz domain $\Omega$.

Again using $j$ and $h,$
$\mathcal{P}_r\Lambda^n(\Tl)=\mathcal{P}^-_{r+1}\Lambda^n(\Tl)$ is identified with the space of piecewise polynomials of degree $\leq r$ without any continuity constraint,
\begin{align*}
    \mathcal{P}_{r+1}\Lambda^{n-1}(\Tl)&=\{\tau\in H(\divg;\Omega): \tau|_T\in\mathcal{P}_{r+1}(T;\mathbb{R}^n)\text{ for all }T\in\Tl\},\\
    \mathcal{P}^-_{r+1}\Lambda^{n-1}(\Tl)&=\{\tau\in H(\divg;\Omega): \tau|_T\in\mathcal{P}_r(T;\mathbb{R}^n)+x\mathcal{P}_r(T) \text{ for all }T\in\Tl\},
\end{align*}
where $\mathcal{P}_p(T;\mathbb{R}^n)$ is the space of $\mathbb{R}^n$-valued polynomials of degree $\leq p.$
The mixed method \eqref{DGHL}  with index $k=n$ and  $V_\ell^{n-1}\times V_\ell^n=\mathcal{P}^-_{r+1}\Lambda^{n-1}(\Tl)\times\mathcal{P}_r\Lambda^n(\Tl)$ or $V_\ell^{n-1}\times V_\ell^n=\mathcal{P}_{r+1}\Lambda^{n-1}(\Tl)\times\mathcal{P}_r\Lambda^n(\Tl)$ is indeed the RT or BDM element method, respectively. The $V^{n-1}$-norm is the $H(\divg)$-norm and $V^n$-norm is simply the $L^2$-norm. Under several assumptions, the rate of convergence of $(\sigma_\ell,u_\ell)\rightarrow(-\nabla u,u)$ in the $H(\divg)\times L^2$-norm is given by Corollary \ref{qopt2} or Theorem \ref{qopt}:
\begin{align*}
    \|\sigma-\sigma_\ell\|_{H(\divg)}+\|u-u_\ell\|\leq\overline{C}_{\text{opt}}\big(|(\bar{\sigma},\bar{u})|_{\mathbb{A}_s}+|f|_{\mathcal{A}_s}\big)\big(\#\CT_\ell-\#\CT_0\big)^{-s}.
\end{align*}

The identification of $V^k$ and $d^k, \delta^{k+1}$ with $k\leq n-2$ depends on the dimension of $\mathbb{R}^n.$ For example, the $(n-2)$-forms in $\mathbb{R}^2$ are automatically scalar-valued functions; the $(n-2)$-forms in $\mathbb{R}^3$ are realized by
\begin{equation*}
\begin{CD}
    v_1dx^1+v_2dx^2+v_3dx^3@>{s}>>(v_1,v_2,v_3).
\end{CD}
\end{equation*}
Using $s$ and $j,$ we have $d^{n-2}=\curl, \delta^{n-1}=\rot,$ where
\begin{equation*}
    \curl v=\left\{\begin{aligned}
    (\frac{\partial v}{\partial x_2 },-\frac{\partial v}{\partial x_1})\  (\text{ if } n=2)\\
    \nabla\times v\ (\text{ if }n=3)
\end{aligned}\right\},\quad \rot\tau=\left\{\begin{aligned}
    \frac{\partial\tau_2}{\partial x_1}-\frac{\partial\tau_1}{\partial x_2}\ (\text{ if }n=2)\\
    \nabla\times \tau\ (\text{ if }n=3)
\end{aligned}\right\}.
\end{equation*}
The space $ V^{n-2}$ is identified with
\begin{equation*}
    H(\curl;\Omega)=\left\{\begin{aligned}
    &H^1(\Omega),&&\quad n=2,\\
    &\{\phi\in L^2(\Omega;\mathbb{R}^3): \curl\phi\in L^2(\Omega;\mathbb{R}^3)\},&&\quad n=3.
\end{aligned}\right.
\end{equation*}
When $n=2$ or $3$, the $L^2$-de Rham complex \eqref{deRham} reduces to the well-known complexes
\begin{align*}
\begin{CD}
    H^1(\Omega)=H(\curl;\Omega)\xrightarrow{\curl}H(\divg;\Omega)\xrightarrow{\divg}L^2(\Omega)\text{ in }\mathbb{R}^2,\\
    H^1(\Omega)\xrightarrow{\nabla}H(\curl;\Omega)\xrightarrow{\curl}H(\divg;\Omega)\xrightarrow{\divg}L^2(\Omega)\text{ in }\mathbb{R}^3.
\end{CD}
\end{align*}
Given a face $S$ in $\Tl,$  $\tr|_S\star\tau=\tau_t$ is the tangential trace of $\tau$, where
\begin{align*}
    \tau_t=\tau\cdot t\text{ when }n=2,\quad\tau_t=\tau\times\nu\text{ when }n=3.
\end{align*}
Here $t$ and $\nu$ are unit tangent and normal to the face $S$, repectively.
The error indicator $\CE_\ell$ for Poisson's equation reads
\begin{align*}
    &\CE_{\ell}(T)=h_T^2\|\sigma_\ell+\nabla u_\ell\|_T^2+h_T\|\lr{u_\ell}\|^2_{\partial T}\\
    &\qquad+h_T^2\|\rot\sigma_\ell\|_T^2+h_T\|\lr{\sigma_{\ell,t}}\|^2_{\partial T}+\|f-\divg\sigma_\ell\|_T^2,
\end{align*}
which controls the error $\|\sigma-\sigma_\ell\|^2_{H(\divg)}+\|u-u_\ell\|^2$. Readers are referred to \cite{BV1996,CC1997} for other error estimators controlling the $H(\divg)\times L^2$-error.

We briefly describe other Hodge Laplace equations in $\mathbb{R}^2$ and $\mathbb{R}^3$.
The Hodge Laplacian problem \eqref{standardHL} with index $k\leq n-1$ and $n=2, 3$ reduces to the vector Laplacian problem:
\begin{equation}\label{interHL}
    \begin{aligned}
        \Delta u&=f\text{ in }\Omega,\\
        u\cdot t=0,~ \divg u&=0\text{ on }\partial\Omega\text{ when }k=2, n=3\text{ or }k=1, n=2,\\
        u\cdot\nu=0,~ \curl u\cdot t&=0\text{ on }\partial\Omega\text{ when }k=1, n=3.
    \end{aligned}
\end{equation}
In the case that $\mathfrak{H}^k=\{0\}$, Theorem \ref{qopt} confirms the quasi-optimal convergence rate of \textsf{AMFEM} for solving \eqref{interHL} based on the mixed formulation \eqref{GHL} with $\CC=\text{id}$ and finite element pairs \eqref{pairs}. In any dimension,  $\mathcal{P}_{r}^-\Lambda^0(\Tl)=\mathcal{P}_{r}\Lambda^0(\Tl)$ is the nodal finite element space of degree at most $r$. In $\mathbb{R}^3$,  $\mathcal{P}_{r+1}\Lambda^1(\Tl)$ and $\mathcal{P}^-_{r+1}\Lambda^1(\Tl)$ are called N\'ed\'elec edge finite element spaces \cite{Nedelec1980,Nedelec1986} in the classical context.
\subsection{Pseudostress-velocity formulation of the Stokes equation}
Given $\bm{f}\in L^2(\Omega;\mathbb{R}^n)$, the Stokes problem is to find $\bm{u}$ and $p$ with $\bm{u}|_{\partial\Omega}=0$, $\int_\Omega pdx=0$ satisfying
\begin{align*}
     -\Delta\bm{u}+\nabla p&=\bm{f}\text{ in }\Omega,\\
     \divg\bm{u}&=0\text{ in }\Omega,
\end{align*}
The pseudostress is $\bm{\sigma}=-\nabla\bm{u}+p\bm{I}_n$, where $\nabla$ denotes the row-wise gradient and $\bm{I}_n$ is the $n\times n$ identity matrix. The operator $\CC$ is given by
\begin{equation}\label{CC}
    \CC\bm{\sigma}:=\bm{\sigma}-\frac{1}{n}\text{Tr}(\bm{\sigma})\bm{I}_n,
\end{equation} where $\text{Tr}$ is the trace operator for square matrices. It is readily checked that $\CC$ is continuous and self-adjoint. Let $\Divg$ is the row-wise divergence for matrix-valued functions.
The Stokes problem is equivalent to the  pseudostress-velocity formulation (see, e.g., \cite{CaiWang2007,Cai2010b})
\begin{align*}
     \CC\bm{\sigma}&=-\nabla \bm{u}\text{ in }\Omega,\\ \Divg\bm{\sigma}&=\bm{f}\text{ in }\Omega,
\end{align*}
where $\bm{u}|_{\partial\Omega}=0$ and $\bm{\sigma}$ satisfies the compatibility condition $\int_\Omega\text{Tr}(\bm{\sigma})dx=0$. Let
$$\mathbf{V}^{n-1}=\{\bm{\tau}\in L^2(\Omega;\mathbb{R}^{n\times n}): \Divg\bm{\tau}\in L^2(\Omega;\mathbb{R}^n), \int_\Omega\text{Tr}(\bm{\tau})dx=0\}$$
and $\mathbf{V}^n=L^2(\Omega;\mathbb{R}^n)$.
The mixed variational formulation seeks find $\bm{\sigma}\in\mathbf{V}^{n-1}$ and $\bm{u}\in\mathbf{V}^n$ satisfying
\begin{equation}\label{STOKES}
    \begin{aligned}
     \ab{\CC\bm{\sigma}}{\bm{\tau}}-\ab{\Divg\bm{\tau}}{\bm{u}}&=0,\quad\bm{\tau}\in \mathbf{V}^{n-1},\\
     \ab{\Divg\bm{\sigma}}{\bm{v}}&=\ab{\bm{f}}{\bm{v}},\quad\bm{v}\in\mathbf{V}^n.
\end{aligned}
\end{equation}
It has been shown in \cite{ADG1984} that
$$\|\bm{\sigma}\|\lesssim\|\CC^\frac{1}{2}\bm{\sigma}\|+\|\Divg\bm{\sigma}\|_{H^{-1}(\Omega)}.$$
Combining it with $\|\Divg\bm{\sigma}\|_{H^{-1}(\Omega)}\leq\|\Divg\bm{\sigma}\|$ and the continuity of $\CC$, the assumption \eqref{equivCC} is verified.

For a vector- or matrix-valued function $\bm{v}$, let $\bm{v}_i$ denote the $i$-th entry or $i$-th row of $\bm{v}$, respectively. Let $\mathbf{V}_\ell^n=\{\bm{v}\in L^2(\Omega;\mathbb{R}^n): \bm{v}_i\in V_\ell^n, i=1,\ldots,n\}$ and $\mathbf{V}_\ell^{n-1}=\{\bm{\tau}\in L^2(\Omega;\mathbb{R}^{n\times n}): \bm{\tau}_i\in V_\ell^{n-1}, i=1,\ldots,n, \int_\Omega\text{Tr}(\bm{\tau})dx=0\},$ where $V_\ell^{n}$ and $V_\ell^{n-1}$ are given in  Subsection \ref{subsecPoi}. The mixed method for \eqref{STOKES} seeks
$\bm{\sigma}_\ell\in\mathbf{V}_\ell^{n-1}$ and $\bm{u}_\ell\in\mathbf{V}_\ell^n$ satisfying
\begin{equation}\label{DSTOKES}
    \begin{aligned}
     \ab{\CC\bm{\sigma}_\ell}{\bm{\tau}}-\ab{\Divg\bm{\tau}}{\bm{u}_\ell}&=0,\quad\bm{\tau}\in \mathbf{V}_\ell^{n-1},\\
     \ab{\Divg\bm{\sigma}_\ell}{\bm{v}}&=\ab{\bm{f}}{\bm{v}}\quad \bm{v}\in\mathbf{V}_\ell^n.
\end{aligned}
\end{equation}

Let $ L^2\Lambda^k(\Omega;\mathbb{R}^n)$ denote the space of all $\mathbb{R}^n$-valued $k$-forms $\omega$, namely,
$$\omega=\sum_{1\leq \alpha_{1}<\cdots<\alpha_{k}\leq n}\omega_{\alpha}dx^{\alpha_{1}}\wedge\cdots\wedge dx^{\alpha_{k}},$$
where each $\omega_\alpha\in L^{2}({\Omega;\mathbb{R}^n})$. The theory of de Rham complex in subsection \ref{secdeRham} can be directly extended to the vector-valued case.
There is a natural correspondence $\mathbf{j}$ between $\mathbb{R}^n$-valued $(n-1)$-forms and ${n\times n}$ matrix-valued functions:
\begin{align*}
\begin{CD}
    \sum_{i=1}^n\tau_ie_i@>\mathbf{j}>>(\tau_1,\tau_2,\ldots,\tau_n)^T.
\end{CD}
\end{align*}
Let $\mathbf{d}^{n-2}: L^2\Lambda^{n-2}(\Omega;\mathbb{R}^n)\rightarrow L^2\Lambda^{n-1}(\Omega;\mathbb{R}^n)$ denote the exterior derivative for vector-valued forms, $\text{D}=\mathbf{j}\circ \mathbf{d}^{n-2}$ and
\begin{equation*}
    \mathbf{V}^{n-2}=\{\bm{v}\in L^2\Lambda^{n-2}(\Omega;\mathbb{R}^n): \text{D}\bm{v}\in \mathbf{V}^{n-1}\}.
\end{equation*}
As in the scalar case, it is readily checked that the following is a closed Hilbert complex:
\begin{equation}\label{Scomplex}
\begin{CD}
    \mathbf{V}^{n-2}@>\text{D}>>\mathbf{V}^{n-1}@>\Divg>>\mathbf{V}^n.
\end{CD}
\end{equation}
Let $\mathbf{V}_\ell^{n-2}=\{v\in\mathbf{V}^{n-2}: v_i\in V_\ell^{n-2}, i=1,\ldots,n\}.$ The discrete subcomplex reads
\begin{equation}\label{DScomplex}
\begin{CD}
    \mathbf{V}_\ell^{n-2}@>\text{D}>>\mathbf{V}_\ell^{n-1}@>\Divg>>\mathbf{V}_\ell^n.
\end{CD}
\end{equation}
The surjectivity of $\Divg: \mathbf{V}^{n-1}\rightarrow\mathbf{V}^n$ implies the $n$-th cohomology group $\mathfrak{H}^n$ vanishes. The regular decomposition in Theorem \ref{rd} can be applied to each row of test functions in $\mathbf{V}^{n}$ and $\mathbf{V}^{n-1}$.

To apply the theory in Sections \ref{secEstimate} and \ref{secqo}, it suffices to construct a local $V$-bounded cochain projection ${\Pi}_\ell$ from \eqref{Scomplex} to \eqref{DScomplex} as well as an $L^2$-bounded cochain projection $\bar{\Pi}_\ell$ from
\begin{equation*}
    \begin{CD}
        L^2\Lambda^{n-2}(\Omega;\mathbb{R}^n)@>\text{D}>>L^2(\Omega;\mathbb{R}^{n\times n})@>\Divg>>L^2(\Omega;\mathbb{R}^{n})
    \end{CD}
\end{equation*}to \eqref{DScomplex}; compare with Theorem \ref{cochainproj} and Corollary \ref{disrd}. In fact, $\Pi_\ell$ and $\bar{\Pi}_\ell$ can be contructed from  $\pi_\ell$ and $\bar{\pi}_\ell$, respectively. As described in subsection \ref{subsecPoi}, $\pi^{n-1}_\ell$ and  $\pi^n_\ell$ can be applied to functions in $H(\divg;\Omega)$ and $L^2(\Omega)$, respectively. Let $\bm{\pi}_\ell^{n-1}$ and $\bm{\pi}_\ell^n$ denote the row-wise version of $\pi_\ell^{n-1}$ and $\pi_\ell^n$, respectively. Let ${\Pi}_\ell^n=\bm{\pi}_\ell^n: \mathbf{V}^n\rightarrow\mathbf{V}_\ell^n$ and
${\Pi}_\ell^{n-1}: \mathbf{V}^{n-1}\rightarrow\mathbf{V}_\ell^{n-1}$ be defined by
$$\Pi_\ell^{n-1}\bm{\tau}=\bm{\pi}_\ell^{n-1}\bm{\tau}-\frac{\bm{I}_n}{n|\Omega|}\int_\Omega\text{Tr}(\bm{\pi}_\ell^{n-1}\bm{\tau})dx$$
so that ${\Pi}_\ell^{n-1}(\mathbf{V}^{n-1})=\mathbf{V}_\ell^{n-1}.$ Note that $L^2\Lambda^{n-2}(\Omega;\mathbb{R}^n)=[L^2\Lambda^{n-2}(\Omega)]^n$, the Cartesian product of $L^2\Lambda^{n-2}(\Omega)$ with $n$ copies.
We can take $\bm{\pi}_\ell^{n-2}: L^2\Lambda^{n-2}(\Omega;\mathbb{R}^n)$ $\rightarrow[V_\ell^{n-2}]^n$ to be the component-wise version of $\pi_\ell^{n-2}$. For $\bm{w}\in\mathbf{V}^{n-2}$, let
$$\Pi_\ell^{n-2}\bm{w}:=\bm{\pi}_\ell^{n-2}\bm{w}-\frac{\bm{\mu}}{n(n-1)|\Omega|}\int_\Omega\text{Tr}(D\bm{\pi}_\ell^{n-2}\bm{w})dx,$$
where $\bm{\mu}=(\kappa e_1,\kappa e_2,\ldots,\kappa e_n)^T$ with $\kappa$ given in \eqref{kappa}. For example,
\begin{equation*}
    \bm{\mu}=\begin{pmatrix}x_2\\-x_1\end{pmatrix}\text{ when }n=2,\quad \bm{\mu}=\frac{1}{2}\begin{pmatrix}x_2dx^3-x_3dx^2\\x_3dx^1-x_1dx^3\\x_1dx^2-x_2dx^1\end{pmatrix}\text{ when }n=3.
\end{equation*}
Using the formula $(d\kappa+\kappa d)e_i=(n-1)e_i$ (cf.~Theorem 3.1 in \cite{AFW2006}) and $de_i=0$ with $1\leq i\leq n$, we have $\mathbf{d}^{n-2}\bm{\mu}=(n-1)(e_1,e_2,\ldots,e_n)^\intercal$ and thus $$\text{D}\bm{\mu}=(n-1)\bm{I}_{n},\quad\int_\Omega\text{Tr}(\text{D}\Pi_\ell^{n-2}\bm{w})dx=0.$$ Combining it with $\kappa e_i\in\mathcal{P}_1^-\Lambda^{n-2}(\Tl)\subseteq V_\ell^{n-2}$, we have $\Pi_\ell^{n-2}\bm{w}\in\mathbf{V}_\ell^{n-2}$.
Note that $\Pi_\ell$ is simply obtained by subtracting a global constant or differential form from $\bm{\pi}_\ell$. Therefore $\Pi_\ell$ is a local $V$-bounded cochain projection satisfying the properties in Theorem \ref{cochainproj}. The $L^2$-bounded projection $\bar{\Pi}_\ell$ can be constructed in the same way using $\bar{\pi}_\ell.$

Let
\begin{equation*}
\begin{aligned}
    &\CE_{\ell}(T)=h_T^2\|\CC\bm{\sigma}_\ell+\nabla \bm{u}_\ell\|_T^2+h_T\|\lr{\bm{u}_\ell}\|^2_{\partial T}\\
    &\qquad+h_T^2\|\text{D}^*\CC\bm{\sigma}_\ell\|_T^2+h_T\|\lr{\tr\star\mathbf{j}^{-1}(\bm{\sigma}_{\ell})}\|^2_{\partial T}+\|f-\Div\bm{\sigma}_\ell\|_T^2,
\end{aligned}
\end{equation*}
where $\text{D}^*$ is the adjoint of $\text{D}$. In $\mathbb{R}^2$ or $\mathbb{R}^3$,
$\text{D}^*=\Rot$, where $\Rot$ is the row-wise version of $\rot;$ ${\tr\star\mathbf{j}^{-1}(\bm{\sigma}_{\ell})}=\bm{\sigma}_{\ell,t}$, the row-wise tangential trace of $\bm{\sigma}_\ell.$
It follows from Theorems \ref{disupper} and \ref{eff} that $\CE_\ell$ is reliable and efficient for controlling $\|\CC^\frac{1}{2}(\bm{\sigma}-\bm{\sigma}_\ell)\|^2+\|\Divg(\bm{\sigma}-\bm{\sigma}_\ell)\|^2+\|\bm{u}-\bm{u}_\ell\|^2$. It follows from Corollary \ref{qopt2} that the rate of convergence of \textsf{AMFEM--APPROX} for \eqref{STOKES} is
\begin{align*}
    &\|\CC^\frac{1}{2}(\bm{\sigma}-\bm{\sigma}_\ell)\|+\|\Divg(\bm{\sigma}-\bm{\sigma}_\ell)\|+\|\bm{u}-\bm{u}_\ell\|\\
    &\qquad\leq\overline{C}_{\text{opt}}\big(|(\bar{\bm{\sigma}},\bar{\bm{u}})|_{\mathbb{A}_s}+|\bm{f}|_{\mathcal{A}_s}\big)\big(\#\CT_\ell-\#\CT_0\big)^{-s}.
\end{align*}
If $\bm{f}\in\bm{V}_0^n$, the right hand side of the previous estimate could be replaced by $\overline{C}_{\text{opt}}|(\bm{\sigma},\bm{u})|_{\mathbb{A}_s}\big(\#\CT_\ell-\#\CT_0\big)^{-s}$.

\section*{acknowledgements}
The author would like to thank the two anonymous referees for their critical and constructive comments that significantly improves the quality of this paper.

\providecommand{\bysame}{\leavevmode\hbox to3em{\hrulefill}\thinspace}
\providecommand{\MR}{\relax\ifhmode\unskip\space\fi MR }
\providecommand{\MRhref}[2]{%
  \href{http://www.ams.org/mathscinet-getitem?mr=#1}{#2}
}
\providecommand{\href}[2]{#2}

\end{document}